\definecolor{backgroundcolor}{rgb}{0.9,0.8,0.8}
\numberwithin{equation}{section}
\newcommand{\vanish}[1]{}
\def\bbar#1{\setbox0=\hbox{$#1$}\dimen0=.2\ht0 \kern\dimen0 }
\newcommand{\defi}[1]{\textsf{#1}} 
\newenvironment{romanenum}{\hfill \begin{enumerate} }{\end{enumerate}}
\newenvironment{alphenum}{\hfill \begin{enumerate} }{\end{enumerate}}
\DeclareSymbolFont{cyrletters}{OT2}{wncyr}{m}{n}
\DeclareMathSymbol{\Sha}{\mathalpha}{cyrletters}{"58}
\newcommand\legendre[2]{\Bigl(\frac{#1}{#2}\Bigr) }   
\newcommand{\Aff}{{\mathbb A}}
  \newcommand{\FF}{{\mathbb F}}
\newcommand{\PP}{{\mathbb P}}
 \newcommand{\QQ}{{\mathbb Q}}
\newcommand{\RR}{{\mathbb R}}
\newcommand{\ZZ}{{\mathbb Z}}
\def\bbar#1{\setbox0=\hbox{$#1$}\dimen0=.2\ht0 \kern\dimen0 \overline{\kern-\dimen0 #1}}
\newcommand{\Qbar}{{\overline{\mathbb Q}}}
 \newcommand{\FFbar}{\overline{\FF}}
\newcommand{\xx}{{\defi x}}
\newcommand{\yy}{{\defi y}}
\newcommand{\calE}{{\mathcal E}} 
\newcommand{\calF}{{\mathcal F}}
\newcommand{\calG}{{\mathcal G}}
\newcommand{\calL}{{\mathcal L}}
\newcommand{\calN}{{\mathcal N}}
\newcommand{\calR}{{\mathcal R}}
\newcommand{\calS}{{\mathcal S}}
\newcommand{\calT}{{\mathcal T}}
\newcommand{\calX}{{\mathcal X}}
\newcommand{\OO}{{\mathcal O}}
\DeclareMathOperator{\Drop}{drop}
\DeclareMathOperator{\Frob}{Frob}
\DeclareMathOperator{\Aut}{Aut}
\DeclareMathOperator{\Gal}{Gal}
\DeclareMathOperator{\Spec}{Spec}
\DeclareMathOperator{\MC}{MC}
\newcommand{\tors}{{\operatorname{tors}}}
\newcommand{\GL}{\operatorname{GL}}
\newcommand{\SL}{\operatorname{SL}}
\newcommand{\PSL}{\operatorname{PSL}}
\newcommand{\PSp}{\operatorname{PSp}}
\newcommand{\PSU}{\operatorname{PSU}}
\newcommand{\SO}{\operatorname{SO}}
\newcommand{\Or}{\operatorname{O}}
\newcommand{\POmega}{\operatorname{P}\!\Omega}
\DeclareMathOperator{\spin}{sp}
\DeclareMathOperator{\disc}{disc}
\newcommand{\ang}[2]{\langle #1,#2\rangle}
\def\CC{\mathbb C}
\DeclareMathOperator{\I}{\operatorname{I}}
\DeclareMathOperator{\II}{\operatorname{II}}
\DeclareMathOperator{\III}{\operatorname{III}}
\DeclareMathOperator{\IV}{\operatorname{IV}}
\DeclareMathOperator{\Kod}{\operatorname{Kod}}
\newtheorem{theorem}{Theorem}[section]
\newtheorem{lemma}[theorem]{Lemma}
\newtheorem{corollary}[theorem]{Corollary}
\newtheorem{proposition}[theorem]{Proposition}
\theoremstyle{definition}
\newtheorem{conjecture}[theorem]{Conjecture}
\theoremstyle{remark}
\newtheorem{remark}[theorem]{Remark}
\definecolor{webcolor}{rgb}{0,0,1}
\definecolor{webbrown}{rgb}{.6,0,0}
\begin{document}

\title[]{The inverse Galois problem for orthogonal groups}

\subjclass[2010]{Primary 12F12; Secondary 14J27, 11G05, 11G40}

\author{David Zywina}
\address{Department of Mathematics, Cornell University, Ithaca, NY 14853, USA}
\email{zywina@math.cornell.edu}
\urladdr{http://www.math.cornell.edu/~zywina/}

\begin{abstract}
We prove many new cases of the Inverse Galois Problem for those simple groups arising from orthogonal groups over finite fields.  For example, we show that the finite simple groups $\Omega_{2n+1}(p)$ and $\POmega_{4n}^+(p)$ both occur as the Galois group of a Galois extension of the rationals for all integers $n\geq 2$ and all primes $p\geq 5$.  We obtain our representations by studying families of twists of elliptic curves and using some known cases of the Birch and Swinnerton-Dyer conjecture along with a big monodromy result of Hall.
\end{abstract}

\thanks{This material is based upon work supported by the National Science Foundation under agreement No. DMS-1128155.}

\maketitle

\section{Introduction}

The \defi{Inverse Galois Problem} asks whether every finite group $G$ is isomorphic to the Galois group of some Galois extension of $\QQ$.  This problem is extremely difficult, even in the special case of non-abelian simple groups which we now restrict our attention to.  Many special cases are known, including alternating groups and all but one of the sporadic simple groups.  Several families of simple groups of Lie type are known to occur as Galois groups of an extension of $\QQ$, but usually with congruences imposed on the cardinality of the fields.   See \cite{MR1711577} for background and many examples.   Moreover, one can ask whether there is a \defi{regular} Galois extension $K/\QQ(t)$ with Galois group isomorphic to $G$ (by regular, we mean that $\QQ$ is algebraically closed in $K$).   If such an extension $K/\QQ(t)$ exists, then Hilbert's irreducibility theorem implies that there are infinitely many Galois extensions of $\QQ$ with Galois group isomorphic to $G$, and likewise over every number field.

\subsection{The groups}

We first introduce the simple groups that we are interested in realizing as Galois groups; further background can be found in \cite{MR2562037}*{\S3.7} and \cite{Atlas}*{\S2.4}.	  

Take any prime $\ell\geq 5$.  An \defi{orthogonal space} over $\FF_\ell$ is a finite dimensional $\FF_\ell$-vector space $V$ equipped with a non-degenerate and symmetric bilinear pairing $\ang{\,}{\:}\colon V\times V \to \FF_\ell$.     A homomorphism of orthogonal spaces is an $\FF_\ell$-linear map which is compatible with the respective pairings.   The \defi{orthogonal group} of $V$, denoted by $\Or(V)$, is the group of automorphisms of $V$ as an orthogonal space.    The \defi{special orthogonal group} $\SO(V)$ is the index $2$ subgroup of $\Or(V)$ consisting of those elements with determinant $1$.     For each $v\in V$ with $\ang{v}{v}\neq 0$, we have a reflection $r_v \in \Or(V)$ defined by $x\mapsto x - 2 \ang{x}{v}/\ang{v}{v} \cdot v$.   The \defi{spinor norm} of $V$ is the homomorphism 
\[
\spin\colon \Or(V)\to \FF_\ell^\times/(\FF_\ell^\times)^2
\]
characterized by the property that it satisfies $\spin(r_v) = \ang{v}{v}\cdot (\FF_\ell^\times)^2$ for every $v\in V$ with $\ang{v}{v}\neq 0$. The \defi{discriminant} of $V$ is $\disc(V):=\spin(-I)$; it can also be defined as the coset in $\FF_\ell^\times/(\FF_\ell^\times)^2$ represented by $\det(\ang{e_i}{e_j})$ where $\{e_1,\ldots, e_n\}$ is any basis of $V$.

Now fix an integer $n\geq 5$, and take any orthogonal space $V$ over $\FF_\ell$ of dimension $n$.    We define $\Omega(V)$ to be the subgroup of $\SO(V)$  consisting of elements with trivial spinor norm.    Using that $\FF_\ell^\times/(\FF_\ell^\times)^2$ has order $2$, one can show that $\Omega(V)$ is an index $2$ subgroup of $\SO(V)$.   The group $\Omega(V)$ is perfect and its center is either $\{I\}$ or $\{\pm I\}$.     Denote by $\POmega(V)$ the quotient of $\Omega(V)$ by its center.    
   
Suppose that $n$ is odd.    The group $\Omega(V)$ has trivial center and is simple.  The isomorphism class of $\Or(V)$, and hence also $\Omega(V)$, depends only on $n$ and $\ell$.  Denote by $\Omega_n(\ell)$ an abstract group isomorphic to $\Omega(V)$ (other common notation for this group is $O_n(\ell)$ and $B_{(n-1)/2}(\ell)$).

 Suppose that $n$ is even.  Up to isomorphism, there are two orthogonal spaces of dimension $n$ over $\FF_\ell$ and they are distinguishable by their discriminants.  We say that $V$ is \defi{split} if $\disc(V) = (-1)^{n/2}(\FF_\ell^\times)^2$ and \defi{non-split} otherwise (note that $V$ is split if and only if it is an orthogonal sum of hyperbolic planes).  Denote by $\POmega^+_n(\ell)$ and $\POmega^-_n(\ell)$ an abstract group isomorphic to $\POmega(V)$ when $V$ is split or non-split, respectively.  The groups $\POmega_n^+(\ell)$ and $\POmega_n^-(\ell)$ are both simple.  (Other common notation for $\POmega_n^+(\ell)$ is $O_n^+(\ell)$ and $D_{n/2}(\ell)$.  Other common notation for $\POmega_n^-(\ell)$ is $O_n^-(\ell)$ and ${}^2\! D_{n/2}(\ell^2)$.)

\subsection{Main results}

\begin{theorem} \label{T:Omega total}
Take any integer $n\geq 5$ and prime $\ell\geq 5$.
\begin{romanenum}
\item \label{T:Omega total i}
If $n$ is odd, then the simple group $\Omega_{n}(\ell)$ occurs as the Galois group of a regular extension of $\QQ(t)$.
\item \label{T:Omega total ii}
If $n \equiv 0 \pmod{4}$ or $\ell \equiv 1 \pmod{4}$, then the simple group $\POmega^+_{n}(\ell)$ occurs as the Galois group of a regular extension of $\QQ(t)$.
\item \label{T:Omega total iii}
If $n \equiv 2 \pmod{4}$ and $\ell\equiv 3 \pmod{4}$, then the simple group $\POmega^-_{n}(\ell)$ occurs as the Galois group of a regular extension of $\QQ(t)$.
\item \label{T:Omega total iv}
If $n$ is even and $2$, $3$, $5$ or $7$ is not a square modulo $\ell$, then the groups $\POmega^{+}_n(\ell)$ and $\POmega^{-}_n(\ell)$ both occur as the Galois group of a regular extension of $\QQ(t)$.
\end{romanenum}
\end{theorem}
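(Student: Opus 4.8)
The plan is to realise each target group as a monodromy group over the $t$-line. For a suitable nonempty open $U\subseteq\PP^1_{\QQ}$ I will build a lisse $\FF_\ell$-sheaf $\mathcal F$ on $U$ whose geometric generic stalk is an $n$-dimensional orthogonal $\FF_\ell$-space $V$ of the prescribed isometry type, and I will compute its geometric monodromy group $\Gamma^{\mathrm g}:=\im\big(\pi_1(U_{\Qbar})\to\Or(V)\big)$ and its arithmetic monodromy group $\Gamma^{\mathrm a}:=\im\big(\pi_1(U)\to\Or(V)\big)$. Once $\Gamma^{\mathrm g}$ is pinned down, the subcover of the $\mathcal F$-cover of $U$ cut out by the kernels of the determinant and the spinor norm has monodromy group $\Omega(V)$; the last step is to check that --- for the right arithmetic choices --- this subcover is still a regular cover of a rational curve over $\QQ$, which (after quotienting by the centre of $\Omega(V)$ in the even-dimensional cases) yields a regular Galois extension of $\QQ(t)$ with Galois group $\Omega_n(\ell)$, $\POmega^+_n(\ell)$, or $\POmega^-_n(\ell)$.

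The sheaf $\mathcal F$ will come from a one-parameter family of quadratic twists of a fixed elliptic curve $E$ over a rational function field, with $E$ chosen to have a place of multiplicative reduction. Twisting $E$ by a polynomial one of whose coefficients is the parameter $t$ and passing to the associated family of elliptic surfaces, $\mathcal F$ is the interesting part of the mod-$\ell$ middle cohomology (equivalently, the mod-$\ell$ Selmer group of the twisted curve modulo its ``constant'' classes), equipped with its cup-product pairing, which is symmetric after the appropriate Tate-twist normalisation. The first task is to show that for all but finitely many $t$ this is a non-degenerate $n$-dimensional orthogonal space of the desired type. Its dimension is the one predicted by the conductor exactly when the twisted family carries no unexpected Mordell--Weil classes, and this is where known cases of the Birch and Swinnerton-Dyer conjecture enter: the established analytic-rank-$\le 1$ cases over $\QQ$ (Gross--Zagier, Kolyvagin), together with the Artin--Tate formula for the elliptic surfaces involved, force the relevant $L$-values to be non-zero and hence kill the extra Mordell--Weil rank.

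The core of the argument is Hall's big monodromy theorem. It suffices to verify that (i) $\Gamma^{\mathrm g}$ fixes no non-degenerate proper subspace of $V$ --- in practice, that $\mathcal F$ is geometrically absolutely irreducible --- and (ii) some point of $\PP^1_{\Qbar}$ contributes a reflection to the local monodromy of $\mathcal F$. Condition (ii) is built into the construction: one arranges that at a special value of $t$ the family degenerates with a single ordinary vanishing cycle, so that Picard--Lefschetz produces a reflection $r_\delta$. Condition (i) is a self-contained computation with the local system (its singularities and Euler characteristic). Hall's theorem then gives $\Omega(V)\subseteq\Gamma^{\mathrm g}$, and tracking the determinant and spinor-norm characters of $\mathcal F$ (which are geometrically ``constant'') shows that $\Gamma^{\mathrm g}$ is one of $\Omega(V)$, $\SO(V)$, $\Or(V)$.

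The remaining step is the one I expect to be the main obstacle, and it is the source of the case division. One must arrange the arithmetic data (the curve $E$ and the twisting polynomial) so that: (a) when $n$ is even, $V$ has the prescribed isometry type --- equivalently $\disc(V)=\spin(-I)$ lands in the coset making $V$ split (part (ii)), non-split (part (iii)), or either (part (iv)); and (b) the $\Omega(V)$-subcover described above descends to a regular Galois extension of $\PP^1_{\QQ}$, i.e.\ no constant-field extension intervenes and the intermediate spinor-norm double cover is itself a rational curve over $\QQ$. Both (a) and (b) are governed by explicit quadratic characters --- the determinant character, the spinor-norm character, and $\disc(V)$ --- which one computes from the discriminant of $E$ and of the twisting polynomial. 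Because the discriminants at our disposal are built from the small primes $2,3,5,7$, whether the required coset conditions can be satisfied simultaneously depends on the residue of $\ell$ modulo $4$ (deciding whether $-1\in(\FF_\ell^\times)^2$) and on which of $2,3,5,7$ are squares modulo $\ell$. For odd $n$ the isometry type of $V$ is unique, so (a) is vacuous and (b) can always be met, yielding part (i) for every $\ell\ge 5$; in the even case one lands in $\POmega^+_n(\ell)$ when the discriminant forced by (b) is a square in the available range --- automatic when $4\mid n$ or $\ell\equiv 1\bmod 4$ --- in $\POmega^-_n(\ell)$ in the complementary situation, and in both as soon as enough of $2,3,5,7$ fail to be squares modulo $\ell$ to leave room for either choice. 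I would organise the write-up by first establishing the dimension count and the geometric monodromy in a single uniform family, and only at the end making the arithmetic choices that split into parts (i)--(iv).
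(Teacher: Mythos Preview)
Your overall architecture is right: build an orthogonal local system from a one-parameter family of quadratic twists, apply Hall's big-monodromy theorem to get $\Omega(V)\subseteq\Gamma^{\mathrm g}$, and then control the arithmetic image via the determinant and spinor-norm characters. But two of your key steps are misplaced, and getting them straight is exactly what makes the argument work.

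First, the role of BSD is not what you wrote. The dimension $n=\dim_{\FF_\ell}V$ is a purely local calculation: it is the Euler--Poincar\'e characteristic of the pushforward $j_*E[\ell]$ on $\PP^1$, computable from the Kodaira symbols of the twisted curve, and no $L$-value enters. (There is no ``extra Mordell--Weil rank'' to kill; the vanishing of $H^0$ and $H^2$ follows from $\rho_{E,\ell}(\pi_1)=\SL_2(\FF_\ell)$.) Where BSD actually enters is in computing the \emph{spinor norm} of Frobenius. The mechanism is: for a finite field $k$ of order $q$ and a point $m$ of the parameter space, one has $\det(I-\theta_\ell(\Frob_m)T)\equiv L(T/q,E_m)\pmod\ell$; Zassenhaus's formula then gives $\spin(-\theta_\ell(\Frob_m))=2^{N}\det(I-\theta_\ell(\Frob_m))\cdot(\FF_\ell^\times)^2 = 2^{N}L(1/q,E_m)\cdot(\FF_\ell^\times)^2$, and the \emph{proven} function-field BSD (Artin--Tate--Milne, plus Cassels' pairing forcing $|\Sha|$ to be a square) yields $L(1/q,E_m)\in q^{-1+\chi}c_{E_m}\cdot(\QQ^\times)^2$. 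This is the computation that pins down which coset of $\Omega(V)$ each Frobenius class lands in. Gross--Zagier and Kolyvagin are about curves over $\QQ$ and play no role; the curves here live over $\FF_p(t)$ for varying $p$, and it is precisely the unconditional function-field BSD that makes the spinor-norm step go through.

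Second, the way you propose to descend to a regular extension is not quite the paper's, and your heuristic for the case split in (ii)--(iv) is off. You suggest taking the subcover cut out by $\ker(\det)\cap\ker(\spin)$ and checking it is rational. The paper instead writes down an explicit degree-$\le 4$ rational map $h\colon W\to M$ between affine lines and shows, via the spinor/determinant computation above together with equidistribution of Frobenius, that the pulled-back arithmetic image already sits inside $\pm\Omega(V)$; quotienting by $\{\pm I\}$ then gives the regular $\Omega(V)$-extension of $\QQ(v)$. The specific $h$ is engineered so that the Tamagawa product $c_{E_{h(w)}}$ (hence the spinor norm) and the root number $\varepsilon_{E_{h(w)}}$ (hence the determinant) land in the right square classes for every $w$. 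Finally, the primes $2,3,5,7$ in (iv) are not discriminants of the initial curve; they are the values of the invariant $\gamma$ (built from multiplicative-fibre contributions) in four separate hand-built families, chosen precisely so that $\disc(V)=\gamma\cdot(\FF_\ell^\times)^2$ is a non-square when one of $2,3,5,7$ is. Parts (i)--(iii) use different explicit families with $\gamma=1$, which is why no congruence on $\ell$ is needed there beyond $\ell\ge 5$. A ``single uniform family'' with arithmetic choices made at the end does not appear to suffice; the paper constructs a distinct Weierstrass equation for each residue class of $N$ (and, for $N=5$, proves big monodromy by hand since Hall's criterion $6B\le N$ fails).
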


The following is a restatement of Theorem~\ref{T:Omega total}(\ref{T:Omega total ii}) and (\ref{T:Omega total iii}).

\begin{corollary}  \label{C:Omega even}
Take any even integer $n\geq 6$ and prime $\ell \geq 5$.  If $V$ is an orthogonal space of dimension $n$ over $\FF_\ell$ with $\disc(V)=(\FF_\ell^\times)^2$, then the simple group $\POmega(V)$ occurs as the Galois group of a regular extension of $\QQ(t)$.
\end{corollary}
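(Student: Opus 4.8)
The plan is to deduce the corollary directly from Theorem~\ref{T:Omega total}(\ref{T:Omega total ii}) and (\ref{T:Omega total iii}); since the corollary is advertised as a restatement of these two items, there is no genuine new content, and the only point needing care is the bookkeeping of signs that translates the hypothesis $\disc(V)=(\FF_\ell^\times)^2$ into the language of split versus non-split spaces. The key identity to keep in mind is that, by definition, $V$ is split exactly when $\disc(V)=(-1)^{n/2}(\FF_\ell^\times)^2$, so whether "discriminant a square" coincides with "split" is governed entirely by the class of $(-1)^{n/2}$ in $\FF_\ell^\times/(\FF_\ell^\times)^2$, i.e.\ by $n \bmod 4$ and $\ell \bmod 4$.

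First I would handle $n\equiv 0\pmod 4$. Here $(-1)^{n/2}=1$, so the hypothesis says precisely that $V$ is split, hence $\POmega(V)\isom\POmega_n^+(\ell)$; as $n\equiv 0\pmod 4$, Theorem~\ref{T:Omega total}(\ref{T:Omega total ii}) realizes this group as a regular Galois group over $\QQ(t)$. Next I would take $n\equiv 2\pmod 4$, so $(-1)^{n/2}=-1$, and split on $\ell$. If $\ell\equiv 1\pmod 4$ then $-1$ is a square mod $\ell$, so again $\disc(V)=(\FF_\ell^\times)^2$ forces $V$ split and $\POmega(V)\isom\POmega_n^+(\ell)$, now covered by Theorem~\ref{T:Omega total}(\ref{T:Omega total ii}) via the hypothesis $\ell\equiv 1\pmod 4$. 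If instead $\ell\equiv 3\pmod 4$ then $-1$ is a non-square, so the hypothesis $\disc(V)=(\FF_\ell^\times)^2$ forces $V$ non-split and $\POmega(V)\isom\POmega_n^-(\ell)$, which is realized by Theorem~\ref{T:Omega total}(\ref{T:Omega total iii}) since $n\equiv 2\pmod 4$ and $\ell\equiv 3\pmod 4$.

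These three cases exhaust all even $n\geq 6$ and all primes $\ell\geq 5$ (every such $\ell$ is odd, hence $\equiv 1$ or $3\pmod 4$), so the corollary follows. I do not anticipate any real obstacle here beyond keeping the sign conventions in the definitions of split, discriminant, and spinor norm straight; all the substantive work lies in Theorem~\ref{T:Omega total}.
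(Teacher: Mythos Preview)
Your proposal is correct and matches the paper's approach exactly: the paper presents this corollary as a direct restatement of Theorem~\ref{T:Omega total}(\ref{T:Omega total ii}) and (\ref{T:Omega total iii}) without giving any further argument, and your case analysis on $n\bmod 4$ and $\ell\bmod 4$ is precisely the bookkeeping needed to unwind the split/non-split dichotomy into the hypothesis $\disc(V)=(\FF_\ell^\times)^2$.
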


The following is a consequence of Theorem~\ref{T:Omega total} and the exceptional isomorphisms $\Omega_5(\ell) \cong \PSp_4(\FF_\ell)$, $\POmega^+_6(\ell)\cong \PSL_4(\FF_\ell)$ and $\POmega^-_6(\ell)\cong \PSU_4(\FF_\ell)$.  

\begin{corollary} Take any prime $\ell\geq 5$.
\begin{romanenum}
\item
The simple group $\PSp_4(\FF_\ell)$ occurs as the Galois group of a regular extension of $\QQ(t)$.  
\item
If $\ell$ is not congruent to $311$, $479$, $551$, $671$, $719$ and $839$ modulo $840$, then the simple group $\PSL_4(\FF_\ell)$ occurs as the Galois group of a regular extension of $\QQ(t)$.  
\item
If $\ell$ is not congruent to $1$, $121$, $169$, $289$, $361$ and $529$ modulo $840$, then the simple group $\PSU_4(\FF_\ell)$ occurs as the Galois group of a regular extension of $\QQ(t)$.
\end{romanenum}
\end{corollary}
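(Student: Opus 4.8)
The plan is to derive this corollary purely formally from Theorem~\ref{T:Omega total} together with the three exceptional isomorphisms $\Omega_5(\ell)\cong\PSp_4(\FF_\ell)$, $\POmega_6^+(\ell)\cong\PSL_4(\FF_\ell)$, $\POmega_6^-(\ell)\cong\PSU_4(\FF_\ell)$; no additional input about families of twists of elliptic curves or about monodromy is needed.

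For part (i) I would simply invoke Theorem~\ref{T:Omega total}(\ref{T:Omega total i}) with $n=5$: since $5$ is odd and $\ell\geq 5$, this gives a regular $\Omega_5(\ell)$-extension of $\QQ(t)$, and the exceptional isomorphism identifies that group with $\PSp_4(\FF_\ell)$. No congruence restriction arises because part~(\ref{T:Omega total i}) imposes none.

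For parts (ii) and (iii) I would take $n=6$, so the target groups are $\POmega_6^+(\ell)\cong\PSL_4(\FF_\ell)$ and $\POmega_6^-(\ell)\cong\PSU_4(\FF_\ell)$. Since $6\equiv 2\pmod 4$: part~(\ref{T:Omega total ii}) realizes $\POmega_6^+(\ell)$ as a regular Galois group over $\QQ(t)$ whenever $\ell\equiv 1\pmod 4$; part~(\ref{T:Omega total iii}) realizes $\POmega_6^-(\ell)$ whenever $\ell\equiv 3\pmod 4$; and part~(\ref{T:Omega total iv}) realizes both whenever at least one of $2,3,5,7$ is a non-square modulo $\ell$. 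Hence $\PSL_4(\FF_\ell)$ is realized unless $\ell\equiv 3\pmod 4$ and each of $2,3,5,7$ is a square modulo $\ell$, and $\PSU_4(\FF_\ell)$ is realized unless $\ell\equiv 1\pmod 4$ and each of $2,3,5,7$ is a square modulo $\ell$. It then remains to identify these exceptional sets of primes with the residue classes modulo $840=2^3\cdot 3\cdot 5\cdot 7$ listed in the statement. Here I would apply quadratic reciprocity: $2$ is a square mod $\ell$ iff $\ell\equiv\pm1\pmod 8$, $5$ is a square iff $\ell\equiv\pm1\pmod 5$, $\legendre{3}{\ell}=(-1)^{(\ell-1)/2}\legendre{\ell}{3}$, and $\legendre{7}{\ell}=(-1)^{(\ell-1)/2}\legendre{\ell}{7}$. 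Feeding in $\ell\equiv 3\pmod 4$ turns the condition ``$2,3,5,7$ all squares'' into $\ell\equiv 7\pmod 8$, $\ell\equiv 2\pmod 3$, $\ell\equiv\pm1\pmod 5$, and $\ell$ a non-square mod $7$; a Chinese Remainder Theorem count then produces exactly the six classes $311,479,551,671,719,839\pmod{840}$, giving (ii). Feeding in $\ell\equiv 1\pmod 4$ instead turns it into $\ell\equiv 1\pmod 8$, $\ell\equiv 1\pmod 3$, $\ell\equiv\pm1\pmod 5$, and $\ell$ a square mod $7$, which yields the six classes $1,121,169,289,361,529\pmod{840}$, giving (iii).

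I do not expect a genuine obstacle: the argument is a formal combination of the main theorem with standard isomorphisms of low-rank classical groups. The only point requiring care is the reciprocity-and-CRT bookkeeping — one must correctly split into the cases $\ell\equiv 1$ and $\ell\equiv 3\pmod 4$ and enumerate the two lists of six residue classes without error.
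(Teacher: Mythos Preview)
Your proposal is correct and follows exactly the approach the paper indicates: the paper simply states that the corollary is a consequence of Theorem~\ref{T:Omega total} together with the exceptional isomorphisms $\Omega_5(\ell)\cong\PSp_4(\FF_\ell)$, $\POmega_6^+(\ell)\cong\PSL_4(\FF_\ell)$, $\POmega_6^-(\ell)\cong\PSU_4(\FF_\ell)$, leaving the reciprocity-and-CRT bookkeeping to the reader. You have filled in precisely those omitted details, and your case split on $\ell\bmod 4$ and the resulting enumeration of the six residue classes in each case are correct.
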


\subsection{Some previous work and related cases} \label{SS:previously}

Reiter \cite{MR1695795} proved Theorem~\ref{T:Omega total}(\ref{T:Omega total i}) in the special case where $2$ or $3$ is not a square modulo $\ell$; in particular, it covers the case $\ell=3$ which we excluded.   Additional special cases of Theorem~\ref{T:Omega total}(\ref{T:Omega total i}) for $n=5$ and $7$ were proved by H\"afner \cite{MR1157316}.   Theorem~\ref{T:Omega total}(\ref{T:Omega total iv}) covers the various cases of the regular inverse Galois problem for $\POmega^{+}_n(\ell)$ and $\POmega^{-}_n(\ell)$ with $\ell\geq 5$ that are due to Reiter \cite{MR1695795} and Malle-Matzat \cite{MR1711577}*{\S10.2}.  The cases of the regular inverse Galois problem in Theorem~\ref{T:Omega total}(\ref{T:Omega total ii}) and (\ref{T:Omega total iii}) appear to be new.  \\

We now briefly discuss the excluded cases $n=3$ and $4$; we do not obtain non-abelian simple groups when $n\leq 2$.  These cases are especially interesting because of the exceptional isomorphisms
\[
\Omega_3(\ell) \cong \PSL_2(\FF_\ell),\quad \POmega_4^+(\ell) \cong \PSL_2(\FF_\ell)\times \PSL_2(\FF_\ell)\quad \text{and} \quad \POmega_4^-(\ell)\cong \PSL_2(\FF_{\ell^2}).
\]
The simple group $\PSL_2(\FF_\ell)$ is known to occur as the Galois group of a regular extension of $\QQ(t)$ if $2$, $3$, $5$ or $7$ is not a square modulo $\ell$; the cases $2$, $3$ and $7$ are due to Shih \cite{MR0332725} and $5$ is then due to Malle \cite{MR1199685}.  The conclusion of Theorem~\ref{T:Omega total}(\ref{T:Omega total i}) and Theorem~\ref{T:Omega total}(\ref{T:Omega total ii}) with $n=3$ and $n=4$, respectively, remains open.  

In \cite{Zywina-PSL2}, the author showed that $\PSL_2(\FF_\ell)$ occurs as a Galois group of an extension of $\QQ$ for all primes $\ell$.   The construction of such extensions in \cite{Zywina-PSL2} is similar to those of this paper; however, regular extensions of $\QQ(t)$ are not obtained.

The group $\PSL_2(\FF_{\ell^2})$ is already known to occur as the Galois group of a regular extension of $\QQ(t)$ if $2$, $3$, $5$ or $7$ is not a square modulo $\ell$;  see \cite{MR2059760} and \cite{MR1979499} for $2$ and $3$, \cite{MR972820} for $5$, and \cite{MR2282374} for $7$.   For many other $\ell$, the group $\PSL_2(\FF_{\ell^2})$ is known to occur as the Galois group of an extension of $\QQ$; for examples, see \cite{MR0419358}*{\S7}, \cite{MR1352266} and \cite{MR1800679}.\newline

The simple group $G_2(\ell)$ occurs as the Galois group of a regular extension of $\QQ(t)$ for all primes $\ell \geq 5$ (cf.~\cite{MR817266} for $\ell>5$ and \cite{MR817265} for $\ell=5$).   The simple group $E_8(\ell)$ occurs as the Galois group of a regular extension of $\QQ(t)$ for all primes $\ell \geq 7$, cf.~\cite{guralnick2012rational};  this was first shown to be true by Yun for all $\ell$ sufficiently large \cite{Yun:2013}.  Theorem~\ref{T:Omega total}(\ref{T:Omega total i}) and Theorem~\ref{T:Omega total}(\ref{T:Omega total ii}) with $n\equiv 0 \pmod{4}$ are the first cases where one has analogous results for finite simple groups of a fixed \emph{classical} Lie type.

\subsection{A special case}  

We now give an overview of the ideas behind the proof of Theorem~\ref{T:Omega total} in the special case $n=5$. In particular, for a fixed $\ell\geq 5$, we will describe a regular extension of $\QQ(t)$ with Galois group isomorphic to $\Omega_5(\ell)$. This section can be safely skipped and will not be referred to later on.

Define $S:=\{2,3,\ell\}$ and the ring $R:=\ZZ[S^{-1}]$.  Define the $R$-scheme 
\[
M=\Spec R[u, u^{-1}, (u-1)^{-1}, (u+1)^{-1}];
\]  
it is an open subscheme of $\Aff^1_R=\Spec R[u]$.

Let $k$ be any finite field that is an $R$-algebra, i.e., a finite field whose characteristic is not $2$, $3$ or $\ell$.   Denote the cardinality of $k$ by $q$.  Take any $m\in M(k)$, i.e., any $m\in k-\{0,1,-1\}$.  Let $E_m$ be the elliptic curve over the function field $k(t)$ defined by the Weierstrass equation
\[
(t-m)\cdot \yy^2 = \xx^3+ 3(t^2-1)^3  \xx -2(t^2-1)^5.
\]
Denote by $L(T,E_m)$ the \defi{$L$-function} of the elliptic curve $E_m/k(t)$, see \S\ref{SS:Lfunction} for details.  One can show that $L(T,E_m)$ is a polynomial in $\ZZ[T]$ of degree $5$.   For example, with $k=\FF_5$ and $m=1$, one can compute that $L(T,E_m)=  1- 2 T + T^2 - 5 T^3+ 2\cdot 5^3 T^4- 5^5T^5$.

Using the cohomological description of $L$-functions, we will construct an orthogonal space $V_\ell$ over $\FF_\ell$ of dimension $5$ and a continuous representation
\[
\theta_\ell\colon \pi_1(M) \to \Or(V_\ell)
\]
such that for any $k$ and $m\in M(k)$ as above, we have
\begin{equation} \label{E:compatibility 5 intro}
\det(I - \theta_\ell(\Frob_m) T) \equiv L(T/q, E_m) \pmod{\ell}.
\end{equation}
Here $\pi_1(M)$ is the \'etale fundamental group of $M$ (with suppressed base point) and $\Frob_m$ is the geometric Frobenius conjugacy class of $m$ in $\pi_1(M)$.  

The representation $\theta_\ell$ has \defi{big monodromy}, i.e.,  $\theta_\ell(\pi_1(M_\Qbar)) \supseteq \Omega(V_\ell)$.   This will be shown  following the approach of Hall in \cite{MR2372151} (we will directly use Hall's results for the cases with $n>5$).    The key step is to show that the group $\calR$ generated by the reflections in $\theta_\ell(\pi_1(M_{\Qbar}))$ acts irreducibly on $V_\ell$.   The classification of finite irreducible linear groups  generated by reflections then gives a finite number of small possibilities for $\calR$ that need to be ruled out to ensure that $\calR\supseteq \Omega(V_\ell)$.    

The image of $\theta_\ell$ can sometimes be the full orthogonal group $\Or(V_\ell)$ (in fact, this happens if $\ell \equiv \pm 3 \pmod{8}$).

Let $W$ be the open subscheme $\Spec R[u, u^{-1}, (u^2-3)^{-1}, (u^2+3)^{-1}]$ of $\Aff^1_R$.    The morphism $h\colon W\to M$ given by $w\mapsto (-w^2 + 3)/(w^2 + 3)$ is  \'etale of degree $2$, so we have a representation
\[
\vartheta_\ell\colon \pi_1(W) \xrightarrow{h_*} \pi_1(M) \xrightarrow{\theta_\ell} \Or(V_\ell).
\]
We claim that there are inclusions 
\begin{equation} \label{E:intro inclusions}
\Omega(V_\ell) \subseteq \vartheta_\ell(\pi_1(W_{\Qbar})) \subseteq \vartheta_\ell(\pi_1(W)) \subseteq \pm \Omega(V_\ell),
\end{equation}
where $\pm \Omega(V_\ell)$ is the group generated by $\Omega(V_\ell)$ and $-I$.  The natural map $\Omega(V_\ell)\to (\pm\Omega(V_\ell))/\{\pm I\}$ is an isomorphism, so the claimed inclusions give a surjective homomorphism
\[
\beta\colon \pi_1(W) \xrightarrow{\vartheta_\ell} \pm \Omega(V_\ell) \to (\pm \Omega(V_\ell))/\{\pm I\} \cong \Omega(V_\ell)
\]
that satisfies $\beta(\pi_1(W_{\Qbar})) = \Omega(V_\ell)$.  Therefore, $\beta$ gives rise to a regular extension of $\QQ(u)$, i.e., the function field of $W$, that is Galois with Galois group isomorphic to $\Omega(V_\ell) \cong \Omega_5(\ell)$; this gives the desired extension for the $n=5$ case of Theorem~\ref{T:Omega total}(\ref{T:Omega total i}).   We now explain why the inclusions of (\ref{E:intro inclusions}) hold.\\

We have the inclusion $\vartheta_\ell(\pi_1(W_\Qbar)) \supseteq \Omega(V_\ell)$ of (\ref{E:intro inclusions}) since the simple non-abelian group $\Omega(V_\ell)$ is a normal subgroup of $\theta_\ell(M_{\Qbar})$ and the cover $h$ is abelian.\\

Now let $\kappa$ be any coset of $\Omega(V_\ell)$ in $\theta_\ell(\pi_1(M))$ with $\det(\kappa)=\{ -1 \}$.   Since $\det(\kappa)=\{-1\}$, one can show that there is an element $A\in \kappa$ such that $\det(I-A)\neq 0$.    From a formula of Zassenhaus, we find that $\spin(-A)=2\det(I-A) \cdot (\FF_\ell^\times)^2$.
Using equidistribution, there is a prime $p\nmid 6\ell$ and an $m\in M(\FF_p)$ such that $A$ is conjugate to $\theta_\ell(\Frob_m)$ in $\Or(V_\ell)$.    We have $L(T/p,E_m) \equiv \det(I-AT) \pmod{\ell}$ and hence $L(1/p, E_m) \equiv \det(I-A) \pmod{\ell}$.  Therefore,  
\[
\spin(-A)= 2\cdot L(1/p,E_m) \cdot (\FF_\ell^\times)^2.
\]

The special value $L(1/p, E_m)$ is linked to the arithmetic of the curve $E_m/\FF_p(t)$.   We have $L(1/p,E_m)\neq 0$ (since it is non-zero modulo $\ell$), so the \defi{Birch and Swinnerton-Dyer (BSD) conjecture} predicts that the Mordell-Weil group $E_m(\FF_p(t))$ is finite.    In fact, this is known unconditionally by work of Artin and Tate.   Moreover, from Artin, Tate and Milne, the following refined version of BSD is known to hold: we have
\[
L(1/p,E_m) = \frac{|\Sha_{E_m}| \cdot c_{E_m} }{|E_m(\FF_p(t))|^2\cdot p^{-1+\chi_{E_m}}},
\]
where $c_{E_m}$ is the product of Tamagawa numbers of $E_m$ over the places of $\FF_p(t)$, $12\chi_{E_m}$ is the degree of the minimal discriminant of $E_m$, and $\Sha_{E_m}$ is the (finite!) Tate-Shafarevich group of $E_{m}$.   Since $\Sha_{E_m}$ is finite, a pairing of Cassels on $\Sha_{E_m}$ shows that $|\Sha_{E_m}|$ is a square.      An application of Tate's algorithm shows that $\chi_{E_m}=3$ and that $c_{E_m}$ is a power of $2$.   So $2L(1/p,E_m) \in 2c_{E_m} (\QQ^\times)^2$ and  $\spin(-A)= 2c_{E_m} \cdot (\FF_\ell^\times)^2$. Using Tate's algorithm, one can show that
$2c_{E_m}\in \{16,64\}$ if $-3(m^2-1) \in \FF_p$ is a square and $2c_{E_m}=32$ otherwise.  Therefore,
\[
\spin(-A) = \begin{cases}
      (\FF_\ell^\times)^2 & \text{ if $-3(m^2-1) \in \FF_p$ is a square,} \\
      2(\FF_\ell^\times)^2 & \text{ if $-3(m^2-1) \in \FF_p$ is not a square.}
\end{cases}
\]

Now suppose that $\kappa$ is actually a coset of $\Omega(V_\ell)$ in $\vartheta_\ell(\pi_1(W))$.  Then $m=h(w)$ for some $w\in W(\FF_p)$.   We have $-3(m^2-1)=6^2 w^2/(w^2+3)^2$ which is clearly a square (our degree $2$ cover $h\colon W\to M$ was chosen to ensure this held),  so $\spin(-A) = (\FF_\ell^\times)^2$.   We have $-A \in \Omega(V_\ell)$ since $\spin(-A) = (\FF_\ell^\times)^2$ and $\det(-A)=(-1)^5 \det(A) = 1$.  Therefore, $\kappa = A \Omega(V_\ell)=-\Omega(V_\ell)$.  
\\

We now know that $\vartheta_\ell(\pi_1(W))$ contains $\Omega(V_\ell)$ and the only possibly $\Omega(V_\ell)$-coset with determinant $-1$ is $-\Omega(V_\ell)$.   Therefore, $\vartheta_\ell(\pi_1(W))$ is either $\pm \Omega(V_\ell)$ or is a subgroup of $\SO(V_\ell)$.   So to explain the last inclusion of (\ref{E:intro inclusions}), we need only show that $\vartheta_\ell(\pi_1(W))$ contains an element with determinant $-1$.

For any $p\notin S$ and $m\in M(\FF_p)$, there is a functional equation 
\[
T^5 L(T^{-1}/p, E_m) = \varepsilon_{E_m} L(T/p,E_m),
\]
where $\varepsilon_{E_m} \in \{\pm 1\}$ is the \emph{root number} of $E_m$.   Since $A:=\theta_\ell(\Frob_m)$ belongs to $\Or(V_\ell)$, we have $T^5 \det(I-T^{-1}A) \equiv \det(-A) \det(I-TA)$.  From (\ref{E:compatibility 5 intro}), we deduce that $\det(-A)=\varepsilon_{E_m}$ and hence $\det(\theta_\ell(\Frob_m))=-\varepsilon_{E_m}$.   One can express $\varepsilon_{E_m}$ as a product of local root numbers and a computation shows that $-\varepsilon_{E_m}$ is $1$ if $-3m \in \FF_p$ is a square and $-1$ otherwise.

So if $\vartheta_\ell(\pi_1(W))$ is a subgroup of $\SO(V_\ell)$, then $-3 h(w) = -3(-w^2+3)/(w^2+3)$ is a square in $\FF_p$ for all primes $p\notin S$ and $w\in W(\FF_p)$.   This is easily seen to be false, so we deduce that $\vartheta_\ell(\pi_1(W))$ is not a subgroup of $\SO(V_\ell)$.

\subsection{Overview}
In \S\ref{S:elliptic background}, we give background on elliptic curves defined over global function fields.  We will mainly be interested in non-isotrivial elliptic curves $E$ defined over a function field $k(t)$, where $k$ is a finite field of order $q$ with $(q,6)=1$.   We will recall the definition of the \emph{$L$-function} $L(T,E)$ of $E$; it is a polynomial with integer coefficients.  For almost all primes $\ell$, we will construct an orthogonal space $V_{E,\ell}$ over $\FF_\ell$ and a representation 
\[
\theta_{E,\ell}\colon \Gal(\bbar{k}/k)\to  \Or(V_{E,\ell})
\] 
such that $\det(I - \theta_{E,\ell}(\Frob_q) T) \equiv L(T/q,E) \pmod{\ell}$, where $\Frob_q$ is the geometric Frobenius (i.e., the inverse of $x\mapsto x^q$).   The determinant of $\theta_{E,\ell}(\Frob_q)$ is related to the \emph{root number} of $E$.   In many case, we can compute the spinor norm of $\theta_{E,\ell}(\Frob_q)$ by using the special value of $L(T,E)$ arising in the \emph{Birch and Swinnerton-Dyer conjecture}.   We will discuss the Birch and Swinnerton-Dyer conjecture in \S\ref{SS:BSD}.  We shall use known cases to give an simple description of the square class $L(1/q,E) \cdot (\QQ^\times)^2$ when $L(1/q,E)$ is non-zero.

In \S\ref{S:twists}, we follow Hall and consider families of quadratic twists.  We shall constructing a representation that encodes the representations $\theta_{E,\ell}$ as $E$ varies in our family.  In \S\ref{SS:big monodromy Hall}, we state an explicit version of a \emph{big monodromy} result of Hall.

In \S\ref{S:criterion}, we state a criterion to ensure that the representation of \S\ref{S:twists} will produce a group $\Omega(V)$ as the Galois group of a regular extension of $\QQ(t)$.   For any given example, the conditions are straightforward to verify using some basic algebra and Tate's algorithm.

In sections \ref{SS:numerics odd}, \ref{SS:numerics even 1} and \ref{SS:numerics even 2}, we give many examples and use our criterion to prove Theorem~\ref{T:Omega total} for all $n>5$.   We shall not explain how the equations in these sections were found;  they were discovered through many numerical experiments (though the paper \cite{MR1129371} served as a useful starting point since it gives many elliptic surfaces with only four singular fibers).

Finally, in \S\ref{S:N=5} we complete the proof of Theorem~\ref{T:Omega total} for $n=5$.   The big monodromy criterion of Hall does not apply for our example, so we need to prove it directly.\\

\subsection*{Notation}
Throughout the paper, we will feely use Tate's algorithm, see \cite{MR1312368}*{IV~\S9} or \cite{MR0393039}.  All fundamental groups, cohomology and sheaves in the paper are with respect to the \'etale topology.  We will often suppress base points for our fundamental groups, so many groups and representations will only be determined up to conjugacy.  We will indicate base change of schemes by subscripts, for example given a scheme $X$ over $\QQ$, we denote by $X_{\Qbar}$ the corresponding scheme base changed to $\Qbar$.  

\subsection*{Acknowledgements.} 
Numerical experiments and computations were done using \texttt{Magma} \cite{Magma}.  Thanks to Brian Conrad for comments and corrections.

\section{\texorpdfstring{$L$}{L}-functions of elliptic curves over global function fields}  \label{S:elliptic background}
 
In this section, we give some background on the arithmetic of elliptic curves defined over global function fields.

Fix a finite field $k$ whose cardinality $q$ is relatively prime to $2$ and $3$.  Let $C$ be a smooth, proper and geometrically irreducible curve of genus $g$ over $k$ and denote its function field by $K$.   Let $|C|$ be the set of closed points of $C$.  For each $x\in |C|$, let $\FF_x$ be the residue field at $x$ and let $\deg x$ be the degree of $\FF_x$ over $k$.  Each closed point $x\in |C|$ gives a discrete valuation $v_x\colon K \twoheadrightarrow  \ZZ\cup\{\infty\}$ and we denote by $K_x$ the corresponding local field.

Fix an elliptic curve $E$ defined over $K$ whose $j$-invariant $j_E$ is non-constant (i.e., $j_E\in K-k$).   Let $\pi\colon\calE\to C$  be the N\'eron model of $E/K$, cf.~\cite{MR1045822}*{\S1.4}.  Let $U\subseteq C$ be the dense open subset complementary to the finite number of points of bad reduction for $E$.  By abuse of notation, we let $E \to U$ be the (relative) elliptic curve $\pi^{-1}(U)\xrightarrow{\pi} U$; the fiber over the generic point of $U$ is $E/K$.

\subsection{Kodaira symbols}

For each closed point $x$ of $C$,  we can assign a \defi{Kodaira symbol} to the elliptic curve $E$ after base extending by the local field $K_x$; it can be quickly computed using Tate's algorithm, cf.~\cite{MR1312368}*{IV~\S9} or \cite{MR0393039}.     The possible Kodaira symbols are the following: $\I_n$ ($n\geq 0$), $\I_n^*$ ($n\geq 0$), $\II$, $\III$, $\IV$, $\IV^*$, $\III^*$, $\II^*$.     

Let $\Kod(E)$ be the multiset consisting of the Kodaira symbols of $E$ at the points $x\in C$ for which $E$ has bad reduction; we count a Kodaira symbol at $x$ with multiplicity $\deg x$.   Note that the multiset $\Kod(E)$ does not change if we replace $E$ by its base extension to the function field of $C_{k'}$ for any finite extension $k'/k$.

\subsection{Some invariants} \label{SS:some invariants}

We now define some numerical invariants of the curve $E$.  For each $x \in |C|$, we define integers $f_x(E)$, $e_x(E)$, $\gamma_x(E)$, $\lambda_x(E)$, $r_x(E)$ and $b_x(E)$ as in the following table:
\begin{center}
\begin{tabular}{c||c|c|c|c|c|c|c|c|c|c}
Kodaira symbol at $x$ & $\text{I}_0$ & $\text{I}_0^*$ &$\text{I}_n \,(n\geq 1) $& $\text{II}$ & $\text{III}$ &  $\text{IV}$ &  $\text{I}_n^*\, (n\geq 1)$ &  $\text{IV}^*$ & $\text{III}^*$  & $\text{II}^*$ \\\hline 
$f_x$ & $0$ & $2$ & $1$ & $2$ & $2$ & $2$ & $2$     & $2$ & $2$ & $2$ \\
$e_x$ & $0$ & $6$ &$n$ & $2$ & $3$ & $4$ & $6+n$ & $8$ & $9$  & $10$\\
$\gamma_x$ & $1$ & $1$ & $n/\gcd(2,n)$ & $1$ & $1$ & $3$ & $2/\gcd(2,n)$ & $3$ & $1$  & $1$\\
$\lambda_x$ & $1$ & $1$ & $n$ & $1$ & $1$ & $1$ & $n$ & $1$ & $1$ & $1$\\
$r_x$ & $1$ & $1$ & $1$ & $1$ & $2$ & $3$ & $1$ & $3$ & $2$ & $1$ \\
$b_x$ & $0$ & $0$ & $0$ & $1$ & $1$ & $1$ & $1$ & $1$ & $1$ & $1$ \\
\end{tabular}
\end{center}
Define 
 \[
N_E= -4+4g+\sum_{x\in |C|} f_x(E) \deg(x),\quad \chi_E = \frac{1}{12} \sum_{x\in |C|} e_x(E) \deg x, \quad \text{and}\quad \gamma_E = \prod_{x\in |C|} \gamma_x(E)^{\deg x}.
\]
Define $\calL_E$ to be the product of the primes $\ell \geq 5$ that divide $\lambda_x(E)$ for some $x\in |C|$; it is also the product of primes $\ell\geq 5$ that divide $\max\{1,-v_x(j_E)\}$ for some $x\in |C|$.

\begin{remark} \label{R:Phi dependence}
\begin{romanenum}
\item \label{R:Phi dependence i}
The integers $N_E$, $\chi_E$,  $\gamma_E$ and $\calL_E$ can all be determined directly from the multiset $\Kod(E)$.   
\item
One way to prove that $\chi_E$ is  an integer is to show that it agrees with the Euler characteristic of the sheaf $\OO_X$, where $X\to C$ is a relatively minimal elliptic surface that extends $E\to U$ with $X$ smooth and projective.
\end{romanenum}
\end{remark}
 
Let $\calE_x/\FF_x$ be the fiber of the N\'eron model $\calE\to C$ of $E/K$ at $x$.   We define $c_x(E)$ to be the order of the group $\calE_x(\FF_x)/\calE_x^\circ(\FF_x)$, where $\calE_x^\circ$ is the identity component of the group scheme $\calE_x$.  Define the integer 
 \[
 c_E= \prod_{x\in |C|} c_x(E);
 \]
it is well-defined since $c_x(E)=1$ whenever $E$ has good reduction at $x$.   The integer $c_E$ serves as a ``fudge factor'' in the Birch and Swinnerton-Dyer conjecture, cf.~\S\ref{SS:BSD}, and can be quickly computed using Tate's algorithm.\\

Let $A_E$ be the set of closed points $x$ of $C$ for which $E$ has bad reduction of additive type.  For each $x\in A_E$, let $\chi_x\colon \FF_x^\times \to \{\pm 1\}$ be the non-trivial quadratic character (recall that $q$ is odd).   Let $m_+$ be the number of closed points $x$ of $C$ for which $E$ has split multiplicative reduction.  Define 
\[
\displaystyle\varepsilon_E := (-1)^{m_+} {\prod}_{x\in A_E} \chi_x(-r_x(E)) \in \{\pm 1\};
\]
it is the \defi{root number} of $E$, cf.~Theorem~\ref{T:L-function fundamentals}.

\subsection{\texorpdfstring{$L$}{L}-functions} \label{SS:Lfunction}
Take any closed point $x$ of $C$.  If $E$ has good reduction at $x$, define the polynomial $L_x(T):= 1 - a_x(E) T^{\deg x} + q^{\deg x} T^{2\deg x}$, where $E_x/\FF_x$ is the fiber of $E$ over $x$ and  $a_x(E):=q^{\deg x} +1 -|E_x(\FF_x)|$.    If $E$ has bad reduction at $x$, define $a_x(E)$ to be $1$, $-1$ or $0$ when $E$ has split multiplicative, non-split multiplicative or additive reduction, respectively, at $x$; define the polynomial $L_x(T)=1-a_x(E) T^{\deg x}$.    

The \defi{$L$-function} of $E$ is the formal power series
\[
L(T,E) := {\prod}_{x\in |C|} L_x(T)^{-1} \in \ZZ[\![T]\!].
\]
The following gives some fundamental properties of $L(T,E)$; we will give a sketch in \S\ref{SS:proof theta mod ell}.
 
\begin{theorem} \label{T:L-function fundamentals}
The $L$-function $L(T,E)$ is a polynomial of degree $N_E$ with integer coefficients and satisfies the functional equation
\[
T^{N_E} L(T^{-1}/q, E) = \varepsilon_E \cdot L(T/q,E).
\]
\end{theorem}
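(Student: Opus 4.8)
The plan is to realize $L(T,E)$ cohomologically and deduce both the polynomiality/degree statement and the functional equation from Grothendieck's theory of $L$-functions together with Poincar\'e duality. First I would choose a relatively minimal elliptic surface $\pi\colon X\to C$ extending $E\to U$, with $X$ smooth and projective over $k$; equivalently, work with the lisse $\QQ_\ell$-sheaf $\calF=R^1\pi_*\QQ_\ell$ on $U$ (for an auxiliary prime $\ell\nmid q$) and its middle extension $j_*\calF$ to $C$, where $j\colon U\injects C$. The Grothendieck--Lefschetz trace formula gives, for each closed point $x\in|C|$, that $L_x(T)^{-1}$ is the local factor $\det(1-\Frob_x T^{\deg x}\mid (j_*\calF)_{\bar x})^{-1}$: over $U$ this is the characteristic polynomial of Frobenius on $H^1$ of the fiber, and at the bad points one checks that the inertia invariants $(j_*\calF)_{\bar x}$ reproduce exactly the degenerate local factors $1-a_x(E)T^{\deg x}$ recorded in \S\ref{SS:Lfunction} (split/non-split multiplicative giving a one-dimensional invariant space on which Frobenius acts by $\pm 1$, additive giving zero). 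Taking the product over $|C|$ then identifies $L(T,E)$ with the cohomological $L$-function $L(C,j_*\calF,T)=\prod_{i=0}^{2}\det(1-\Frob_q T\mid H^i(C_{\kbar},j_*\calF))^{(-1)^{i+1}}$.

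Next I would pin down the individual cohomology groups. Since $E/K$ is non-isotrivial, $\calF$ has no global invariants or coinvariants on $U_{\kbar}$, so $H^0(C_{\kbar},j_*\calF)=H^2(C_{\kbar},j_*\calF)=0$; hence the cohomological $L$-function collapses to the single polynomial $\det(1-\Frob_q T\mid H^1(C_{\kbar},j_*\calF))$, proving that $L(T,E)$ is a polynomial with $\QQ_\ell$-coefficients. Its degree is $\dim H^1(C_{\kbar},j_*\calF)$, which I would compute by the Grothendieck--Ogg--Shafarevich Euler characteristic formula: $\chi(C,j_*\calF)=\rk(\calF)\cdot\chi(U_{\kbar})-\sum_{x\in C\setminus U}(\dim\calF-\dim\calF^{I_x}+\mathrm{Sw}_x)$, and with $\ell\nmid q$ and $q$ prime to $6$ the Swan conductors vanish, so the local drop at $x$ is $2-\dim(j_*\calF)_{\bar x}$, which equals $f_x(E)$ by the table (this is precisely why $f_x$ is defined the way it is). Assembling the terms gives $-\dim H^1 = 2(2-2g-\#(C\setminus U)) - \sum f_x\deg x + \sum(\dim(j_*\calF)_{\bar x}-0)$; after bookkeeping this simplifies to $\dim H^1 = -4+4g+\sum_x f_x(E)\deg x = N_E$. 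Integrality of the coefficients is then a separate, standard point: $L(T,E)\in\QQ[[T]]$ from the Euler product has integer coefficients because each $L_x(T)\in\ZZ[T]$ with constant term $1$, so the formal inverse lies in $\ZZ[[T]]$, and we have just shown the power series is a polynomial.

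For the functional equation I would invoke Poincar\'e duality for the middle extension. The cup product pairing $H^1(C_{\kbar},j_*\calF)\times H^1(C_{\kbar},j_*\calF^{\vee}(1))\to H^2(C_{\kbar},\QQ_\ell(1))\isom\QQ_\ell$ is perfect and Frobenius-equivariant; the Weil pairing on $\calF=R^1\pi_*\QQ_\ell$ gives an isomorphism $\calF\isom\calF^{\vee}(-1)$, i.e.\ $\calF$ is self-dual up to a Tate twist, compatibly with the middle-extension operation. Feeding this into the standard determinant-of-Frobenius argument yields $T^{N_E}L(T^{-1}/q,E)=\pm L(T/q,E)$, with the sign being the determinant of geometric Frobenius acting on $H^1(C_{\kbar},j_*\calF)$, a one-dimensional space (a power of $q$ times $\pm1$); normalizing by $q^{N_E}$ as in the statement absorbs the $q$-power and leaves a sign $\varepsilon_E\in\{\pm1\}$. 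The remaining task is to show this sign equals the explicit local-root-number product $(-1)^{m_+}\prod_{x\in A_E}\chi_x(-r_x(E))$ from \S\ref{SS:some invariants}: this I would obtain by the local--global factorization of $\varepsilon$-constants (Deligne--Laumon), computing each local term from the inertia action on $\calF$ at $x$ — at split multiplicative places Frobenius acts by $+1$ on the rank-one invariants contributing a factor $-1$ (hence the $(-1)^{m_+}$), at non-split multiplicative places the unramified quadratic twist contributes $+1$, and at additive places the tame quadratic character of the local representation contributes $\chi_x(-r_x(E))$, the sign $-r_x$ being read off from the shape of the monodromy in each Kodaira type.

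\textbf{Main obstacle.} The bulk of the verification of the first assertions (polynomiality, degree $N_E$, integrality) is a direct application of Grothendieck--Ogg--Shafarevich plus vanishing of $H^0,H^2$, and is routine once the dictionary between Kodaira types and local inertia invariants is set up. The genuinely delicate part is the precise \emph{sign} in the functional equation: proving that the abstract determinant sign coincides with the hands-on product $(-1)^{m_+}\prod_{x\in A_E}\chi_x(-r_x(E))$ requires a careful place-by-place computation of local root numbers (or local $\varepsilon$-factors) for each Kodaira symbol, including getting the normalizations and the ``$-r_x$'' twist exactly right. This is where I would spend the most care, and where I would lean on Rohrlich-style local root number tables (or Laumon's product formula for $\varepsilon$-constants) rather than reconstructing everything from scratch.
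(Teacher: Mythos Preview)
Your proposal is correct and follows the standard textbook route. The paper's argument is the same in its backbone---cohomological interpretation of $L(T,E)$ via $j_*$ of the Tate module, Grothendieck--Lefschetz, vanishing of $H^0$ and $H^2$, and the Euler characteristic computation giving $N_E$---but diverges from yours in two places worth noting.

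First, for the vanishing of $H^0$ and $H^2$ the paper does not invoke non-isotriviality and irreducibility of the $\QQ_\ell$-monodromy directly; instead it uses the sharper mod-$\ell$ statement $\rho_{E,\ell}(\pi_1(U_{\kbar}))=\SL_2(\FF_\ell)$ (Proposition~\ref{P:elliptic big monodromy}), which it needs anyway for Proposition~\ref{P:main theta mod ell}. Your argument is cleaner if the goal is only Theorem~\ref{T:L-function fundamentals}; the paper's is chosen because it is simultaneously proving Proposition~\ref{P:main theta mod ell}.

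Second, and more interestingly, the paper does \emph{not} derive the functional equation from Poincar\'e duality on $H^1(C_{\kbar},j_*\calF)$ as you do. Instead it bootstraps from the mod-$\ell$ picture: having shown $L(T/q,E)\equiv\det(I-\theta_{E,\ell}(\Frob_q)T)\pmod{\ell}$ with $\theta_{E,\ell}(\Frob_q)\in\Or(V_{E,\ell})$, it uses the elementary identity $T^{N_E}\det(I-AT^{-1})=\pm\det(I-AT)$ valid for any orthogonal $A$, giving the functional equation modulo $\ell$; since this holds for all primes $\ell\nmid 6q\calL_E$, it holds over $\ZZ$. This is a nice trick that avoids setting up Poincar\'e duality carefully on $H^1$, at the cost of being logically entangled with Proposition~\ref{P:main theta mod ell}. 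Your Poincar\'e-duality route is more self-contained and is the argument one would give in isolation.

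For the explicit sign, both approaches converge: one must compute local root numbers Kodaira-type by Kodaira-type. The paper cites Theorem~3.1 of Conrad--Conrad--Helfgott rather than Deligne--Laumon, but this is a matter of which reference one reaches for; your identification of this as the delicate step is exactly right.

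One small quibble: your displayed Grothendieck--Ogg--Shafarevich formula for $\chi(C,j_*\calF)$ is slightly garbled (it should read $\rk(\calF)(2-2g)-\sum_x\mathfrak{f}_x$ with $\mathfrak{f}_x$ the local conductor, not $\rk(\calF)\cdot\chi(U_{\kbar})-\cdots$), but your ``after bookkeeping'' conclusion $\dim H^1=N_E$ is correct, so this is a slip in the sketch rather than a gap.
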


If $q$ is a power of $2$ or $3$, then Theorem~\ref{T:L-function fundamentals} will hold except the numerical recipes for $N_E$ and $\varepsilon_E$ need to be refined.  One can also consider the case where $E$ has constant $j$-invariant; $L(T,E)$ is still a rational function but is no longer a polynomial.

Another important property of $L(T,E)$, which we will not require, is that all its complex roots have absolute value $q^{-1}$.  This will follow from the cohomological interpretation and the work of Deligne.

\subsection{The Birch and Swinnerton-Dyer conjecture}   \label{SS:BSD}

The \defi{Mordell-Weil theorem} for $E$ says that the abelian group $E(K)$ is finitely generated.   It is straightforward to compute the torsion subgroup of $E(K)$ but computing its rank is more difficult.    Before stating the conjecture, we mention several invariants of $E$:
\begin{itemize}
 \item 
 Let $\Sha_E$ be the Tate-Shafarevich group of $E/K$.  
 \item
 Let $\mathcal{D}_E$ be the minimal discriminant of $E/K$; we may view it as a divisor of $C$.  Using Tate's algorithm (and our ongoing assumption that $q$ is not a power of $2$ or $3$), we find that the degree of $\mathcal{D}_E$ is $12\chi_E$.
 \item
 The integer $c_E$ from \S\ref{SS:some invariants}.
\item
Let $E(K)_{\tors}$ be the torsion subgroup of $E(K)$. 
\item
The \defi{regulator} of $E$ is the real number $R_E:= \det( \ang{P_i}{P_j} )$, where $\ang{\,}{\,}\colon E(K)\times E(K) \to \RR$ is the canonical height pairing and $P_1,\ldots, P_r \in E(K)$ are points that give a basis for the free abelian group $E(K)/E(K)_{\tors}$.  
\end{itemize}
 
The following is a conjectural relation between the rank of the Mordell-Weil group $E(K)$ and its $L$-function.
 
\begin{conjecture}[Birch and Swinnerton-Dyer] \label{C:BSD}
Let $r$ be the rank of $E(K)$.
\begin{alphenum}
\item \label{C:BSD a}
The rank $r$ agrees with multiplicity of $1/q$ as a root of $L(T,E)$.   
\item \label{C:BSD b}
For some prime $\ell$, the $\ell$-primary component of $\Sha_E$ is finite.
\item \label{C:BSD c}
The group $\Sha_E$ is finite and 
\[
L(q^{-s}, E) \sim \frac{|\Sha_E|\, R_E\, c_E }{|E(K)_{\tors}|^2\cdot q^{g-1+\chi_E}}  \cdot (s-1)^r
\]
as $s\to 1$.
\end{alphenum}
\end{conjecture}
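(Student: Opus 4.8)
Conjecture~\ref{C:BSD} is the Birch and Swinnerton-Dyer conjecture, which remains open in general; the plan is therefore to prove the part of it that is a theorem in the function-field setting — that (a), (b) and (c) are equivalent, and that all three hold whenever $L(1/q,E)\neq 0$ — since that is all the rest of the paper requires.

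The first step is to pass to a relatively minimal, smooth, projective elliptic surface $\pi\colon X\to C$ extending $E\to U$; its singular fibres realize the Kodaira types in $\Kod(E)$, the minimal discriminant $\mathcal{D}_E$ has degree $12\chi_E$, and by Remark~\ref{R:Phi dependence} the integer $\chi_E$ is the Euler characteristic $\chi(\OO_X)$. Writing $P_i(T)=\det(1-\Frob_q T\mid \HH^i_\et(X_{\kbar},\QQ_\ell))$, I would analyze the Leray spectral sequence for $\pi$ and the sheaves $R^j\pi_*\QQ_\ell$ to factor $P_2(T)$ as $L(T,E)$ times an explicit product of factors of the form $1-q^{\deg x}T^{\deg x}$, the latter coming from the zero section, a general fibre, the non-identity components of the reducible fibres, and (when $g>0$) the base; in parallel, the Shioda--Tate formula identifies $\rk\Num(X)$ with $\rk E(K)$ plus the rank $t$ of the corresponding ``trivial'' sublattice of $\Num(X)$. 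Since the same quantity $t$ relates $\ord_{T=1/q}P_2(T)$ to $\ord_{T=1/q}L(T,E)$, statement (a) — that $\rk E(K)=\ord_{T=1/q}L(T,E)$ — is equivalent to $\rk\Num(X)=\ord_{T=1/q}P_2(T)$, i.e.\ to the Tate conjecture for divisors on $X$.

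The second step feeds this into Tate's theorem, in the form completed through the work of Artin and Tate: for a smooth projective surface over a finite field, the Tate conjecture for divisors, the finiteness of $\Br(X)$, the finiteness of $\Br(X)[\ell^\infty]$ for a single prime $\ell$, and the validity of the Artin--Tate leading-term formula for $P_2(X,T)$ at $T=1/q$ are mutually equivalent. Under the dictionary furnished by the Leray analysis — $\Br(X)\cong\Sha_E$, compatibly with $\ell$-primary parts; the trivial sublattice of $\Num(X)$ accounting for the fibre and section contributions; the canonical height pairing on $E(K)$ matching the intersection pairing on $X$ — these equivalences become precisely (a) $\Leftrightarrow$ (b) $\Leftrightarrow$ (c). Converting the Artin--Tate formula for $X$ into the formula displayed in (c) — producing $c_E=\prod_x c_x(E)$ from the contributions of the non-identity fibre components, the factor $q^{g-1+\chi_E}$ from $\chi(\OO_X)=\chi_E$, and the squareness of $|\Sha_E|$ from Cassels' alternating pairing once $\Sha_E$ is known finite — is the content of Milne's refinement.

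The last ingredient is the one unconditional input: cycle classes are Tate classes, so $\rk\Num(X)\le\ord_{T=1/q}P_2(T)$, whence $\rk E(K)\le\ord_{T=1/q}L(T,E)$. Thus if $L(1/q,E)\neq 0$ then $\rk E(K)=0$, so $\rk\Num(X)$ and $\ord_{T=1/q}P_2(T)$ both equal $t$, the Tate conjecture for divisors on $X$ holds unconditionally, and Tate's theorem delivers (a), (b) and (c); this is the only regime used elsewhere in the paper, where $L(1/q,E)$ is nonzero precisely because it is nonzero modulo $\ell$. The real obstacle is twofold: for $r\geq 1$ the conjecture subsumes the (open) finiteness of $\Sha_E$ for arbitrary non-isotrivial $E$, so it cannot be proved outright; and even in the provable part the delicate work is the Leray/Shioda--Tate bookkeeping — the exact normalization relating $L(T,E)$ to $P_2(T)$, the precise factor contributed by each Kodaira type, and the exponent $g-1+\chi_E$ — needed to make Tate's surface-theoretic identity emerge verbatim as the formula in (c).
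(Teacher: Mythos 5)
The statement you were given is presented in the paper as a conjecture and is not proved there; what the paper actually establishes (Theorem~\ref{T:Artin-Tate-Milne}, Proposition~\ref{P:rank inequality}, Corollary~\ref{C:BSD main}) is exactly what you set out to prove — the equivalence of (a), (b), (c) and their validity when $L(1/q,E)\neq 0$ — and it does so by citing Tate's Bourbaki seminar and Milne's theorem. Your sketch via the elliptic surface, the Leray/Shioda--Tate bookkeeping, the identification $\Br(X)\cong\Sha_E$, and the Artin--Tate leading-term formula is precisely the content of those cited references, so your route coincides with the paper's.
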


A nice exposition of the conjecture, with the explicit constant $\alpha$, is given by Gross in \cite{MR2882691}; he also gives similar details for the more familiar number field analogue.  Note that the regulator in \cite{MR2882691} is equal to $R_E/|E(K)_{\tors}|^2$.

Conjecture~\ref{C:BSD}(\ref{C:BSD c}) clearly implies the other two parts; they are in fact equivalent.
 
\begin{theorem}[Artin-Tate, Milne] \label{T:Artin-Tate-Milne}
Statements (\ref{C:BSD a}), (\ref{C:BSD b}) and (\ref{C:BSD c}) of Conjecture~\ref{C:BSD} are equivalent.
\end{theorem}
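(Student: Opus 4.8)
The plan is to pass from the elliptic curve $E/K$ to an associated surface and then invoke the surface-theoretic equivalences of Artin--Tate and Milne. First I would fix a relatively minimal elliptic surface $f\colon X\to C$ that is smooth and projective over $k$, extends $E\to U$, and carries the zero-section $\sigma\colon C\hookrightarrow X$; such an $X$ exists because $j_E$ is non-constant and $q$ is prime to $6$. The proof then rests on three standard dictionaries relating the arithmetic of $E/K$ to the geometry of $X$: \emph{(i)} the Shioda--Tate exact sequence, which exhibits the N\'eron--Severi group $\Num(X)$ as an extension of $E(K)$ by the ``trivial'' sublattice $\calT$ generated by $\sigma(C)$, a general fibre, and the non-identity components of the reducible fibres, so that $\rho(X)=\rank E(K)+\rank\calT$ and the discriminant of the intersection form on $\calT$ is a product over the bad fibres of the discriminants of the corresponding negative-definite root lattices; \emph{(ii)} Grothendieck's comparison identifying the Brauer group $\Br(X)$ with the Tate--Shafarevich group $\Sha_E$ up to finite, explicitly bounded groups that are trivial away from $p$, so in particular $\Br(X)[\ell^\infty]$ is finite if and only if $\Sha_E[\ell^\infty]$ is, for each $\ell$; and \emph{(iii)} the factorisation of the zeta function $\zeta(X,s)$ coming from the Leray spectral sequence for $f$, in which the ``interesting'' factor $P_2(X,q^{-s})$ equals $L(q^{-s},E)$ times explicit factors whose reciprocal roots at $s=1$ account exactly for $\rank\calT$, whence $\ord_{s=1}\zeta(X,s)=\ord_{T=1/q}L(T,E)+\rank\calT$.

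With these in place I would translate the three parts of the conjecture. Part (a) becomes, via (i) and (iii), the assertion $\ord_{s=1}\zeta(X,s)=\rho(X)$, i.e.\ the Tate conjecture $T^1(X)$ for divisors on $X$; part (b) becomes, via (ii), finiteness of $\Br(X)[\ell^\infty]$ for some $\ell$; and part (c) becomes the full Artin--Tate formula for $X$. For the last translation one must check that the fudge factors match up: the regulator $R_E$ together with the fibral contributions assembles into $|\disc\Num(X)|$ (this is where the Tamagawa numbers $c_x(E)$ enter, through the component groups $\calE_x(\FF_x)/\calE_x^\circ(\FF_x)$ and the fibre-lattice discriminants), $|E(K)_{\tors}|$ matches $\#\Num(X)_{\tors}$, and $q^{g-1+\chi_E}$ equals $q^{\alpha(X)}$ with $\alpha(X)=\chi(\OO_X)-1+\dim\Pic^0_X=\chi_E-1+g$, using Remark~\ref{R:Phi dependence}(ii) and $\Pic^0_X\cong\Jac C$.

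The theorem then follows from the known equivalences for a smooth projective surface $X$ over a finite field: $T^1(X)$ holds $\iff$ $\Br(X)[\ell^\infty]$ is finite for some $\ell$ $\iff$ $\Br(X)$ is finite, and, granting this, the Artin--Tate formula holds. The first three equivalences, and the formula up to a power of the characteristic $p$, are due to Artin--Tate, and Milne supplied the missing $p$-part; finiteness of $\Sha_E$ itself (asserted in (c), not merely one $\ell$-primary part) drops out once $\Br(X)$ is known to be finite, and the whole argument is insensitive to whether the prime appearing in (b) is $p$, since finiteness of a single $\Br(X)[\ell^\infty]$ already forces $\Br(X)$ finite.

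I expect the genuine difficulty to be twofold. The $p$-part of the Artin--Tate formula requires flat and crystalline cohomology rather than the $\ell$-adic arguments that suffice elsewhere, and one should bear in mind the later corrections of Liu--Lorenzini--Raynaud concerning the precise power of $p$ and the square class of $|\Sha_E|$ in matching the Artin--Tate and Birch--Swinnerton-Dyer normalisations. The other delicate point is the bookkeeping in part (c): verifying that the product of Tamagawa numbers $c_E$ and the regulator $R_E$ combine correctly with $q^{\chi_E}$ and $|E(K)_{\tors}|$ into the Artin--Tate constant amounts to a local computation at each bad fibre matching the component group with the discriminant of the associated fibre lattice. Everything else is a matter of assembling standard references.
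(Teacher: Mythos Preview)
Your proposal is correct and is, in substance, an expansion of what the paper does: the paper's own proof is a two-sentence citation of Theorem~8.1 of Milne's \emph{On a conjecture of Artin and Tate} (building on Tate's 1966 Bourbaki seminar), with a remark that the bad-place factors in the $L$-function were worked out later by Tate. What you have written is essentially a sketch of the argument that lies behind that citation---passage to the elliptic surface, the Shioda--Tate and Brauer/Sha dictionaries, the zeta-function factorisation, and then the surface-side Artin--Tate equivalences with Milne supplying the $p$-part---so the approaches coincide; you are simply unpacking the reference rather than invoking it, and your awareness of the later Liu--Lorenzini--Raynaud refinements is a bonus the paper does not mention.
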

\begin{proof}
This follows from Theorem~8.1 of \cite{MR0414558}; it builds on the work of Artin and Tate presented in Tate's 1966 Bourbaki seminar \cite{Tate1964-1966}.   It should be noted that the $L$-functions in \cite{Tate1964-1966} do not include the now familiar factors at the bad places; those were later worked out by Tate in \cite{MR0393039}*{\S5}.   
\end{proof}

The following gives an a priori inequality between the two quantities in Conjecture~\ref{C:BSD}(\ref{C:BSD a}); it follows from the injectivity of the homomorphism $h$ of Theorem~5.2 of \cite{Tate1964-1966}.   

\begin{proposition}\label{P:rank inequality}
The rank of $E(K)$ is always less than or equal to the multiplicity of $1/q$ as a root of $L(T,E)$.
\end{proposition}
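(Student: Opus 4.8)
The statement to prove is Proposition~\ref{P:rank inequality}: the rank of $E(K)$ is at most the multiplicity of $1/q$ as a root of $L(T,E)$.

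\medskip

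The plan is to deduce this from the cohomological/geometric interpretation of the $L$-function together with the Shioda--Tate type homomorphism that Tate constructs in \cite{Tate1964-1966}*{Theorem 5.2}. First I would pass to the elliptic surface picture: let $X \to C$ be a relatively minimal smooth projective elliptic surface extending $E \to U$, with generic fiber $E/K$. Then $L(T,E)$ (up to elementary factors coming from the trivial and the reducible-fiber part of $H^2$) is the characteristic polynomial of geometric Frobenius $\Frob_q$ acting on the ``interesting'' piece of $H^2_{\et}(X_{\bbar{k}}, \QQ_\ell(1))$; equivalently, $L(T,E)$ records the eigenvalues on $H^1$ of $C$ with coefficients in the sheaf $R^1\pi_* \QQ_\ell$ coming from the Néron model. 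The multiplicity of $1/q$ as a root of $L(T,E)$ is then exactly the multiplicity of the eigenvalue $1$ (i.e. the dimension of the generalized Frobenius-fixed subspace) of $\Frob_q$ on this $H^2(1)$-piece.

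\medskip

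The key step, which is the content of Tate's Theorem 5.2, is the existence of an injective homomorphism $h$ from the Mordell--Weil group $E(K)$ (modulo torsion, tensored with $\QQ_\ell$, or directly as stated) into the Frobenius-invariants of $H^2_{\et}(X_{\bbar k}, \QQ_\ell(1))$ — concretely, $h$ is the map sending a $K$-point to the class of the corresponding horizontal divisor (section) on $X$ in the Néron--Severi group, composed with the cycle class map into étale cohomology; the sections land in the Frobenius-fixed part because they are defined over $k$. The rank of $E(K)$ therefore injects into $(H^2_{\et}(X_{\bbar k},\QQ_\ell(1)))^{\Frob_q}$, whose dimension is bounded above by the dimension of the full \emph{generalized} eigenspace for eigenvalue $1$, which in turn equals the multiplicity of $1/q$ as a root of $L(T,E)$ once the elementary (constant and non-connected fiber) factors, which contribute eigenvalue $1$ with the ``expected'' multiplicity matching the Shioda--Tate count, are accounted for. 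Thus $\rk E(K) \le \operatorname{ord}_{T = 1/q} L(T,E)$.

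\medskip

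Concretely, the proof is essentially a citation: invoke \cite{Tate1964-1966}*{Theorem 5.2}, whose homomorphism $h$ is injective, and combine its injectivity with the identification of $\dim (H^2(1))^{\Frob_q}$ (suitably interpreted, including generalized eigenvectors) with the order of vanishing of $L(T,E)$ at $T = 1/q$. The main obstacle — and the only genuinely delicate point — is bookkeeping: one must be careful about \emph{generalized} versus ordinary eigenspaces (a priori Frobenius need not act semisimply, so the $\Frob_q$-fixed subspace could be strictly smaller than the generalized eigenspace, but the inequality only needs the fixed subspace on the left and the generalized eigenspace on the right, so this direction is safe), and one must correctly strip off the part of $H^2$ coming from the zero section, a fiber, and the components of reducible fibers — this is exactly the Shioda--Tate decomposition, and it is reconciled with the definition of $L(T,E)$ via the numerical recipe for $N_E$ in Theorem~\ref{T:L-function fundamentals}. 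Since all of this is already done in \cite{Tate1964-1966}, the cleanest exposition is simply to attribute the inequality to the injectivity of Tate's map $h$, as indicated in the statement.
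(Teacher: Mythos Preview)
Your proposal is correct and follows exactly the same route as the paper: the paper's entire proof is the one-line observation that the inequality ``follows from the injectivity of the homomorphism $h$ of Theorem~5.2 of \cite{Tate1964-1966}.'' Your write-up is simply a (correct) unpacking of what that citation means via the elliptic surface and Shioda--Tate picture, including the careful remark about ordinary versus generalized eigenspaces, so there is nothing to add or change.
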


In this paper, we will use only the following special consequence of the above results.

\begin{corollary} \label{C:BSD main}
Suppose that $L(1/q,E) \neq 0$.  Then 
\[
L(1/q,E)\in q^{g-1+\chi_E}  c_E \cdot (\QQ^\times)^2.
\]
\end{corollary}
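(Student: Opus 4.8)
The plan is to combine the refined Birch and Swinnerton-Dyer formula (Conjecture~\ref{C:BSD}(\ref{C:BSD c}), known by Theorem~\ref{T:Artin-Tate-Milne}) with the observation that several of the quantities appearing in it are squares or lie in a controlled square class. First I would note that the hypothesis $L(1/q,E)\neq 0$, together with Proposition~\ref{P:rank inequality}, forces the rank $r$ of $E(K)$ to be $0$; hence the Mordell-Weil group $E(K)$ is finite, the regulator $R_E$ equals $1$ (empty determinant), and $|E(K)_{\tors}|=|E(K)|$. Applying Theorem~\ref{T:Artin-Tate-Milne} to pass from part (\ref{C:BSD a}) to part (\ref{C:BSD c}), the limit formula becomes an honest equality
\[
L(1/q,E) = \frac{|\Sha_E|\, c_E}{|E(K)|^2\cdot q^{g-1+\chi_E}}.
\]

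Next I would address the square classes of the individual factors. The denominator contributes $|E(K)|^2$, which is visibly a square, so it does not affect the square class. The factor $q^{g-1+\chi_E}$ must be moved to the numerator; since $q^{g-1+\chi_E}\cdot q^{g-1+\chi_E}=q^{2(g-1+\chi_E)}$ is a square, we have $q^{-(g-1+\chi_E)}\in q^{g-1+\chi_E}(\QQ^\times)^2$, so multiplying through shows $L(1/q,E)\in |\Sha_E|\,c_E\, q^{g-1+\chi_E}\cdot(\QQ^\times)^2$ up to the square class of $|\Sha_E|$. Here is where I would invoke the key input that $|\Sha_E|$ is a square: since $\Sha_E$ is finite (part of Theorem~\ref{T:Artin-Tate-Milne}), the Cassels--Tate pairing furnishes an alternating nondegenerate pairing on $\Sha_E$, forcing its order to be a perfect square. (This is the function-field analogue of the classical statement; I would cite the appropriate reference, or the one the paper uses elsewhere.) Therefore $|\Sha_E|\in(\QQ^\times)^2$ and drops out of the square class.

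Combining these observations yields $L(1/q,E)\in c_E\, q^{g-1+\chi_E}\cdot(\QQ^\times)^2$, which is exactly the claim. I expect the main obstacle to be purely expository rather than mathematical: one must be careful that the statement of Theorem~\ref{T:Artin-Tate-Milne} as invoked does indeed supply both the finiteness of $\Sha_E$ and the precise constant in the limit formula with the normalization matching \S\ref{SS:BSD} (in particular the $q^{g-1+\chi_E}$ factor and the fact that the regulator in the cited source differs from $R_E$ by $|E(K)_{\tors}|^2$, as flagged in the remark after Conjecture~\ref{C:BSD}). Once the bookkeeping of that normalization is pinned down, and the squareness of $|\Sha_E|$ is cited, the rest is the elementary square-class manipulation above.
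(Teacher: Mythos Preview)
Your proposal is correct and follows essentially the same argument as the paper: use Proposition~\ref{P:rank inequality} to get $r=0$, invoke Theorem~\ref{T:Artin-Tate-Milne} to obtain the exact BSD formula with $R_E=1$, and then discard $|E(K)|^2$ and $|\Sha_E|$ from the square class using the Cassels pairing. The paper's proof is identical in structure, only slightly terser in the square-class bookkeeping.
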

\begin{proof}
Let $r$ be the rank of $E(K)$.   Proposition~\ref{P:rank inequality} and our assumption $L(1/q,E)\neq 0$ implies that $r\leq 0$.  Therefore, $r=0$, i.e., $E(K)$ is  finite.   Since Conjecture~\ref{C:BSD}(\ref{C:BSD a}) holds, Theorem~\ref{T:Artin-Tate-Milne} implies that the group $\Sha_E$ is finite and that
\[
L(1/q,E)= \frac{|\Sha_E|\, \cdot c_E }{ |E(K)_{\tors}|^2\cdot q^{g-1+\chi_E}}
\]
(we have $R_E=1$ since $r=0$).  Therefore,
\[
L(1/q,E) \in |\Sha_E| \cdot q^{g-1+\chi_E} \cdot c_E \cdot  (\QQ^\times)^2.
\]
Cassels constructed an alternating and non-degenerate pairing $\Sha_E\times \Sha_E \to \QQ/\ZZ$, cf.~\cite{MR2261462}*{Theorem~6.13}, from which one can deduce that $\Sha_E$ has square cardinality (we are using of course that $\Sha_E$ is finite in our case).   The result is then immediate.
\end{proof}

\subsection{\texorpdfstring{$L$}{L}-functions modulo \texorpdfstring{$\ell$}{l}}  \label{SS:Lfunctions mod ell}

Take any prime $\ell \nmid 6q\calL_E$.    Let $E[\ell]$ be the $\ell$-torsion subscheme of $E$; it is a sheaf of $\FF_\ell$-modules on $U$ that is free of rank $2$.  The lisse sheaf $E[\ell]$ corresponds to a representation
\[
\rho_{E,\ell} \colon \pi_1(U,\bbar\eta) \to \Aut_{\FF_\ell}(E[\ell]_{\bbar\eta}) \cong \GL_2(\FF_\ell),
\]
where $\bbar\eta$ is a geometric generic point of $U$.   The Weil pairing $E[\ell] \times E[\ell] \to \FF_\ell(1)$ is non-degenerate and alternating, so $\rho_{E,\ell}(\pi_1(U_{\bbar{k}})) \subseteq \SL_2(\FF_\ell)$.   The following ``big monodromy'' result will be proved in \S\ref{SS:proof elliptic big monodromy}.

\begin{proposition}  \label{P:elliptic big monodromy}
We have $\rho_{E,\ell}(\pi_1(U_{\bbar{k}})) = \SL_2(\FF_\ell)$.
\end{proposition}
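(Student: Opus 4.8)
The plan is to follow Hall's reflection-group method in its rank-two form: show that $G:=\rho_{E,\ell}(\pi_1(U_{\kbar}))$ contains a transvection and acts irreducibly on $E[\ell]$, and then invoke Dickson's classification of subgroups of $\SL_2(\FF_\ell)$. The Weil pairing already puts $G$ inside $\SL_2(\FF_\ell)$, and Dickson's theorem says that a subgroup of $\SL_2(\FF_\ell)$ containing an element of order $\ell$ (such as a transvection) is either contained in a Borel subgroup --- and hence reducible --- or equal to $\SL_2(\FF_\ell)$: when $\ell\geq 5$ the dihedral, $A_4$, $S_4$ and $A_5$ possibilities contain no element of order $\ell$ unless their image in $\PSL_2(\FF_\ell)$ exhausts $\PSL_2(\FF_\ell)$, and then $G=\SL_2(\FF_\ell)$ because $\SL_2(\FF_\ell)$ has no subgroup of index $2$. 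So it suffices to produce a transvection in $G$ and to rule out a $G$-stable line in $E[\ell]$.

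For the transvection: since $j_E\notin k$, there is a closed point $x_0$ of $C$ with $v_{x_0}(j_E)<0$, so $E$ has potentially multiplicative reduction at $x_0$. By the theory of the Tate curve (cf.\ \cite{MR1312368}), the image of the inertia group at $x_0$ in $\GL_2(\FF_\ell)$ contains a unipotent element that is nontrivial precisely when $\ell\nmid v_{x_0}(j_E)$ --- namely a generator of the local monodromy when the reduction is genuinely multiplicative, and the square of such a generator otherwise. The hypothesis $\ell\nmid\calL_E$ forces $\ell\nmid v_{x_0}(j_E)$, for otherwise $\ell$ would be a prime $\geq 5$ dividing $\max\{1,-v_{x_0}(j_E)\}$; hence $G$ contains a nontrivial unipotent element, i.e.\ a transvection.

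Ruling out a $G$-stable line is the step I expect to require the most care. Suppose $E[\ell]$ had a $\pi_1(U_{\kbar})$-stable line; since $\ell$ is invertible on $U$, this gives a subgroup scheme of order $\ell$ of $E[\ell]$ over $U_{\kbar}$, hence a family of elliptic curves with a cyclic $\ell$-isogeny over $U_{\kbar}$, and therefore a non-constant morphism $U_{\kbar}\to Y_0(\ell)_{\kbar}$ whose composite with the forgetful map $Y_0(\ell)\to Y(1)$ is $j_E$. By the valuative criterion of properness it extends to a non-constant --- hence surjective --- morphism $\varphi\colon C_{\kbar}\to X_0(\ell)_{\kbar}$ lying over $j_E\colon C_{\kbar}\to X(1)_{\kbar}$. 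Choosing a point $x$ of $C_{\kbar}$ over the cusp of $X_0(\ell)$ of width $\ell$ --- the cusp over which $X_0(\ell)\to X(1)$ is ramified with index $\ell$ --- we find that $j_E$ has a pole at $x$ of order divisible by $\ell$, so $\ell\mid v_x(j_E)$ and therefore $\ell\mid\calL_E$, contradicting the hypothesis. Hence $G$ acts irreducibly, and together with the transvection produced above and Dickson's classification this yields $G=\SL_2(\FF_\ell)$.
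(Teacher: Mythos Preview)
Your proof is correct. Both your argument and the paper's ultimately rest on the same contradiction: if the geometric monodromy were a proper subgroup, one could factor the $j$-map through a modular curve and deduce that some pole of $j_E$ has order divisible by~$\ell$, contradicting $\ell\nmid\calL_E$. The routes to that contradiction, however, differ.

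The paper treats every proper subgroup $H\subsetneq\SL_2(\FF_\ell)$ at once: it passes to the modular curve $X_H=X(\ell)/H$ and invokes Wohlfahrt's result that the ``general level'' (the lcm of cusp widths) of a congruence subgroup equals its level, so for any such $H$ the lcm of the ramification indices of $X_H\to X(1)$ over the cusp is exactly~$\ell$. No classification of subgroups and no local analysis at places of bad reduction is needed.

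You instead split the problem: first produce a transvection directly from the Tate parametrization at a place with $v_x(j_E)<0$, then use Dickson's theorem to reduce the only remaining obstruction to the Borel case, and finally handle that case with $X_0(\ell)$, whose cusp of width~$\ell$ is elementary. Your approach trades the appeal to Wohlfahrt's theorem for two standard inputs (the Tate curve and Dickson), and has the advantage that the modular curve you need is the most familiar one. The paper's approach is more uniform and avoids any case analysis of subgroups of $\SL_2(\FF_\ell)$.
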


Pushing forward, we obtain a sheaf $j_*(E[\ell])$ of $\FF_\ell$-modules on $C$, where $j\colon U \to C$ is the inclusion morphism.   Define the $\FF_\ell$-vector space 
\[
V_{E,\ell} := H^1(C_{\bbar{k}}, j_*(E[\ell])).
\]
The Weil pairing $E[\ell] \times E[\ell] \to \FF_\ell(1)$ is non-degenerate and alternating, and  gives rise to an isomorphism $E[\ell]^\vee(1) \cong E[\ell]$ of sheaves.  Using this isomorphism and Poincar\'e duality (for example, as in \cite{MR559531}*{V Proposition~2.2(b)}), we obtain a non-degenerate and symmetric pairing
\[
\ang{\,}{\,}\colon V_{E,\ell} \times V_{E,\ell} \to H^2(C_{\bbar{k}}, \FF_\ell(1)) \cong \FF_\ell.
\]
Therefore, $V_{E,\ell}$ with the pairing $\ang{\,}{\,}$ is an orthogonal space over $\FF_\ell$.

There is a natural action of $\Gal(\bbar{k}/k)$ on the vector space $V_{E,\ell}$.   The Galois action on $V_{E,\ell}$ respects the pairing and hence gives rise to a representation
\[
\theta_{E,\ell}\colon \Gal(\bbar{k}/k) \to \Or(V_{E,\ell}).
\]
Let $\Frob_q \in\Gal(\bbar{k}/k)$ be the \emph{geometric} Frobenius (i.e., the inverse of $x\mapsto x^q$).   The following says that we can recover $L(T,E)$ modulo $\ell$ from the characteristic polynomial of $\theta_{E,\ell}(\Frob_q)$.

\begin{proposition} \label{P:main theta mod ell}
The vector space $V_{E,\ell}$ has dimension $N_E$ over $\FF_\ell$ and 
\[
\det(I - \theta_{E,\ell}(\Frob_q)T) \equiv L(T/q,E) \pmod{\ell}.
\]
\end{proposition}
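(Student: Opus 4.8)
This is the standard cohomological description of the $L$-function of an elliptic curve over a global function field, read off modulo $\ell$; the plan is to run Grothendieck's argument while carefully tracking the local contributions at the places of bad reduction and the Tate twist that separates the weight of $L(T,E)$ from the weight-zero normalization built into $V_{E,\ell}$. Throughout write $\mathcal{F}:=j_*(E[\ell])$, a constructible $\FF_\ell$-sheaf on $C$; its stalk at a geometric point $\bbar{x}$ over a closed point $x$ is $E[\ell]_{\bbar\eta}$ if $E$ has good reduction at $x$, and in general is the inertia invariants $(E[\ell]_{\bbar\eta})^{I_x}$.

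First I would invoke the Grothendieck--Lefschetz trace formula for $\mathcal{F}$ on the proper curve $C/k$, cf.\ \cite{MR559531}:
\[
\prod_{x\in|C|}\det\bigl(I-\Frob_x\,T^{\deg x}\mid \mathcal{F}_{\bbar{x}}\bigr)^{-1}\;=\;\prod_{i=0}^{2}\det\bigl(I-\Frob_q\,T\mid H^i(C_{\bbar{k}},\mathcal{F})\bigr)^{(-1)^{i+1}}.
\]
Next I would show that $H^0(C_{\bbar k},\mathcal F)$ and $H^2(C_{\bbar k},\mathcal F)$ vanish, so the right-hand side collapses to $\det(I-\theta_{E,\ell}(\Frob_q)T)$. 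For $H^0$: since $j_*$ preserves global sections, $H^0(C_{\bbar k},\mathcal F)=H^0(U_{\bbar k},E[\ell])=(E[\ell]_{\bbar\eta})^{\pi_1(U_{\bbar k})}$, which is $0$ because $\rho_{E,\ell}(\pi_1(U_{\bbar k}))=\SL_2(\FF_\ell)$ by Proposition~\ref{P:elliptic big monodromy} and this group fixes no nonzero vector of $\FF_\ell^2$. For $H^2$: the quotient $j_*(E[\ell])/j_!(E[\ell])$ is supported on the finite set $C\setminus U$ and so has no higher cohomology, whence $H^2(C_{\bbar k},\mathcal F)\cong H^2(C_{\bbar k},j_!(E[\ell]))=H^2_c(U_{\bbar k},E[\ell])$; Poincar\'e duality on the smooth curve $U$, together with the self-duality $E[\ell]^\vee(1)\cong E[\ell]$ furnished by the Weil pairing, identifies this with $H^0(U_{\bbar k},E[\ell])^\vee=0$.

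The heart of the proof is the local factor at each closed point $x$:
\[
\det\bigl(I-\Frob_x\,T^{\deg x}\mid \mathcal{F}_{\bbar{x}}\bigr)\equiv L_x(T/q)\pmod{\ell};
\]
multiplying over $x$ and combining with the previous step gives $\det(I-\theta_{E,\ell}(\Frob_q)T)\equiv\prod_x L_x(T/q)^{-1}=L(T/q,E)\pmod{\ell}$. At a place of good reduction, $\Frob_x$ acts on $\mathcal F_{\bbar x}=E_x[\ell]$ as the inverse of the $q^{\deg x}$-power Frobenius endomorphism of $E_x$, whose characteristic polynomial is $T^2-a_x(E)T+q^{\deg x}$; the left-hand side is therefore $1-a_x(E)q^{-\deg x}T^{\deg x}+q^{-\deg x}T^{2\deg x}=L_x(T/q)$. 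At a place of multiplicative reduction, the hypothesis $\ell\nmid\calL_E$ (so $\ell\nmid\lambda_x(E)$) together with Tate's uniformization shows that $\mathcal F_{\bbar x}$ is one-dimensional with $\Frob_x$ acting by $q^{-\deg x}$ or $-q^{-\deg x}$ according as the reduction is split or non-split, so the left-hand side is $1-a_x(E)q^{-\deg x}T^{\deg x}=L_x(T/q)$. At a place of additive reduction, $a_x(E)=0$ and $L_x(T/q)=1$, so one must check $\mathcal F_{\bbar x}=(E[\ell]_{\bbar\eta})^{I_x}=0$: for potentially good reduction this follows because $\ell\nmid 6$ forces $I_x$ to act through a cyclic group of order $e\in\{2,3,4,6\}$ prime to $\ell$ by an element having no eigenvalue $1$, and for type $\I_n^*$ the inertia acts through an element of the form $-u$ with $u$ unipotent, which again fixes no nonzero vector.

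Finally I would determine $\dim_{\FF_\ell}V_{E,\ell}$. As $H^0=H^2=0$, this equals $-\chi(C_{\bbar k},\mathcal F)$, and the exact sequence $0\to j_!E[\ell]\to\mathcal F\to(\text{skyscraper})\to 0$ gives $\chi(C_{\bbar k},\mathcal F)=\chi_c(U_{\bbar k},E[\ell])+\sum_{x\notin U}(\deg x)\,\dim(E[\ell]_{\bbar\eta})^{I_x}$. The Grothendieck--Ogg--Shafarevich formula gives $\chi_c(U_{\bbar k},E[\ell])=2\bigl(2-2g-\sum_{x\notin U}\deg x\bigr)$, all Swan conductors vanishing since $\ell\neq\Char k$, and inserting the local invariant dimensions from the previous step ($1$ at multiplicative places, where $f_x=1$, and $0$ at additive places, where $f_x=2$) yields $\dim_{\FF_\ell}V_{E,\ell}=-4+4g+\sum_{x\notin U}f_x(E)\deg x=N_E$. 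Theorem~\ref{T:L-function fundamentals} then drops out: $L(T,E)\equiv\det(I-\theta_{E,\ell}(\Frob_q)\,qT)\pmod{\ell}$ for every $\ell\nmid 6q\calL_E$ forces $L(T,E)$ to be a polynomial of degree $N_E$, and its functional equation, with $\varepsilon_E=(-1)^{N_E}\det\theta_{E,\ell}(\Frob_q)$, follows from the orthogonality of $\theta_{E,\ell}(\Frob_q)$. \emph{The step I expect to be the main obstacle is the bad-reduction analysis in the third paragraph}: pinning down the inertia-invariant stalk $(E[\ell]_{\bbar\eta})^{I_x}$ --- its dimension and the Frobenius action on it --- at each Kodaira type, via Tate curves at the multiplicative places and the classification of potentially good and potentially multiplicative reduction at the additive places. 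It is exactly here that the standing hypothesis $\Char k\notin\{2,3\}$ and the coprimality condition $\ell\nmid 6q\calL_E$ are needed; for instance, dropping $\ell\nmid\calL_E$ would make the stalk at a multiplicative place of conductor exponent divisible by $\ell$ two-dimensional, breaking both the local factor identity and the dimension count.
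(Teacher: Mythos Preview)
Your argument is correct and proves Proposition~\ref{P:main theta mod ell}, but it is organized differently from the paper's proof. The paper first runs the Grothendieck--Lefschetz argument $\ell$-adically, with the sheaf $j_*(T_\ell(E))\otimes_{\ZZ_\ell}\QQ_\ell$, to identify $L(T/q,E)$ with $\det(I-\Frob_q T\mid M\otimes_{\ZZ_\ell}\QQ_\ell)$ where $M=H^1(C_{\bbar k},j_*(T_\ell(E)))$; it then uses the short exact sequence $0\to T_\ell(E)\xrightarrow{\times\ell}T_\ell(E)\to E[\ell]\to 0$ together with the vanishing of $H^0$ and $H^2$ to show that $M$ is $\ZZ_\ell$-free with $M/\ell M\cong V_{E,\ell}$, and the congruence follows by reduction. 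The advantage of that route is that the local identity $\det(I-\Frob_x T^{\deg x}\mid\calF^\vee_{\bbar x})=L_x(T)$ can simply be quoted from the standard $\ell$-adic description of the $L$-function, so no Kodaira-type case analysis is needed; the cost is the extra step establishing that $M$ is torsion-free. Your route trades that step for the explicit computation of the inertia invariants $(E[\ell]_{\bbar\eta})^{I_x}$ and the Frobenius action on them at each reduction type, which you carry out correctly and which is exactly the obstacle you flagged.

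One slip to fix: the Swan conductors in your Grothendieck--Ogg--Shafarevich step vanish not ``since $\ell\neq\Char k$'' (that hypothesis only makes the formalism available) but because $E[\ell]$ is \emph{tamely} ramified at every place, which is a consequence of the standing assumption $\gcd(q,6)=1$. In residue characteristic $2$ or $3$ one can have wild ramification of $E[\ell]$ even though $\ell\neq p$, and the Swan terms would then be nonzero. The paper invokes tameness for exactly this reason when it computes $\mathfrak f_x=2-\dim_{\FF_\ell} j_*(E[\ell])_{\bbar x}$.
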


In many cases, the following proposition gives a way to compute the determinant and the spinor norm of $\theta_{E,\ell}(\Frob_q)$ in terms of some of the invariants of $E$.   Our assumption $\ell \nmid 6q \calL_E$ ensures that $\ell$ does not divide the integer $2^{N_E} q^{g-1+\chi_E} c_E \gamma_E$ (if $c_x(E)$ is divisible by a prime $p\geq 5$ for some $x\in|C|$, then Tate's algorithm shows that $E$ must have Kodaira symbol $\I_n$ at $x$ where $n=c_x(E)$, and hence $p$ divides $\calL_E$).

\begin{proposition} \label{P:theta comp}  
\begin{romanenum}
\item \label{P:theta comp i}
We have $\det(\theta_{E,\ell}(\Frob_q)) = (-1)^{N_E}\cdot \varepsilon_E$.
\item \label{P:theta comp ii}
If $\det(I-\theta_{E,\ell}(\Frob_q))\neq 0$, then $\spin(-\theta_{E,\ell}(\Frob_q)) = 2^{N_E} q^{g-1+\chi_E} \,c_E \cdot (\FF_\ell^\times)^2$.
\item \label{P:theta comp iii}
If $\det(I+\theta_{E,\ell}(\Frob_q))\neq 0$, then $\spin(\theta_{E,\ell}(\Frob_q)) = 2^{N_E} q^{g-1+\chi_E} c_E \cdot\gamma_E \cdot (\FF_\ell^\times)^2$.
\item \label{P:theta comp iv}
If $\det(I\pm \theta_{E,\ell}(\Frob_q))\neq 0$, then $\disc(V_{E,\ell}) = \gamma_E \cdot (\FF_\ell^\times)^2$.
\end{romanenum}
\end{proposition}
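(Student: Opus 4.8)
The plan is to deduce all four parts from the compatibility relation in Proposition~\ref{P:main theta mod ell}, the functional equation of Theorem~\ref{T:L-function fundamentals}, the Birch and Swinnerton-Dyer consequence Corollary~\ref{C:BSD main}, and Zassenhaus's formula relating spinor norms of orthogonal matrices to determinants of the form $\det(I\mp A)$. The key observation is that part~(\ref{P:theta comp i}) is essentially formal, parts~(\ref{P:theta comp ii}) and (\ref{P:theta comp iii}) come from evaluating the $L$-function at $T=q^{-1}$ and $T=-q^{-1}$ respectively, and part~(\ref{P:theta comp iv}) follows by combining (\ref{P:theta comp ii}) and (\ref{P:theta comp iii}) with $\disc(V_{E,\ell}) = \spin(-I)$. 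Because the relation $\det(I-\theta_{E,\ell}(\Frob_q)T)\equiv L(T/q,E)\pmod \ell$ is an identity in $\FF_\ell[T]$ and our hypothesis $\ell\nmid 6q\calL_E$ forces $\ell$ to be prime to $2^{N_E} q^{g-1+\chi_E}c_E\gamma_E$, each integer square class appearing on the arithmetic side survives modulo $\ell$ and may legitimately be compared with a spinor norm.

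First I would prove (\ref{P:theta comp i}). Setting $T=0$ after differentiating, or more simply comparing leading coefficients: $L(T,E)$ has degree $N_E$ by Theorem~\ref{T:L-function fundamentals}, so $L(T/q,E)$ has leading coefficient (up to the known normalization) determined by the functional equation $T^{N_E}L(T^{-1}/q,E)=\varepsilon_E L(T/q,E)$. On the representation side, since $\theta_{E,\ell}(\Frob_q)\in\Or(V_{E,\ell})$ we have the elementary identity $T^{N_E}\det(I-T^{-1}A)=\det(-A)\det(I-TA)$ with $A=\theta_{E,\ell}(\Frob_q)$ and $N_E=\dim V_{E,\ell}$. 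Matching this with the functional equation modulo $\ell$ gives $\det(-A)\equiv\varepsilon_E\pmod\ell$, hence $\det(A)=(-1)^{N_E}\varepsilon_E$ exactly, since both sides are $\pm1$ and $\ell\geq 5$.

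Next, for (\ref{P:theta comp ii}), I would evaluate the congruence of Proposition~\ref{P:main theta mod ell} at $T=q^{-1}$, wait---at $T=1$ in the variable $T$, so that $L(T/q,E)$ is evaluated at $T=1$, i.e. $L(1/q,E)$, and the left side becomes $\det(I-\theta_{E,\ell}(\Frob_q))$. The hypothesis $\det(I-\theta_{E,\ell}(\Frob_q))\neq 0$ in $\FF_\ell$ then forces $L(1/q,E)\not\equiv 0\pmod\ell$, in particular $L(1/q,E)\neq 0$, so Corollary~\ref{C:BSD main} applies and gives $L(1/q,E)\in q^{g-1+\chi_E}c_E\cdot(\QQ^\times)^2$. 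Zassenhaus's formula (for $A\in\Or(V)$ with $\det(I-A)\neq 0$) states $\spin(-A)=\det(I-A)\cdot(\FF_\ell^\times)^2$ up to a controlled power of $2$; tracking the power of $2$ carefully in terms of $\dim V=N_E$ yields $\spin(-\theta_{E,\ell}(\Frob_q))=2^{N_E}\det(I-\theta_{E,\ell}(\Frob_q))\cdot(\FF_\ell^\times)^2 = 2^{N_E}L(1/q,E)\cdot(\FF_\ell^\times)^2 = 2^{N_E}q^{g-1+\chi_E}c_E\cdot(\FF_\ell^\times)^2$. Part~(\ref{P:theta comp iii}) is entirely parallel: evaluate at $T=-1$, so the left side is $\det(I+\theta_{E,\ell}(\Frob_q))$ and the right side is $L(-1/q,E)$; the functional equation together with $\det(\theta_{E,\ell}(\Frob_q))$ from (\ref{P:theta comp i}) relates $L(-1/q,E)$ back to $L(1/q,E)$ up to the factor $\gamma_E$ (this is exactly where the invariant $\gamma_E=\prod_x\gamma_x(E)^{\deg x}$, which I would need to identify from Tate's algorithm as encoding the local discrepancy between the two evaluations, enters), so $\spin(\theta_{E,\ell}(\Frob_q))=2^{N_E}q^{g-1+\chi_E}c_E\gamma_E\cdot(\FF_\ell^\times)^2$. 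Finally~(\ref{P:theta comp iv}): under either hypothesis $\det(I\pm\theta_{E,\ell}(\Frob_q))\neq 0$ we have $\theta_{E,\ell}(\Frob_q)$ and hence also $-\theta_{E,\ell}(\Frob_q)$ without eigenvalue $\pm1$ as needed, and $\disc(V_{E,\ell})=\spin(-I)=\spin(-A)\cdot\spin(A)^{-1}$; dividing the formulas of (\ref{P:theta comp ii}) and (\ref{P:theta comp iii}) collapses the common factor $2^{N_E}q^{g-1+\chi_E}c_E$ and leaves $\disc(V_{E,\ell})=\gamma_E\cdot(\FF_\ell^\times)^2$ (in the case only one of $\det(I\pm A)\neq 0$ holds, one instead compares the single available spinor norm against the independently computed $\disc(V_{E,\ell})$, which is why the statement is phrased with $\pm$).

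The main obstacle I expect is the careful bookkeeping of the power of $2$ in Zassenhaus's formula and, relatedly, pinning down that the $\gamma_E$-factor emerging from the functional-equation comparison at $T=-1$ really is $\prod_x\gamma_x(E)^{\deg x}$ with the $\gamma_x$ as tabulated in~\S\ref{SS:some invariants}. This is a local computation at each bad fiber: one must expand $L_x(-T)/L_x(T)$ (or the analogous local contribution to $L(-1/q,E)/L(1/q,E)$) and match the resulting square class against the Kodaira-type-by-Kodaira-type values of $\gamma_x$, using Tate's algorithm to read off the component groups. Everything else is formal manipulation of the compatibility congruence and the functional equation together with the already-established Corollary~\ref{C:BSD main}.
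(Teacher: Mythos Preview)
Your arguments for parts~(\ref{P:theta comp i}), (\ref{P:theta comp ii}) and (\ref{P:theta comp iv}) match the paper's proof essentially verbatim: functional equation plus orthogonality for (\ref{P:theta comp i}), Zassenhaus plus Corollary~\ref{C:BSD main} for (\ref{P:theta comp ii}), and multiplying spinor norms for (\ref{P:theta comp iv}). (One small misreading: the hypothesis $\det(I\pm A)\neq 0$ in (\ref{P:theta comp iv}) means \emph{both} are nonzero, so your parenthetical about ``only one'' holding is unnecessary.)

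The gap is in part~(\ref{P:theta comp iii}). The functional equation $T^{N_E}L(T^{-1}/q,E)=\varepsilon_E L(T/q,E)$ does \emph{not} relate $L(-1/q,E)$ to $L(1/q,E)$: setting $T=-1$ merely recovers the consistency $(-1)^{N_E}=\varepsilon_E$ when $L(-1/q,E)\neq 0$, which is (\ref{P:theta comp i}) again. Nor does a local computation of $L_x(-T)/L_x(T)$ salvage this, since at a place $x$ of good reduction with $\deg x$ odd the ratio $L_x(-1/q)/L_x(1/q)=(2+a_x/q^{\deg x})/(2-a_x/q^{\deg x})$ has no controlled square class; the good places genuinely contribute and cannot be ignored.

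The paper's mechanism is different and you have not identified it: one introduces the quadratic twist $E'$ of $E$ by a non-square $\beta\in k^\times$. A direct check gives $a_x(E')=(-1)^{\deg x}a_x(E)$ for every closed point $x$, so $L(T,E')=L(-T,E)$ and in particular $L(1/q,E')=L(-1/q,E)$. Now Corollary~\ref{C:BSD main} applied to $E'$ yields $L(-1/q,E)\in q^{g-1+\chi_{E'}}c_{E'}\cdot(\QQ^\times)^2$. Since $E$ and $E'$ have the same Kodaira symbols one gets $\chi_{E'}=\chi_E$, and the genuine local work is the Tate-algorithm verification that $c_x(E)c_x(E')\gamma_x(E)^{\deg x}$ is a perfect square at every $x$ (Lemma~\ref{L:technical gamma} in the paper), whence $c_{E'}\in c_E\gamma_E\cdot(\QQ^\times)^2$. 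That is where $\gamma_E$ actually enters: through the comparison of Tamagawa numbers of $E$ and its constant-field quadratic twist, not through any $L$-factor ratio or functional-equation manipulation.
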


The proofs of Propositions~\ref{P:main theta mod ell} and \ref{P:theta comp} will be given in \S\ref{SS:proof theta mod ell} and \S\ref{SS:proof of theta comp}, respectively.  

\begin{remark} \label{R:alternate Vs}
Let $\calE[\ell]$ be the $\ell$-torsion subscheme of the N\'eron model $\calE\to C$; it is a sheaf of $\FF_\ell$-modules on $C$.   For any non-empty open subvariety $U'$ of $C$, one can show that $\calE[\ell]$ is canonically isomorphic to $j'_* j'^*(\calE[\ell])$, where $j'\colon U'\hookrightarrow C$ is the inclusion morphism.    In particular with $U'=U$, we find that $V_{E,\ell}=H^1(C_{\bbar{k}}, \calE[\ell])$.    If $U' \subseteq U$, then we have $V_{E,\ell} = H^1(C_{\bbar{k}}, j'_*(E[\ell]|_{U'}))$.
\end{remark}

\begin{remark} 
Let $X\to C$ be a relatively minimal morphism extending $E\to U$, where $X$ is a smooth and projective surface over $k$.   One can give a filtration of $H^2(X,\FF_\ell(1))$ as an $\FF_\ell[\Gal(\bbar{k}/k)]$-module such that one of the quotients is $V_{E,\ell}$ (and the cup pairing on $H^2$ induces our pairing on $V_{E,\ell}$).   Similar remarks hold for the more general constructions of \S\ref{S:twists}.
\end{remark}

\subsection{Proof of Proposition~\ref{P:elliptic big monodromy}} \label{SS:proof elliptic big monodromy}

Take any proper subgroup $H$ of $\SL_2(\ZZ/\ell\ZZ)$.  For a fixed algebraically closed field $F$ whose characteristic is not $\ell$, let $X(\ell)$ be the modular curve over $F$ parametrizing elliptic curves with level $\ell$-structure; it is smooth and projective.   There is a natural action of $\SL_2(\ZZ/\ell\ZZ)$ on $X(\ell)$.  Define the curve $X_H=X(\ell)/H$ and let $\pi_H\colon X_H\to X(\ell)/\SL_2(\ZZ/\ell\ZZ)\cong\PP^1_F$ be the morphism down to the $j$-line.   Let $m$ be the least common multiple of the order of the poles of $\pi_H$. 

We claim that $m=\ell$.   There is a model of the modular curve $X_H$ over $\Spec \ZZ[1/\ell]$ such that the the divisor consisting of cusps is \'etale over $\Spec \ZZ[1/\ell]$, for background see \S3 in part IV of \cite{MR0337993}.   The integer $m$ is thus independent of $F$, so we may take $F=\CC$.   Let $\Gamma$ be the congruence subgroup consisting of matrices $A\in \SL_2(\ZZ/\ell\ZZ)$ such that $A$ modulo $\ell$ belongs to $H$.   The map $\pi_H\colon X_H(\CC)\to \PP^1(\CC)$ of compact Riemann surfaces comes from compactifying the natural quotient map $\mathfrak{h}/\Gamma \to \mathfrak{h}/\SL_2(\ZZ)$, where $\SL_2(\ZZ)$ acts on the upper-half plane $\mathfrak{h}$ via linear fractional transformations.   Therefore, $m$ is equal to the least common multiple of the width of the cusps of $\Gamma$.   Since $\Gamma$ has level $\ell$, we have $m=\ell$  (in \cite{MR0167533}, the quantity $m$ is called the ``general level'' of $\Gamma$ and it is shown to agree with the usual level).

We now focus on the case $F=\bbar{k}$.  Suppose that $H=\rho_{E,\ell}(\pi_1(U_{\bbar{k}}))$ is a proper subgroup of $\SL_2(\ZZ/\ell\ZZ)$.    Let $J\colon U_{\bbar{k}} \to \Aff^1_{\bbar{k}}$ be the morphism given by the $j$-invariant of $E$; it is dominant since $E$ is non-isotrivial.     Since $\rho_{E,\ell}(\pi_1(U_{\bbar{k}}))\subseteq H$, the morphism $J$ factors as 
\[
U_{\bbar{k}} \to X_H \xrightarrow{\pi_H} \Aff^1_{\bbar{k}}.
\]     
Let $m'$ be the least common multiple of the order of the poles of the morphism $C_{\bbar{k}}\to \PP^1_{\bbar{k}}$ extending $J$.  The integer $m'$ is divisible by $m$; the least common multiple of the order of the poles of $\pi_H$.   By our claim, $m'$ is divisible by $\ell$.  However, $\ell$ dividing $m'$ implies that there is a closed point $x$ of $C$ such that $v_x(j_E)$ is negative and divisible by our prime $\ell\geq 5$; this in turn implies that $\ell$ divides $\calL_E$.   This contradicts our ongoing assumption that $\ell\nmid \calL_E$, so $H=\SL_2(\ZZ/\ell\ZZ)$ as desired.

\subsection{Proof of Proposition~\ref{P:main theta mod ell} and Theorem~\ref{T:L-function fundamentals}}  \label{SS:proof theta mod ell}

We first recall a cohomological description of $L(T,E)$.    For each integer $n\geq 1$, let $E[\ell^n]$ be the $\ell^n$-torsion subscheme of $E$; it is a lisse sheaf of $\ZZ/\ell^n\ZZ$-modules on $U$ that is free of rank $2$.    The sheaves $\{E[\ell^n]\}_{n\geq 1}$ with the multiplication by $\ell$ morphism $E[\ell^{n+1}]\to E[\ell^n]$ form a lisse sheaf of $\ZZ_\ell$-modules on $U$ which we denote by $T_\ell(E)$.    Define the $\QQ_\ell$-sheaf $\calF:=j_*(T_\ell(E))\otimes_{\ZZ_\ell} \QQ_\ell$, where $j\colon U\hookrightarrow C$ is the inclusion morphism, and let $\calF^\vee$ be its dual.  

Take any closed point $x$ of $C$ and let $\bbar{x}$ be a geometric point of $C$ mapping to $x$ arising from a choice of algebraic closure $\FFbar_x$ of $\FF_x$.   The geometric Frobenius $\Frob_x \in\Gal(\FFbar_x/\FF_x)$ acts on the fibers $\calF_{\bbar{x}}$ and $\calF^\vee_{\bbar{x}}$.    One can show that
\[
\det(I - \Frob_x T^{\deg x} \, |\, \calF^\vee_{\bbar{x}})=L_x(T).
\]   
The Weil pairings give an isomorphism $\calF^\vee\cong \calF(-1)$, and hence $\det(I - \Frob_x T^{\deg x} \, |\, \calF_{\bbar{x}}) = L_x(T/q)$.  Therefore, 
\[
L(T/q,E) = \prod_{x\in |C|} \det(I - \Frob_x T^{\deg x} \, |\, \calF_{\bbar{x}})^{-1}.
\]  
By the Grothendieck-Lefschetz trace formula, we have
\[
L(T/q,E)= {\prod}_{i} \det\big(I - \Frob_q T \,|\, H^i(C_{\bbar{k}}, \calF) \big)^{(-1)^{i+1}}.
\]
Lemma~\ref{L:Hi vanishing}(\ref{L:Hi vanishing ii}) below then shows that $L(T/q,E)$ is equal to the polynomial 
\[
\det(I-\Frob_q T \,| \, M \otimes_{\ZZ_\ell} \QQ_\ell),   
\]
where $M$ is the $\ZZ_\ell$-module $H^1(C_{\bbar{k}},j_*(T_\ell(E)))$.  That the polynomial $L(T,E)$ has integer coefficients is clear from its power series definition.

\begin{lemma} \label{L:Hi vanishing}
Take any integer $i\neq 1$.
\begin{romanenum} 
\item \label{L:Hi vanishing i}
We have $H^i(C_{\bbar{k}},j_*(E[\ell^n]))=0$ for all $n\geq 1$.   
\item \label{L:Hi vanishing ii}
We have $H^i(C_{\bbar{k}},j_*(T_\ell(E)))=0$ and $H^i(C_{\bbar{k}},\calF)=0$.
\end{romanenum}
\end{lemma}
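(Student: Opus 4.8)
The plan is to reduce everything to the first assertion, part~(\ref{L:Hi vanishing i}), about the torsion sheaves $j_*(E[\ell^n])$, and then pass to the limit. For part~(\ref{L:Hi vanishing ii}), once we know $H^i(C_{\bbar k}, j_*(E[\ell^n])) = 0$ for all $n$ and all $i \neq 1$, the vanishing of $H^i(C_{\bbar k}, j_*(T_\ell(E)))$ follows by taking the inverse limit over $n$: the transition maps on $H^{i-1}$ and $H^{i+1}$ give a $\varprojlim^1$ term, but since we are working with finite groups (or with the Mittag-Leffler condition coming from the profinite structure) one checks the relevant $\varprojlim^1$ vanishes, and $H^i$ of the limit sheaf is the limit of the $H^i$'s, hence zero. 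Tensoring with $\QQ_\ell$ then gives $H^i(C_{\bbar k}, \calF) = 0$ for $i \neq 1$. So the substance is entirely in part~(\ref{L:Hi vanishing i}).

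For part~(\ref{L:Hi vanishing i}), fix $n \geq 1$ and write $\calG = E[\ell^n]$, a lisse sheaf on $U$, and $\calH = j_*\calG$ on $C$. First I would dispose of $i < 0$ and $i > 2$: the curve $C_{\bbar k}$ has cohomological dimension $2$ for torsion sheaves, so $H^i = 0$ automatically for $i \geq 3$, and $H^i = 0$ for $i < 0$ trivially. This leaves $i = 0$ and $i = 2$. For $i = 0$: $H^0(C_{\bbar k}, j_*\calG) = H^0(U_{\bbar k}, \calG) = \calG_{\bbar\eta}^{\pi_1(U_{\bbar k})}$, the invariants under the geometric monodromy. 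By Proposition~\ref{P:elliptic big monodromy} the image $\rho_{E,\ell}(\pi_1(U_{\bbar k}))$ is all of $\SL_2(\FF_\ell)$ (and more generally surjects onto $\SL_2(\ZZ/\ell^n\ZZ)$ — I would need to either invoke the $\ell^n$ version of big monodromy or deduce it from the mod-$\ell$ statement by the standard lifting argument for $\SL_2$, using $\ell \geq 5$), which has no nonzero invariants on $(\ZZ/\ell^n\ZZ)^2$; hence $H^0 = 0$.

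For $i = 2$: here I would use Poincaré duality on the curve $C_{\bbar k}$. Since $\calG$ is self-dual up to a Tate twist via the Weil pairing, $j_*\calG$ is (up to twist) isomorphic to its own dual in the appropriate sense, and $H^2(C_{\bbar k}, j_*\calG)$ is dual to $H^0(C_{\bbar k}, (j_*\calG)^\vee(1))$, which by the same monodromy argument as above is zero. Alternatively, and perhaps more cleanly, I would use that $j_*\calG$ has no nonzero punctual quotients (it is the "middle extension"), so its $H^2$ — which by duality sees the coinvariants of the generic stalk under monodromy — vanishes for the same reason the invariants do. The \textbf{main obstacle} I anticipate is the bookkeeping around Poincaré duality for the middle-extension sheaf $j_*\calG$ on a possibly non-proper picture and making sure the self-duality isomorphism $E[\ell^n]^\vee(1) \cong E[\ell^n]$ is compatible with pushforward along $j$ and with the cup-product pairing; this is exactly the point invoked via \cite{MR559531}*{V Proposition~2.2(b)} in the construction of the orthogonal pairing on $V_{E,\ell}$, so I would lean on that reference. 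The lifting of big monodromy from $\SL_2(\FF_\ell)$ to $\SL_2(\ZZ/\ell^n\ZZ)$ is a minor secondary point, standard for $\ell \geq 5$ since $\SL_2(\FF_\ell)$ has no nontrivial extension problems of the relevant type.
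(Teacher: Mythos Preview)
Your proposal is correct and matches the paper's proof essentially line for line: the paper also reduces to $i\in\{0,2\}$ by cohomological dimension, handles $i=2$ via Poincar\'e duality and the Weil-pairing self-duality $E[\ell^n]^\vee(1)\cong E[\ell^n]$ (citing exactly the reference you anticipated, \cite{MR559531}*{V~Proposition~2.2(b)}), and handles $i=0$ by big monodromy, lifting from $\SL_2(\FF_\ell)$ to $\SL_2(\ZZ/\ell^n\ZZ)$ via the standard lemma (the paper cites Serre, Lemma~2 on p.~IV-23 of \cite{MR0263823}). The paper in fact leaves part~(\ref{L:Hi vanishing ii}) implicit, so your inverse-limit remark is a small addition rather than a departure.
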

\begin{proof}
The lisse sheaf $E[\ell^n]$ corresponds to a representation
\[
\rho_{E,\ell^n} \colon \pi_1(U,\bbar\eta) \to \Aut_{\ZZ/\ell^n\ZZ}(E[\ell^n]_{\bbar\eta}) \cong \GL_2(\ZZ/\ell^n\ZZ),
\]
where $\bbar\eta$ is a geometric generic point of $U$.   The Weil pairing on $E[\ell^n]$ is non-degenerate and alternating, so $H:=\rho_{E,\ell^n}(\pi_1(U_{\bbar{k}})) \subseteq \SL_2(\ZZ/\ell^n\ZZ)$.    Proposition~\ref{P:elliptic big monodromy} implies that the image of $H$ modulo $\ell$ is $\SL_2(\ZZ/\ell\ZZ)$.  We thus have $H=\SL_2(\ZZ/\ell^n\ZZ)$ since $\SL_2(\ZZ/\ell^n\ZZ)$ has no proper subgroups whose image modulo $\ell$ is $\SL_2(\ZZ/\ell\ZZ)$, cf.~Lemma~2 on page IV-23 of \cite{MR0263823}.  

We now prove (\ref{L:Hi vanishing i}).  Since $C$ has dimension $1$, we need only consider $i\in \{0,2\}$.  The Weil pairing on $E[\ell^n]$ gives rise to an isomorphism $E[\ell^n]^\vee(1)\cong E[\ell^n]$ of sheaves on $U$.  Using this isomorphism and Poincar\'e duality (for example, as in \cite{MR559531}*{V Proposition~2.2(b)}), we obtain a non-degenerate pairing $H^0(C_{\bbar{k}}, j_*(E[\ell^n])) \times H^2(C_{\bbar{k}}, j_*(E[\ell^n]))) \to \ZZ/\ell^n\ZZ$.  So we may assume that $i=0$.   We have
\[
H^0(C_{\bbar{k}}, j_*(E[\ell^n])) = H^0(U_{\FFbar_q}, E[\ell^n]) = (E[\ell^n]_{\bbar{\eta}})^{\pi_1(U_{\bbar{k}},\bbar\eta)}=0,
\]
where the last equality uses that $\rho_{E,\ell^n}(\pi_1(U_{\bbar{k}})) =\SL_2(\ZZ/\ell^n\ZZ)$.
\end{proof}	
  
We have a short exact sequence $0 \to T_\ell(E) \xrightarrow{\times \ell} T_\ell(E) \to E[\ell] \to 0$ of sheaves on $U$.  Pushing forward, we have a short exact sequence 
\[
0 \to j_*(T_\ell(E)) \xrightarrow{\times \ell} j_*(T_\ell(E)) \to j_*(E[\ell]) \to 0
\]
of sheaves on $C$ which gives an exact sequence
\begin{equation} \label{E:M seq}
0 = H^0(C_{\bbar{k}}, j_*(E[\ell])) \to M \xrightarrow{\times \ell} M \to V_{E,\ell} \to H^2(C_{\bbar{k}}, j_*(T_\ell(E)))=0,
\end{equation}
where we have used Lemma~\ref{L:Hi vanishing} for the $H^0$ and $H^2$ terms.  From (\ref{E:M seq}), the finitely generated $\ZZ_\ell$-module $M$ has trivial $\ell$-torsion and is thus a free $\ZZ_\ell$-module of finite rank.   From (\ref{E:M seq}), we have an isomorphism of $M/\ell M$ and $V_{E,\ell}$ that respects the action of $\Frob_q$.    Therefore, $L(T/q,E)= \det(I - \Frob_q T | M\otimes_{\ZZ_\ell} \QQ_\ell)$ is congruent modulo $\ell$ to $\det(I-\Frob_q T | V_{E,\ell})$.  \\

To complete the proof of Proposition~\ref{P:main theta mod ell}, it remains to show that $V_{E,\ell}$ has dimension $N_E$ over $\FF_\ell$.  As a consequence, we will deduce that $L(T,E)$ has degree $N_E$.  Define $\chi_\ell = \sum_{i} (-1)^i \dim_{\FF_\ell} H^i(C_{\bbar{k}},j_*(E[\ell]))$.    By \cite{MR559531}*{V Theorem~2.12}, we have
\[
\chi_\ell = (2-2g) \dim_{\FF_\ell} j_*(E[\ell])_{\bbar\eta} - {\sum}_x \mathfrak{f}_x = 4-4g  - {\sum}_x \mathfrak{f}_x,
\]
where the sums are over the closed points of $C_{\bbar{k}}$ and $\mathfrak{f}_x$ is the (exponent of the) conductor of the sheaf $j_*(E[\ell])$ at $x$.   Since the sheaf $j_*(E[\ell])$ is tamely ramified ($q$ is not a power of $2$ or $3$), we have $\mathfrak{f}_x = \dim_{\FF_\ell} j_*(E[\ell])_{\bbar\eta}  - \dim_{\FF_\ell} j_*(E[\ell])_{\bbar{x}} = 2 - \dim_{\FF} j_*(E[\ell])_{\bbar{x}}$.  In particular, $\mathfrak{f}_x$ is $0$, $1$, or $2$ if $E$ has good, multiplicative or additive reduction, respectively, at $x$.   The sum of the $\mathfrak{f}_x$ over the closed points $x$ of $C_{\bbar{k}}$ is equal to $\sum_{x\in |C|} f_x(E) \deg x$.   Therefore, $-\chi_\ell$ equals $N_E$.    Using Lemma~\ref{L:Hi vanishing}(\ref{L:Hi vanishing i}), we deduce that $V_{E,\ell}=H^1(C_{\bbar{k}},j_*(E[\ell]))$ has dimension $-\chi_\ell=N_E$ over $\FF_\ell$.\\

Let us prove the functional equation for $L(T,E)$ using what we have already proved.  Take any prime $\ell\nmid 6q\calL_E$.   We have shown that $L(T/q,E)\equiv \det(I-AT) \pmod{\ell}$ for some $A\in \Or(V_{E,\ell})$.   We have $T^{N_E} \det(I-AT^{-1}) = \pm \det(I-AT)$ for every $A\in \Or(V_{E,\ell})$, so $T^{N_E} L(T^{-1}/q,E) \equiv \pm L(T,E) \pmod{\ell}$.   Since this holds for all but finitely many primes $\ell$, we must have $T^{N_E} L(T^{-1}/q,E) = \varepsilon L(T/q,E)$ for a unique $\varepsilon\in \{\pm 1\}$.  

We can express $\varepsilon$ as a product of local root numbers $\varepsilon_x(E)$ over the closed points $x$ of $C$; note that $\varepsilon_x(E)=1$ if $E$ has good reduction at $x$.     Fix a closed point $x$ of $C$ for which $E$ has bad reduction and let $\kappa$ be the Kodaira symbol of $E$ at $x$.  If $\kappa$ is not of the form $\I_n$ or $\I_n^*$ with $n>0$, then $\varepsilon_x(E)=\chi_x(-r_x(E))$ by Theorem~3.1 of \cite{MR2183392}; this uses the ongoing assumption that $\gcd(q,6)=1$ and also that the $e$ of loc.~cit.~is $12/\gcd(e_x(E),12)$.   If $\kappa$ is of the form $\I_n^*$ for some $n\geq 0$, then $\varepsilon_x(E)=\chi_x(-r_x(E))$ by Theorem~3.1(2) of \cite{MR2183392}.  If $\kappa$ is of the form $\I_n$ for some $n>0$, then $\varepsilon_x(E)$ is $-1$ or $1$ when $E$ has split or non-split multiplicative reduction, respectively, at $x$; cf.~Theorem~3.1(2) and Lemma~2.2 of \cite{MR2183392}.     This shows that $\varepsilon$ agrees with our value $\varepsilon_E$.  This completes the proof of Theorem~\ref{T:L-function fundamentals}.

\subsection{Proof of Proposition~\ref{P:theta comp}} \label{SS:proof of theta comp}
To compute spinor norms, we will use the following result of Zassenhaus.

\begin{lemma} \label{L:Zassenhaus}
Let $V$ be an orthogonal space of dimension $N$ defined over a finite field $\FF$ of odd characteristic.   If $B\in \Or(V)$ satisfies $\det(I+B)\neq 0$, then $\spin(B) = 2^N \det(I+B) \cdot (\FF^\times)^2$.
\end{lemma}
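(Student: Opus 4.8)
The plan is to reduce to the generating case of reflections and use multiplicativity of the spinor norm together with multiplicativity of $\det(I+\cdot)$ on suitable orthogonal direct sums. First I would recall that since $\Char \FF$ is odd, every element $B \in \Or(V)$ is a product of reflections $B = r_{v_1}\cdots r_{v_s}$ (Cartan--Dieudonn\'e), and that by definition $\spin(B) = \prod_{i} \ang{v_i}{v_i}\cdot(\FF^\times)^2$. The subtlety is that the condition $\det(I+B)\neq 0$, equivalently that $-1$ is not an eigenvalue of $B$, equivalently that $B$ has no nonzero fixed vector under $-B$, is exactly what is needed to pin down the square class $\prod_i\ang{v_i}{v_i}$ independently of the chosen reflection decomposition.

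The key steps, in order. (1) Treat the two-dimensional ``rotation'' building block: if $\dim V = 2$ and $B = r_u r_v$ with $\ang{u}{u},\ang{v}{v}\neq 0$, compute directly that $\det(I+B) = 2(1+\cos$-type expression$)$ and show $\det(I+B)\cdot(\FF^\times)^2 = 2^{-2}\ang{u}{u}\ang{v}{v}\cdot(\FF^\times)^2$, i.e. $\spin(B) = 2^2\det(I+B)\cdot(\FF^\times)^2$; the hypothesis $\det(I+B)\neq 0$ is what makes this a valid identity of square classes. (2) Handle a single reflection $B=r_v$ on a line direct-summed with its orthogonal complement on which $B$ acts trivially: here $\det(I+r_v)$ computed on all of $V$ picks up a factor $2^{N-1}$ from the fixed space and a factor $(1 + (-1)) = 0$ from the $(-1)$-eigenline --- so $\det(I+r_v) = 0$ and a single reflection is automatically excluded from the hypothesis, consistent with the statement. (3) For general $B$ with $\det(I+B)\neq 0$, use the primary decomposition of $V$ under $B$: write $V$ as an orthogonal direct sum of $B$-stable subspaces, one being the fixed space $V^B$ (on which $I+B = 2\cdot I$, contributing $2^{\dim V^B}$ to $\det(I+B)$ and contributing trivially to $\spin$), and the others being two-dimensional ``rotation'' planes and (if needed) the $B$-stable completion; on each rotation plane apply step (1). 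Multiplying the square-class identities from the pieces, the factors $2^{\dim V^B}\cdot\prod 2^2$ assemble — after accounting for the number of two-dimensional blocks $=(N-\dim V^B)/2$ — into exactly $2^N$, giving $\spin(B) = 2^N\det(I+B)\cdot(\FF^\times)^2$.

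The main obstacle is step (3): making the orthogonal block decomposition of $V$ under an arbitrary semisimple-or-not $B\in\Or(V)$ clean enough that the exponents of $2$ bookkeep correctly, and in particular handling the possibility that $B$ is not semisimple (unipotent-times-semisimple parts). The honest route is probably to cite the standard structure theory of orthogonal transformations over a field (decomposition into $V^B$, $V^{-B}$, and a part with no eigenvalues $\pm 1$ that is a sum of planes, together with a possibly non-split unipotent piece), note that $\det(I+B)\neq 0$ forces $V^{-B}=0$, and then verify the count; alternatively, since we only need an identity of square classes and both sides are visibly multiplicative in orthogonal direct sums of $B$-stable subspaces, one reduces to the indecomposable case and checks a short list. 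I would present the reflection-product computation explicitly for the $2$-dimensional block, assert the orthogonal decomposition with a reference, and leave the exponent count as the routine verification it is.
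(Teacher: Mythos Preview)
The paper does not prove this lemma at all: it simply cites Zassenhaus's original paper \cite{MR0148760} and points to Theorem~C.5.7 of \cite{Conrad-Zass} for a modern treatment. So there is no argument in the paper against which to compare your sketch; the paper treats the formula as a black box.

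Your proposed route has a genuine gap in step~(3). The decomposition you want, namely $V = V^B \perp (\text{orthogonal sum of $2$-dimensional $B$-stable planes})$, does not exist in general. First, $V^B$ need not be non-degenerate when $B$ is not semisimple: for an isotropic shear $B = I + N$ with $N^2 = 0$ (exactly the elements used elsewhere in the paper), one has $\det(I+B)=2^N \neq 0$, yet $\operatorname{im}(N) \subseteq V^B$ sits in the radical of the restricted form, so $V^B$ is degenerate and is not an orthogonal direct summand. Second, even for semisimple $B$ with no eigenvalue $-1$, the part of $V$ with no eigenvalue $\pm 1$ need not split over $\FF$ into $2$-dimensional $B$-stable planes: if $B$ has an eigenvalue $\lambda$ whose Galois orbit over $\FF$ has size $d>2$, the smallest $B$-stable subspace defined over $\FF$ containing the $\lambda$-eigenline is $d$-dimensional and $\FF[B]$-irreducible, hence admits no $2$-dimensional $B$-stable subspace at all. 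So no reference will supply the decomposition you describe, and the ``short list'' of indecomposables is strictly longer than the $2$-dimensional rotation block you analyze.

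Your approach can be salvaged, but with more work than you suggest. One first reduces to the semisimple case via Jordan decomposition $B = B_s B_u$: since $B$ and $B_s$ have the same characteristic polynomial one gets $\det(I+B)=\det(I+B_s)$, and $\spin(B_u)$ is trivial because $B_u$ has odd ($p$-power) order while the target has order $2$. For semisimple $B$ one then decomposes orthogonally according to Galois orbits of eigenvalue pairs $\{\lambda,\lambda^{-1}\}$ and checks each block; the hyperbolic blocks $U\oplus U^\ast$ come out via $\spin(B|_W)=\det(B|_U)$, but the anisotropic (``unitary-type'') blocks require a separate computation. The standard proofs in the cited references avoid this case analysis entirely by using the Cayley transform $S=(I-B)(I+B)^{-1}$, which is skew-adjoint precisely when $B$ is orthogonal with $\det(I+B)\neq 0$, and reading off the spinor norm from $S$.
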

\begin{proof}
This is a special case of Zassenhaus' formula for the spinor norm in \S2 of \cite{MR0148760}; see Theorem~C.5.7 of \cite{Conrad-Zass} for a modern proof.
\end{proof}

Set $A:=\theta_{E,\ell}(\Frob_q)$.    By  Proposition~\ref{P:main theta mod ell}, the vector space $V_{E,\ell}$ has dimension $N_E$ and we have
\begin{equation} \label{E:g and L}
 \det(I-AT)  \equiv L(T/q,E) \pmod{\ell}.
\end{equation}
Since $A$ belongs to $\Or(V_\ell)$, we find that  $T^{N_E} \det(I-AT^{-1}) = \det(-A) \det(I-AT)$.  By (\ref{E:g and L}) and the functional equation in Theorem~\ref{T:L-function fundamentals}, we also have $T^{N_E} \det(I-AT^{-1}) = \varepsilon_E \det(I-AT)$.  Comparing these two equations, we deduce that $\det(-A)=(-1)^{N_E} \det(A)$ agrees with $\varepsilon_E$.   This proves part (\ref{P:theta comp i}).

Now suppose that $\det(I-A)\neq 0$.   Since $\det(I+(-A))\neq 0$, Lemma~\ref{L:Zassenhaus} and (\ref{E:g and L}) give us
\[
\spin(-A) =2^{N_E} \det(I-A) \cdot(\FF_\ell^\times)^2 = 2^{N_E} L(1/q,E) \cdot(\FF_\ell^\times)^2.
\]
Therefore, $\spin(-A) =  2^{N_E} q^{g-1+\chi_E} c_E \cdot (\FF_\ell^\times)^2$ by Corollary~\ref{C:BSD main}  (as noted in \S\ref{SS:Lfunctions mod ell}, $\ell \nmid q^{g-1+\chi_E} c_E$).   This proves (\ref{P:theta comp ii}).  Before proving parts (\ref{P:theta comp iii}) and (\ref{P:theta comp iv}), we need the following lemma.

\begin{lemma} \label{L:technical gamma}
Let $E'/K$ be the quadratic twist of $E/K$ by a non-square $\beta$ in $k^\times$.   Take any closed point $x$ of $C$.
\begin{romanenum}
\item \label{L:technical gamma a}
The curves $E'$ and $E$ have the same Kodaira symbol at $x$.
\item \label{L:technical gamma b}
We have $a_x(E')=(-1)^{\deg x} a_x(E)$.
\item \label{L:technical gamma c}
The integer $c_x(E)c_x(E') \gamma_x(E)^{\deg x}$ is a square.
\end{romanenum}
\end{lemma}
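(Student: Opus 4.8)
The plan is to prove the three parts by purely local analysis at the closed point $x$, using Tate's algorithm to track how quadratic twisting by a non-square constant $\beta \in k^\times$ affects the reduction type, the traces of Frobenius, and the component groups.

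For part (\ref{L:technical gamma a}), I would recall that twisting an elliptic curve by $\beta$ is, after passing to the quadratic constant field extension $k' = k(\sqrt{\beta})$, the same as base change; since $k'/k$ is unramified at every place of $C$, the Kodaira symbol (which is insensitive to unramified base extension of the residue field) is unchanged. Concretely, a twisted Weierstrass model has the same $j$-invariant and the same $v_x(\Delta)$ up to the contribution of $\beta$, which is a unit at $x$, so the Kodaira symbol is invariant; one checks directly in Tate's algorithm that every step depends only on valuations of the coefficients of a minimal model, and twisting by a unit does not change these.

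For part (\ref{L:technical gamma b}), if $E$ has good reduction at $x$ then $E' = E^{(\beta)}$ over $\FF_x$ is the quadratic twist by the image of $\beta$ in $\FF_x^\times$; this image is a square in $\FF_x$ precisely when $\deg x$ is even (since $\beta$ is a non-square in $k^\times$ and $\FF_x^\times$ has index $1$ or $2$ image of $k^\times$ according to the parity of $\deg x$). When it is a square, $E'_x \cong E_x$ and $a_x(E') = a_x(E)$; when it is a non-square, the standard twist formula gives $a_x(E') = -a_x(E)$. Either way $a_x(E') = (-1)^{\deg x} a_x(E)$. For bad reduction: additive reduction gives $a_x = 0$ for both curves by definition, so the formula is trivial; for multiplicative reduction one uses that $a_x \in \{\pm 1\}$ with the sign recording split versus non-split, that twisting by a unit preserves multiplicative reduction (part (a)), and that the split/non-split status changes exactly when $\beta$ becomes a non-square in $\FF_x$, i.e.\ when $\deg x$ is odd, giving again the claimed sign.

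For part (\ref{L:technical gamma c}), the point is that $c_x(E) c_x(E')$, the product of component-group orders, should be a square times $\gamma_x(E)^{\deg x}$. I would go through the Kodaira types case by case using the table in \S\ref{SS:some invariants}: for good reduction and for types where $c_x$ can only be $1$ or $2$ (namely $\I_0$, $\II$, $\III$, $\IIs$, $\IIIs$, and the non-split cases), $c_x(E)$ and $c_x(E')$ agree by (a) plus the observation that the component group over $\FF_x$ versus over $\FF_x(\sqrt{\beta})$ can differ only for types $\I_n$, $\Ios_n$, $\IV$, $\IVs$, so $c_x(E)c_x(E')$ is already a perfect square and $\gamma_x(E) = 1$ in precisely these cases (check against the table: $\gamma_x = 1$ for $\I_0$, $\Ios$, $\II$, $\III$, $\IIs$, $\IIIs$); the interesting cases are $\I_n$, $\I_n^*$, $\IV$, $\IVs$, where $\gamma_x$ is $n/\gcd(2,n)$, $2/\gcd(2,n)$, $3$, $3$ respectively, and one must verify by explicit description of the component group (e.g.\ $\I_n$ has component group $\ZZ/n$ over the splitting field but its $\FF_x$-points depend on a Frobenius action) that $c_x(E)$, $c_x(E')$, and $\gamma_x(E)^{\deg x}$ multiply to a square. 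I expect part (c), specifically the bookkeeping for types $\I_n$ and $\I_n^*$ where the Frobenius can act on the component group $\ZZ/n$ by $\pm 1$ and the twist swaps these two actions, to be the main obstacle; the key computation is that if $c_x(E) = \gcd(n, \text{something})$ and $c_x(E')$ is the complementary divisor, then their product times the appropriate power of $n/\gcd(2,n)$ lands in $(\ZZ^\times)^2$ — this is a short but slightly fiddly number-theoretic check that I would organize as a small table indexed by Kodaira symbol and by parity of $\deg x$.
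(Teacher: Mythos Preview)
Your approach is essentially the paper's: split by parity of $\deg x$ and do a Kodaira-type case analysis via Tate's algorithm. Two comments to sharpen it.

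First, make the parity split cleanly at the outset for all three parts. When $\deg x$ is even, $\beta$ is a square in $\FF_x$, hence in $K_x$, so $E$ and $E'$ are \emph{isomorphic} over $K_x$; parts (\ref{L:technical gamma a})--(\ref{L:technical gamma c}) are then trivial (for (\ref{L:technical gamma c}) note $\gamma_x(E)^{\deg x}$ is automatically a square). Your argument for (\ref{L:technical gamma a}) via stability of the Kodaira symbol under the unramified quadratic extension $K_x(\sqrt{\beta})/K_x$ is correct and slightly slicker than the paper's direct Tate computation, but it only matters in the odd case.

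Second, for part (\ref{L:technical gamma c}) with $\deg x$ odd your sketch is on the right track but the bookkeeping needs to be pinned down, and your classification has slips. The mechanism is not that $c_x(E)$ and $c_x(E')$ ``agree'' in the easy cases; rather, the point is that for the types where $c_x$ can vary ($\I_n$, $\IV$, $\IVs$, $\I_n^*$, $\Ios$), the step in Tate's algorithm deciding $c_x$ asks whether a certain element of $\FF_x$ is a square, and twisting by the non-square $\beta$ multiplies that element by a non-square --- so exactly one of $E,E'$ lands in each branch. Concretely: for $\I_n$ one curve is split ($c_x=n$) and the other non-split ($c_x=\gcd(2,n)$), giving $c_x(E)c_x(E')\gamma_x = n\cdot\gcd(2,n)\cdot n/\gcd(2,n)=n^2$; for $\IV$ and $\IVs$ one gets $c_x(E)c_x(E')=1\cdot 3=3$ and $\gamma_x=3$; for $\I_n^*$ with $n$ odd, $c_x(E)c_x(E')=2\cdot 4$ and $\gamma_x=2$; for $\I_n^*$ with $n$ even (including $\Ios$) the product $c_x(E)c_x(E')$ is already a square and $\gamma_x=1$. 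Your framing in terms of ``component group over $\FF_x$ versus over $\FF_x(\sqrt{\beta})$'' obscures this swap; it is cleaner to trace the effect of $\beta$ through the explicit twisted model $\yy^2=\xx^3+\beta a_2'\xx^2+\beta^2 a_4'\xx+\beta^3 a_6'$ as the paper does.
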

\begin{proof}
First suppose that $\deg x$ is even.    Since $\beta$ is a square in $\FF_x$, we find that $E$ and $E'$ are isomorphic over $K_x$.  All the parts of the lemma are now immediate.

Now suppose that $\deg x$ is odd.   Let $\OO_x$ be the valuation ring of $K_x$ and let $\pi$ be a uniformizer. Tate's algorithm, as presented in \cite{MR1312368}*{IV~\S9} or \cite{MR0393039}, starts with a Weierstrass equation
\begin{equation} \label{E:technical gamma 1}
\yy^2+a_1 \xx \yy +a_3 y = \xx^3 + a_2 \xx^2 +a_4 \xx +a_6
\end{equation}
for $E$ over the local field $K_x$ with $a_i\in \OO_x$.    The algorithm then changes coordinates several times which imposes various conditions on the powers of $\pi$ dividing the coefficients $a_i$; these conditions in loc.~cit. are boxed (similar remarks hold for Subprocedure~7).   By completing the square in (\ref{E:technical gamma 1}), we find that this elliptic curve is isomorphic to the one defined by the equation
\begin{equation} \label{E:technical gamma 2}
\yy^2 = \xx^3 + a_2' \xx^2 +a_4' \xx +a_6',
\end{equation}
with $a_2' = a_2+a_1^2/4$, $a_4'=a_4+a_1 a_3/2$ and $a_6'=a_6+a_3^2/4$.   Using that $q$ is odd, we find that $a_2'$, $a_4'$ and $a_6'$ belong to $\OO_x$ and that the conditions in Tate's algorithm are preserved.   So, we may thus always assume in any application of Tate's algorithm that we have a Weierstrass equation of the form (\ref{E:technical gamma 2}).

If $E$ over $K_x$ is given by (\ref{E:technical gamma 2}), then $E'$ over $K_x$ has a Weierstrass equation $\yy^2 = \xx^3 + a_2' \beta \xx^2 +a_4' \beta^2\xx +\beta^3 a_6'$.   Applying Tate's algorithm, it is now easy to see that $E$ and $E'$ have the same Kodaira symbol and to determine the possibilities for $c_x(E)c_x(E')$.   Let $\kappa$ be the Kodaira symbol of $E$ and $E'$ at $x$.  If $\kappa \in \{\I_0,\II,\II^*\}$, then $c_x(E)c_x(E')=1$.  If $\kappa=\I_n$ with $n>0$, then $c_x(E)c_x(E') = \gcd(2n) \cdot n$ (precisely one of curves $E$ and $E'$ has split reduction at $x$; this uses that $\beta$ is a non-square in $\FF_x$ since $\deg x$ is odd).  If $\kappa \in \{\III,\III^*\}$, then $c_x(E)c_x(E')=2^2$.   If $\kappa \in \{\IV,\IV^*\}$, then $c_x(E)c_x(E')=1\cdot 3=3$.   If $\kappa=\I_n^*$ with $n$ odd, then $c_x(E)c_x(E')=2\cdot 4=8$.   If $\kappa=\I_n^*$ with $n>0$ even, then $c_x(E)c_x(E') \in \{2^2,4^2\}$.  Finally, if $\kappa=\I_0^*$, then $c_x(E)c_x(E') \in \{1^2,2^2,4^2\}$.    In all these cases, we find that integer $c_x(E)c_x(E') \gamma_x(E)$ is a square.   Since $\deg x$ is odd, we conclude that $c_x(E)c_x(E') \gamma_x(E)^{\deg x}=c_x(E)c_x(E') \gamma_x(E)\cdot (\gamma_x(E)^{(\deg x-1)/2})^2$ is a square.

It remains to verify that $a_x(E')=-a_x(E)$.  This is immediate if $E$, and hence $E'$, has additive reduction at $x$ since $a_x(E')=a_x(E)=0$.  If $E$, and hence $E'$, has multiplicative reduction at $x$, then one has split reduction and the other non-split reduction (since $\beta$ is not a square in $\FF_x$), so $a_x(E')=-a_x(E)$.    

Finally suppose that $E$, and hence $E'$, has good reduction at $x$.  Fix a Weierstrass model $\yy^2=f(\defi{x})$ for $E_x/\FF_x$ with a cubic $f\in \FF_x[\defi{x}]$; the equation $\beta \yy^2=f(\defi{x})$ is a model of $E'_x/\FF_x$.   Take any $a\in \FF_x$.  If $f(a)$ is a non-zero square in $\FF_x$, then there are two point in $E_x(\FF_x)$ with $\defi{x}$-coordinate $a$.  If $f(a)$ is a non-square in $\FF_x$, then there are two point on $E'_x(\FF_x)$ with $\defi{x}$-coordinate $a$.  If $f(a)=0$, then $E_x(\FF_x)$ and $E'_x(\FF_x)$  both have one point with $\defi{x}$-coordinate $a$.    Remembering the points at infinite, we find that $|E_x(\FF_x)| + |E'_x(\FF_x)| = 2q^{\deg x} +2$ and hence $a_x(E')=-a_x(E)$.
\end{proof}

Now suppose that $\det(I+A) \neq 0$.  By  Lemma~\ref{L:Zassenhaus} and (\ref{E:g and L}), we have 
 \[
 \spin(A) = 2^{N_E} \det(I+A) \cdot (\FF_\ell^\times)^2 = 2^{N_E} L(-1/q, E) \cdot (\FF_\ell^\times)^2.
\]  
Let $E'$ be an elliptic curve over $K$ that is a quadratic twist of $E$ by a non-square in $k^\times$.   Lemma~\ref{L:technical gamma}(\ref{L:technical gamma b}) implies that $L(T,E')=L(-T,E)$ and hence $L(1/q,E')=L(-1/q,E)$.  By Corollary~\ref{C:BSD main}, we deduce that $L(-1/q,E) \in  q^{g-1+\chi_{E'}}  c_{E'}  (\QQ^\times)^2$.   We have $\chi_{E'}=\chi_E$ since $E$ and $E'$ have the same Kodaira symbols by Lemma~\ref{L:technical gamma}(\ref{L:technical gamma a}).  By Lemma~\ref{L:technical gamma}(\ref{L:technical gamma c}), the integer $c_E c_{E'} \gamma_E = \prod_{x\in |C|} c_x(E)c_x(E') \gamma_x(E)^{\deg x}$ is a square.  Therefore,  $L(-1/q,E) \in  q^{g-1+\chi_{E}}  c_{E} \gamma_E  (\QQ^\times)^2$.  Since $\ell \nmid q^{g-1+\chi_{E}}  c_{E} \gamma_E $, we conclude that
\[
\spin(A) = 2^{N_E} q^{g-1+\chi_E} c_E \gamma_E \cdot (\FF_\ell^\times)^2.
\]
This completes the proof of (\ref{P:theta comp ii}).   Finally, suppose that $\det(I\pm A)\neq 0$.  By (\ref{P:theta comp ii}) and (\ref{P:theta comp iii}), we have 
\[
\spin(-I) = \spin(-A)\spin(A) = (2^{N_E} q^{g-1+\chi_E} c_E)\cdot (2^{N_E} q^{g-1+\chi_E} c_E \gamma_E) \cdot (\FF_\ell^\times)^2 = \gamma_E \cdot (\FF_\ell^\times)^2. 
\]
This proves part (\ref{P:theta comp iv}) since $\disc(V_{E,\ell})=\spin(-I)$.

\section{Families of quadratic twists}  \label{S:twists}

\subsection{Setup} \label{SS:twist setup}

Let $R$ be either a finite field whose characteristic is greater than $3$ or a ring of the form $\ZZ[S^{-1}]$ with $S$ a finite set of primes containing $2$ and $3$.  Fix a Weierstrass equation
\begin{equation} \label{E:initial WE}
\yy^2= \xx^3 + a_2(t) \xx^2 +a_4(t) \xx +a_6(t)
\end{equation}
with $a_i \in R[t]$ such that its discriminant $\Delta \in R[t]$ is non-zero.    Assume that the $j$-invariant $J(t)$ of the elliptic curve over $F(t)$ defined by (\ref{E:initial WE}), where $F$ is the quotient field of $R$, has non-constant $j$-invariant.   When $R=\ZZ[S^{-1}]$, we will allow ourselves to repeatedly enlarge the finite set $S$ so that various properties hold.   For example if $R$ has characteristic $0$, we shall assume that $\Delta(t)\not\equiv 0 \pmod{p}$ for all primes $p\notin S$.    
\\

We now consider quadratic twists by degree $1$ polynomials. Define the $R$-scheme 
\[
M=\Spec R[u,\Delta(u)^{-1}].
\]   
Let $k$ be any finite field that is an $R$-algebra (i.e., a finite extension of the field $R$ or a finite field whose characteristic does not lie in $S$).  Take any $m\in M(k)$, i.e., an element $m\in k$ with $\Delta(m)\neq 0$, and let $E_m$ be the elliptic curve over $k(t)$ defined by the Weierstrass equation
\begin{equation} \label{E:initial WE twist}
(t-m) \yy^2= \xx^3 + a_2(t) \xx^2 +a_4(t) \xx +a_6(t).
\end{equation}
We will prove the following in \S\ref{SS:kodaira independence}

\begin{lemma} \label{L:independence of p}
After possibly increasing the finite set $S$ when $R=\ZZ[S^{-1}]$, the multiset $\Kod(E_m)$ and the Kodaira symbol of $E_m$ at $\infty$ are independent of the choice of $k$ and $m$.
\end{lemma}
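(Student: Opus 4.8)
The statement is about the quadratic twist family $E_m$ defined by $(t-m)\yy^2 = \xx^3 + a_2(t)\xx^2 + a_4(t)\xx + a_6(t)$ over $k(t)$, and we want the multiset $\Kod(E_m)$ of Kodaira symbols (together with the symbol at $\infty$) to be independent of the choice of the finite field $k$ (an $R$-algebra) and the point $m \in M(k)$. The strategy is to run Tate's algorithm in families, using that the Kodaira symbol at a closed point is governed by valuations of polynomial expressions in the coefficients, and that these valuations can be made ``generic'' after inverting finitely many things in $R$ (when $R = \ZZ[S^{-1}]$ this means enlarging $S$, which we are allowed to do). The key point is that Tate's algorithm only tests whether various explicit polynomials in the $a_i$ and $\Delta$ — the $b_i$, $c_4$, $c_6$, $j$, and the auxiliary cubics/quadratics appearing in the $\I_0^*$, $\IV$, $\IV^*$ steps — vanish or not modulo a uniformizer. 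So the Kodaira type of $E_m$ at a place is a constructible condition on $(m, t)$, and constructibility plus a spreading-out argument over $\Spec R$ will give the uniformity.

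**Main steps, in order.** First I would reduce to the short Weierstrass form: completing the square (legitimate since $\Char \neq 2$ on any $R$-algebra $k$) we may assume $a_1 = a_3 = 0$, and then the twist by $(t-m)$ is literally $E_m\colon \yy^2 = \xx^3 + (t-m)a_2(t)\xx^2 + (t-m)^2 a_4(t)\xx + (t-m)^3 a_6(t)$ after clearing denominators. Second, I would identify the locus of bad reduction: the discriminant of $E_m$ as a polynomial in $t$ is $\Delta_m(t) = (t-m)^6 \Delta(t)$ up to a unit (here $\Delta(t)$ is the discriminant of the original curve), so the bad fibers of $E_m/k(t)$ lie over $t = m$, over the roots of $\Delta(t)$, and possibly over $t = \infty$. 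Third — and this is the heart — I would treat each type of bad place separately: (a) at the point $t = m$, the twist introduces a tame quadratic ramification, and I would compute directly (by a single run of Tate's algorithm on the explicit local equation at $t - m$) that the Kodaira symbol there is determined by the reduction type of the \emph{original} curve at $t = m$ and the parity considerations of quadratic twisting — this is exactly the content of Lemma~\ref{L:technical gamma}(\ref{L:technical gamma a}) applied at the place $t=m$ of $k(t)$, noting that over $\Spec R$ this is governed by whether $R$-polynomial expressions vanish, so after enlarging $S$ the answer is the same for all $k, m$; (b) at a root $\alpha$ of $\Delta(t)$, the factor $(t - m)$ is a unit locally (as long as $m$ is not a root of $\Delta$, which is forced since $m \in M(k)$), so $E_m$ is a \emph{trivial} quadratic twist there, isomorphic to the original curve over the local ring, hence has the same Kodaira symbol as the original curve at $\alpha$ — again a constructible condition over $\Spec R$; (c) at $t = \infty$, I would substitute $t = 1/s$, clear denominators, and run Tate's algorithm at $s = 0$; the leading behavior involves $m$ only through the unit $(1 - ms)$, so the symbol at $\infty$ depends only on the original data and not on $m$ or $k$ after inverting finitely many things. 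Fourth, I would assemble: by constructibility of ``Kodaira type $= \tau$ at the specialization $(m,t)$'' over the base and by the Chevalley/spreading-out principle, there is a dense open (equivalently, a finite enlargement of $S$) over which the type at each relevant place, and the multiplicity count $\deg x$, stabilize — giving independence of $\Kod(E_m)$ and of the symbol at $\infty$ from $k$ and $m$.

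**The main obstacle.** The genuinely delicate part is item (a), the place $t = m$: the original curve can have \emph{any} reduction type there (including good reduction), and twisting by a degree-one polynomial changes it, so one must verify that the output of Tate's algorithm at $t - m$ is still uniform in $m$. The subtlety is that Tate's algorithm branches on whether quantities like $c_4$, $c_6$, and certain auxiliary polynomials vanish modulo $(t - m)$, and a priori these vanishing conditions could depend on $m$ in a non-constructible way. The resolution is that all these quantities are polynomials in $m$ with coefficients in $R$ (after the reductions above), so each branch condition cuts out a constructible subset of $M$; since there are finitely many branches and finitely many roots of $\Delta$, intersecting the generic strata and enlarging $S$ to kill the lower-dimensional ones leaves a dense open $M' \subseteq M$ over which everything is constant. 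One should also observe that the \emph{multiplicity} with which a Kodaira symbol is counted in $\Kod(E_m)$ — namely $\deg x$ — is locally constant in the same sense, because whether $\Delta(t)$ (or the local factor at $\infty$) factors over $k$ or over an extension is again governed by finitely many conditions over $R$; but as remarked after the definition of $\Kod$, the multiset is insensitive to base change of $k$, so this last point causes no real trouble. After enlarging $S$ finitely many times (once for each place-type and each branch), $\Kod(E_m)$ and the symbol at $\infty$ are as claimed, independent of $k$ and $m \in M'(k)$; replacing $M$ by $M'$ (i.e., absorbing finitely many primes into $S$ and removing finitely many bad values of $u$, which we may do by further shrinking the scheme $M$) completes the proof.
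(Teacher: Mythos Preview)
Your decomposition into the three place-types is correct and matches the paper's approach, but you have overcomplicated the analysis at $t=m$ and in doing so introduced an error. You write that ``the original curve can have \emph{any} reduction type there,'' but this is false: by definition $m \in M(k)$ means $\Delta(m) \neq 0$, so the untwisted curve $E'$ has \emph{good} reduction at $t=m$. Twisting a curve with good reduction by the uniformizer $t-m$ gives Kodaira type $\I_0^*$ automatically, for every $k$ and every $m \in M(k)$. There is no branching, no constructibility analysis, and no need to invoke Lemma~\ref{L:technical gamma} (which in any case concerns twists by constants in $k^\times$, not by uniformizers). Consequently your proposed remedy of ``shrinking $M$'' to remove bad $u$-values is both unnecessary and illegitimate: the lemma asserts independence for \emph{all} $m \in M(k)$, and you are not permitted to discard any.

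With this correction your argument becomes essentially the paper's. The Kodaira symbol of $E_m$ at a root of $\Delta$ equals that of $E'$ there (twisting by a local unit preserves all the relevant valuations of $c_4$, $c_6$, $\Delta$---note your phrase ``trivial quadratic twist'' is a slight overstatement, since $(t-m)$ need not be a square in the local ring, but the conclusion holds regardless). The symbol at $t=m$ is always $\I_0^*$. The symbol at $\infty$ is determined by that of $E'$ at $\infty$ via the standard involution $\I_n \leftrightarrow \I_n^*$, $\II \leftrightarrow \IV^*$, $\III \leftrightarrow \III^*$, $\IV \leftrightarrow \II^*$ for twisting by a uniformizer. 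Thus $\Kod(E_m)$ and $\kappa_\infty$ depend only on $\Kod(E')$ and $\kappa'_\infty$, which are visibly independent of $m$; the spreading-out step over $\Spec R$ (your fourth step, or the paper's elliptic-surface argument in \S\ref{SS:kodaira independence}) then shows these are constant for $p \notin S$ after enlarging $S$.
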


After possibly increasing the set $S$ when $R$ has characteristic $0$, we shall assume that the conclusions of Lemma~\ref{L:independence of p} hold.   Let $\Phi$ be the common multiset of Kodaira symbols from Lemma~\ref{L:independence of p}; the assumption that $J(t)$ is non-constant ensures that $\Phi$ is non-empty.    Let $\kappa_\infty$ be the common Kodaira symbol at $\infty$ of Lemma~\ref{L:independence of p}.   

The integers $N_{E_m}$, $\chi_{E_m}$, $\calL_{E_m}$ and $\gamma_{E_m}$ can be determined directly from $\Kod(E_m)=\Phi$, so they are independent of of $k$ and $m$; denote their common values by $N$, $\chi$, $\calL$ and $\gamma$, respectively.      

Define the integer $B_{E_m}:=\sum_{x\neq \infty} b_x(E_m)\deg x$, where the sum is over the closed points of $\Aff^1_k=\Spec k[t]$ and the  $b_x(E_m)$ are defined in \S\ref{SS:some invariants}.   Since $B_{E_m}$ can be determined directly from $\Phi$ and $\kappa_\infty$, we find that it is independent of $k$ and $m$; denote this common integer by $B$.
\\

Finally, take any prime $\ell\nmid 6 \calL$ that is not the characteristic of $R$.  If $R$ has characteristic $0$, we replace $S$ by $S\cup\{\ell\}$.

\subsection{Main representation} \label{SS:main rep}
Fix notation and assumptions as in \S\ref{SS:twist setup}.   The goal of this section is to prove the following proposition which give a representation of the \'etale fundamental group of $M$ that encodes the $L$-functions of the various quadratic twists $E_m$.

\begin{proposition} \label{P:main construction}
After possibly replacing $S$ by a larger finite set of primes when $R$ has characteristic $0$, there is an $N$-dimensional orthogonal space $V_\ell$  over $\FF_\ell$  and a continuous representation
\[
\theta_\ell \colon \pi_1(M) \to \Or(V_\ell)
\]
such that for any $R$-algebra $k$ that is a finite field of order $q$ and any $m\in M(k)$, the following hold:
\begin{alphenum}
\item \label{P:main construction a}
$\det(I-\theta_\ell(\Frob_m) T) \equiv L(T/q,E_m) \pmod{\ell}$,
\item \label{P:main construction b}
$\det(\theta_{\ell}(\Frob_m))=(-1)^{N}  \varepsilon_{E_m}$,
\item  \label{P:main construction c}
if $\det(I-\theta_{\ell}(\Frob_m))\neq 0$, then $\spin(-\theta_{\ell}(\Frob_m)) = 2^{N} q^{-1+\chi} \,c_{E_m} \cdot (\FF_\ell^\times)^2$,
\item \label{P:main construction d}
if $\det(I+\theta_{\ell}(\Frob_m))\neq 0$, then $\spin(\theta_{\ell}(\Frob_m)) = 2^{N} q^{-1+\chi} \,c_{E_m} \gamma \cdot (\FF_\ell^\times)^2$,
\item \label{P:main construction e}
if $\det(I\pm \theta_{\ell}(\Frob_m))\neq 0$, then $\disc(V_{\ell}) = \gamma \cdot (\FF_\ell^\times)^2$.
\end{alphenum}
\end{proposition}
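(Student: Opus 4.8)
The plan is to build a relative version of the cohomological construction of \S\ref{S:elliptic background}: a lisse $\FF_\ell$-sheaf $\calF$ on $M$ whose stalk at a point $m\in M(k)$ is the orthogonal space $V_{E_m,\ell}$ of \S\ref{SS:Lfunctions mod ell} together with its $\Gal(\bbar{k}/k)$-action, and then to read off (\ref{P:main construction a})--(\ref{P:main construction e}) by applying the fibrewise results of Propositions~\ref{P:main theta mod ell} and \ref{P:theta comp} (equivalently, the arguments of \S\ref{SS:proof of theta comp}) to each $E_m$ separately.

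First I would set up the geometric family. Let $D_0\subset\PP^1_F$ be the locus of bad reduction of the curve over $F(t)$ defined by (\ref{E:initial WE}); after possibly enlarging $S$ it spreads out to a divisor of $\PP^1_R$ that is finite \'etale over $\Spec R$. Inside $\calC:=\PP^1_M$, with $t$ the fibre coordinate and $u$ the coordinate on $M$, put $\calD:=(D_0\times_R M)\cup\{t=u\}\cup\{t=\infty\}$; since $\Delta(u)$ is a unit on $M$ and $u$ is finite, the three pieces are pairwise disjoint away from a finite set of primes, so after a further enlargement of $S$ the map $\bar\pi\colon\calC\to M$ is smooth proper of relative dimension $1$ and $\calD\to M$ is finite \'etale. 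Set $\calU:=\calC\setminus\calD$. Because $t-u$ is invertible on $\calU$, the Weierstrass equation (\ref{E:initial WE twist}) with $m$ replaced by $u$ defines, after clearing denominators, a relative elliptic curve $\calE\to\calU$; its fibre over $m\in M(k)$ is the restriction of $E_m/k(t)$ to an open subscheme of $\PP^1_k$, and Lemma~\ref{L:independence of p} is precisely what guarantees that this open subscheme is the good-reduction locus $U_m$ of $E_m$, with combinatorics (bad points, Kodaira symbols) independent of $m$.

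Next I would form the sheaf. Let $\calE[\ell]$ be the $\ell$-torsion of $\calE\to\calU$, a lisse $\FF_\ell$-sheaf of rank $2$, and let $\iota\colon\calU\hookrightarrow\calC$ be the inclusion. Since $\gcd(q,6)=1$ forces the reduction of each $E_m$ to be tame, $\calE[\ell]$ is tamely ramified along every component of $\calD$; hence the formation of $\iota_*\calE[\ell]$ commutes with base change to the fibres of $\bar\pi$, and its restriction to $C_{\bbar{m}}$ is $j_*(E_m[\ell])$ in the notation of \S\ref{SS:Lfunctions mod ell}. By Lemma~\ref{L:Hi vanishing} the sheaves $R^q\bar\pi_*(\iota_*\calE[\ell])$ vanish fibrewise for $q\neq 1$, so (proper base change) $\calF:=R^1\bar\pi_*(\iota_*\calE[\ell])$ is lisse on $M$ with $\calF_{\bbar{m}}=V_{E_m,\ell}$ compatibly with the $\Gal(\bbar{k}/k)$-action, and of rank $N$ by Proposition~\ref{P:main theta mod ell}. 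The Weil pairing gives $\calE[\ell]^\vee(1)\isom\calE[\ell]$ on $\calU$; together with Poincar\'e duality on each fibre (as used in \S\ref{SS:Lfunctions mod ell}, which globalizes over $M$ since $\bar\pi$ is proper and $R^2\bar\pi_*\FF_\ell(1)\isom\FF_\ell$) this equips $\calF$ with a non-degenerate symmetric pairing. Taking a geometric generic fibre, $V_\ell:=\calF_{\bbar{\eta}}$ is an $N$-dimensional orthogonal space over $\FF_\ell$, and the monodromy action of $\pi_1(M)$, which preserves the pairing, defines the required continuous representation $\theta_\ell\colon\pi_1(M)\to\Or(V_\ell)$. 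All of this is defined over $R=\ZZ[S^{-1}]$ once $S$ has been enlarged suitably.

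Finally, for $m\in M(k)$ with $|k|=q$, the Frobenius class $\Frob_m$ acts on $\calF_{\bbar{m}}=V_{E_m,\ell}$ exactly as $\theta_{E_m,\ell}(\Frob_q)$, so Proposition~\ref{P:main theta mod ell} gives (\ref{P:main construction a}); parts (\ref{P:main construction b})--(\ref{P:main construction e}) then follow by repeating the arguments of \S\ref{SS:proof of theta comp} with $A:=\theta_\ell(\Frob_m)\in\Or(V_\ell)$ and $g=0$, $\chi_{E_m}=\chi$, $\gamma_{E_m}=\gamma$, using the functional equation (Theorem~\ref{T:L-function fundamentals}), Zassenhaus' formula (Lemma~\ref{L:Zassenhaus}), Corollary~\ref{C:BSD main}, and Lemma~\ref{L:technical gamma}; the hypotheses $\ell\nmid 6\calL$ and $\ell\neq\Char R$ are exactly what keep $\ell$ invertible modulo the integers $2^N q^{-1+\chi}c_{E_m}\gamma$ that occur. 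The main obstacle is the construction in the middle step: showing that $\iota_*\calE[\ell]$ is compatible with restriction to fibres and that $\calF$ is genuinely lisse of rank $N$ — the ``family'' upgrade of \S\ref{S:elliptic background}, where tameness of the reduction and the proper and smooth base change theorems do the real work (this is precisely the type of input Hall's framework is built on). The rest — tracking how large $S$ must be made and checking that $D_0$ and $\calD$ behave as claimed — is routine.
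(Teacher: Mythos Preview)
Your proposal is correct and follows essentially the same construction as the paper: build a lisse orthogonally self-dual $\FF_\ell$-sheaf on $M$ whose fibre at each $m$ is $V_{E_m,\ell}$, then deduce (\ref{P:main construction a})--(\ref{P:main construction e}) from Propositions~\ref{P:main theta mod ell} and \ref{P:theta comp} applied fibrewise.

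The one place your argument is looser than the paper's is the proof that $\calF=R^1\bar\pi_*(\iota_*\calE[\ell])$ is lisse. You argue: $\iota_*$ commutes with base change (from tameness), so proper base change identifies the stalks of $R^i\bar\pi_*$ with the fibre cohomology, hence $R^0=R^2=0$ and $\calF$ has constant stalk dimension $N$. But constant stalk dimension of a constructible sheaf does not by itself force lisseness (consider $j_!\FF_\ell\oplus i_*\FF_\ell$ on $\Aff^1$); what is needed is that the cospecialization maps are isomorphisms, and this is exactly where Deligne's semicontinuity of the Swan conductor---the input you allude to as ``precisely the type of input Hall's framework is built on''---actually enters. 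The paper makes this step explicit by identifying its sheaf $\calG$ with the image of the natural map $R^1\bar\pi_!(\calF)\to R^1\bar\pi_*(\calF)$ and then citing \cite{MR629128}*{Corollaire~2.1.1} (Deligne, via Laumon) to show that $R^1\bar\pi_!(\calF)$ and $R^1\bar\pi_!(\calF^\vee)$ are lisse, whence so is the image. Once that is plugged in, your sketch is complete.
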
    

We now construct a lisse sheaf of $\FF_\ell$-modules that will give rise to the representation $\theta_\ell$ of Proposition~\ref{P:main construction}.  We have already defined the $R$-scheme $M=\Spec A$, where $A:=R[u, \Delta(u)^{-1}]$.  Set $C=\PP^1_{M}$; it is a smooth proper curve of genus $0$ over $M$ that can be obtained by extending $\Aff^1_M=\Spec A[t]$.    Define $U = \Spec A[t, (t-u)^{-1}, \Delta(t)^{-1}]$; it is an open $M$-subscheme of $C$.  After possibly enlarging the set $S$, we may  assume that the closed subscheme $D:=C-U$ of $C$ is \'etale over $M$.      

The Weierstrass equation
\[
(t-u) \yy^2= \xx^3 + a_2(t) \xx^2 +a_4(t) \xx +a_6(t).
\]
defines an elliptic curve $E\to U$.  Let $\calF:=E[\ell]$ be the $\ell$-torsion subscheme of $E$.  The morphism $\calF \hookrightarrow E \to U$ allows us to view $\calF$ as a lisse $\FF_\ell$-sheaf on $U$.   

Define the sheaf 
\[
\calG := R^1 \pi_*( j_*(\calF))
\] 
of $\FF_\ell$-modules on $M$, where $j\colon U \to C$ is the inclusion morphism and $\pi\colon C\to M$ is the structure morphism.  The Weil pairing gives an alternating pairing $\calF\times\calF \to \FF_\ell(1)$.  The cup product and this pairing on $\calF$ gives a symmetric pairing
\[
\calG \times \calG \to R^2 \pi_*(j_*(\calF)\otimes j_*(\calF)) \to R^2 \pi_*(j_*(\FF_\ell(1)))=\FF_\ell
\]
of sheaves on $M$.

\begin{lemma}
The sheaf $\calG$ is lisse.
\end{lemma}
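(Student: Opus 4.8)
The plan is to prove that $\calG = R^1\pi_*(j_*(\calF))$ is lisse on $M$ by checking that the formation of this higher direct image commutes with base change and that the fibers have constant rank. First I would recall the standard picture: $\pi\colon C = \PP^1_M \to M$ is proper smooth of relative dimension $1$, and $j\colon U \hookrightarrow C$ is an open immersion whose complement $D = C - U$ is, after shrinking $\Spec S^{-1}$ if necessary, finite \'etale over $M$. The sheaf $\calF = E[\ell]$ is a lisse $\FF_\ell$-sheaf on $U$, tamely ramified along $D$ because $\ell$ is invertible and the characteristic is prime to $6$ (so the monodromy around each point of $D$ is either trivial, or unipotent of the multiplicative type, or a finite tame quadratic/other twist of such, in any case tame). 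Hence $j_*\calF$ is a constructible $\FF_\ell$-sheaf on $C$ whose restriction to the fibers of $\pi$ behaves uniformly.

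The key steps, in order: (1) Reduce to a fiberwise statement. By the proper base change theorem, for every geometric point $\bar s \to M$ the natural map $(R^1\pi_*(j_*\calF))_{\bar s} \to H^1(C_{\bar s}, (j_*\calF)|_{C_{\bar s}})$ is an isomorphism, provided that the formation of $j_*\calF$ itself commutes with base change along $C_{\bar s} \hookrightarrow C$; this last point holds because $j_*\calF$ on each fiber is computed by the tame local monodromy invariants, which are locally constant in the family once $D/M$ is \'etale and the ramification is tame (this is where Lemma~\ref{L:independence of p}, i.e. the constancy of the multiset of Kodaira symbols, is exactly the input needed). (2) Having identified the stalks with $H^1(C_{\bar s}, j_{\bar s,*}(\calF|_{U_{\bar s}}))$, invoke the Grothendieck-Ogg-Shafarevich / Euler characteristic formula on the curve $C_{\bar s} \cong \PP^1$: $\dim H^0 - \dim H^1 + \dim H^2 = 2 \cdot 2 - \sum_x \mathfrak{f}_x$, where the conductor exponents $\mathfrak{f}_x$ are $2 - \dim(j_{\bar s,*}\calF)_{\bar x}$ at the tame bad points. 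By the argument already given in \S\ref{SS:proof theta mod ell} (or rather its family version), the right side equals $-N$ and the $H^0$ and $H^2$ vanish by Poincar\'e duality plus big monodromy (Proposition~\ref{P:elliptic big monodromy}, applied to the fiber $E_m$ for $m$ a closed point of $M_{\bar s}$, or to the geometric generic fiber). So every stalk has dimension exactly $N$, independent of $\bar s$. (3) Constant rank plus base-change-compatibility for a constructible sheaf on the (connected) base $M$ forces $\calG$ to be lisse: a constructible sheaf all of whose geometric stalks have the same dimension and whose formation commutes with base change is locally constant.

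The main obstacle — and the step deserving the most care — is (1), establishing that $j_*\calF$ commutes with base change to the fibers, equivalently that $R^1\pi_*$ of it does. This is not automatic for $Rj_*$ in general, but here it works because the sheaf is tamely ramified along an \'etale divisor $D/M$: the local contribution of $j_*$ at a point of $D$ is the space of inertia-invariants, and tameness makes the inertia action (a cyclic group of order prime to the characteristic acting through a character, or a quadratic twist thereof) deform flatly with the family, so the rank of invariants is locally constant. One clean way to package this is to first prove, using \cite{MR559531} or smooth/proper base change for the pair $(C,U)$, that $R^q\pi_*(j_*\calF) = 0$ for $q \neq 1$ and that $R^1\pi_*(j_*\calF)$ is constructible with base-change-compatible stalks; one can also note that, by Remark~\ref{R:alternate Vs}, $j_*\calF$ agrees with $\mathcal{E}[\ell]$ where $\mathcal{E}/C$ is the N\'eron model of $E/K(C/M)$, and the N\'eron model and its torsion form a nice family over $M$ once the Kodaira types are constant, which again uses Lemma~\ref{L:independence of p}. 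After that, the Euler-characteristic count and the vanishing of $H^0, H^2$ proceed exactly as in the proof of Proposition~\ref{P:main theta mod ell} applied fiberwise, and lissity follows formally.
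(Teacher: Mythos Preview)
Your approach differs from the paper's, and while the overall strategy is sound, step~(3) as written is not a valid general principle: a constructible sheaf on $M$ with all geometric stalks of the same dimension need not be lisse (consider $j'_!\FF_\ell \oplus i_*\FF_\ell$ on $\Aff^1_k$, where $j',i$ are the inclusions of $\G_m$ and the origin---every stalk is one-dimensional, yet the cospecialization at $0$ is zero). Proper base change for $\pi$ does identify the stalks of $\calG$ with fiberwise $H^1$'s, but that isomorphism is automatic and says nothing about the specialization maps of $\calG$ as a sheaf on $M$; your phrase ``formation commutes with base change'' is doing no additional work. To finish along your lines you would need to know that $j_*\calF$ is universally locally acyclic over $M$---which does follow from tameness of $\calF$ along the \'etale divisor $D$---and then invoke that proper pushforward of a ULA sheaf has lisse cohomology sheaves. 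That can be made to work, but it is the same depth of input as what the paper cites, not a consequence of the constant-rank count alone.

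The paper's route is shorter and sidesteps the base-change behaviour of $j_*$ entirely. Set $\bbar{\pi} = \pi \circ j\colon U \to M$. The low-degree Leray terms for $U \xrightarrow{j} C \xrightarrow{\pi} M$ give an injection $\calG \hookrightarrow R^1\bbar{\pi}_*\calF$; on the other side, since $j_*\calF/j_!\calF$ is supported on $D$, which is finite over $M$, the map $R^1\bbar{\pi}_!\calF \to \calG$ is surjective. Thus $\calG$ is the image of the natural map $R^1\bbar{\pi}_!\calF \to R^1\bbar{\pi}_*\calF$, and the image of a morphism of lisse sheaves is lisse. Lissity of $R^1\bbar{\pi}_!\calF$ and of $R^1\bbar{\pi}_!(\calF^\vee)$---hence of $R^1\bbar{\pi}_*\calF$ by Poincar\'e duality---is exactly Deligne's semicontinuity of the Swan conductor as packaged by Laumon \cite{MR629128}*{Corollaire~2.1.1}: the relevant function $\varphi$ is constant because $D/M$ is \'etale and $\calF$ is tame. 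This sandwich trick is worth internalizing; it converts a delicate local-acyclicity verification into an immediate formal consequence.
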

\begin{proof}
Define $\bbar{\pi}=\pi\circ j$; it is the structure morphism $U\to M$.   We can identify $\calG$ with a subsheaf of  $R^1 \bbar{\pi}_* (\calF)$; for example by using the low degree terms of the Leray spectral sequence.    The homomorphism $R^1\bbar{\pi}_! (\calF) := R^1 \pi_*(j_!(\calF))\to \calG$ induced by the inclusion $j_!(\calF)\subseteq j_*(\calF)$ is surjective (this uses that $D\to M$ has relative dimension $0$).   Therefore, $\calG$ is the image of a homomorphism $R^1\bbar{\pi}_! (\calF)  \to R^1\bbar{\pi}_* (\calF)$.   It thus suffices to prove that the sheaves $R^1 \pi_!(\calF)$ and $R^1 \pi_*(\calF)$ are lisse.   Using Poincar\'e duality, it suffices to prove that $R^1 \pi_!(\calF)$ and $R^1 \pi_!(\calF^\vee)$ are lisse.

The sheaves $R^1 \pi_! (\calF)$ and $R^1 \pi_! (\calF^\vee)$ of $\FF_\ell$-modules are lisse by Corollaire~2.1.1 of \cite{MR629128}; the function $\varphi$ of loc.~cit.~ is constant since $D\to M$ is \'etale and the sheaves $\calF$ and $\calF^\vee$ are tamely ramified (since $2$ and $3$ are invertible in $A$).
\end{proof}

Now take any $m\in M(k)$, where $k$ is a finite field of order $q$ that is an $R$-algebra.  Base changing by $m$, we obtain from $E\to U \to M$ an open subvariety $U_m$ of $C_m =\PP^1_k$ and an elliptic curve $E_m\to U_m$.   The generic fiber of $E_m\to U_m$ is an elliptic curve defined over $k(t)$ given by the equation (\ref{E:initial WE twist}) with $u$ substituted by $m$ which, by abuse of notation, we have already denoted by $E_m$.   

Let $\bbar m$ be a geometric point of $M$ lying over $m$ obtained from an algebraic closure $\bbar{k}$ of $k$.   Let $\calG_{\bbar{m}}$ be the fiber of $\calG$ at $\bbar{m}$; it is an $\FF_\ell$-vector space that comes with a symmetric pairing $\ang{\,}{\,}$ from specializing the pairing on $\calG$.     The geometric Frobenius $\Frob_m$ acts on $\calG_{\bbar{m}}$.   By proper base change,  we have
\begin{equation} \label{E:specialization of G at m}
\calG_{\bbar{m}}=H^1(C_{\bbar{m}}, j'_*(E_m[\ell]))=H^1(C_{\bbar{k}},j'_*(E_m[\ell])) = V_{E_m,\ell},
\end{equation}
where $j'\colon U_m \hookrightarrow C_m$ is the inclusion morphism; the last equality uses Remark~\ref{R:alternate Vs}.   The induced pairing on $\calG_{\bbar{m}}$ agrees with the pairing on $V_{E_m,\ell}$ from \S\ref{SS:Lfunctions mod ell}.    With respect to (\ref{E:specialization of G at m}), the action of $\Frob_m$ on $\calG_{\bbar{m}}$ corresponds to the action of $\Frob_q$ on $V_{E,\ell}$.
\\

Let $\bbar{\xi}$ be a geometric generic point of $M$.   Our pairing on $\calG$ is non-degenerate since $\calG$ is lisse and it is non-degenerate on the fiber $\calG_{\bbar{m}}$.  Denote by $V_\ell$ the fiber of $\calG$ at $\bbar{\xi}$ with its pairing; it is an orthogonal space over $\FF_\ell$.   The lisse sheaf $\calG$ thus gives rise to a continuous representation
\[
\theta_\ell \colon \pi_1(M,\bbar{\xi}) \to \Or(V_\ell).
\]
With $m\in M(k)$ above, we find that there is an isomorphism $\varphi\colon V_\ell \xrightarrow{\sim} V_{E,\ell}$ of orthogonal spaces such that $\varphi^{-1} \circ \theta_{E,\ell}(\Frob_q) \circ \varphi$ lies in the same conjugacy class of $\Or(V_\ell)$ as $\theta_{\ell}(\Frob_m)$.  All the properties of $\theta_\ell$ given in Proposition~\ref{P:main construction} are now direct consequences of Propositions~\ref{P:main theta mod ell} and \ref{P:theta comp}.
 
\subsection{Big monodromy} \label{SS:big monodromy Hall}

Fix notation and assumptions as in \S\ref{SS:twist setup}.  After possibly increasing $S$ when $R$ has characteristic $0$, let
\[
\theta_\ell\colon \pi_1(M)\to\Or(V_\ell)
\]
be the representation of Proposition~\ref{P:main construction}.   

Let $\Phi'$ be the multiset consisting of $\Phi$ with one symbol $\kappa_\infty$ removed.  Assume further that the following conditions hold:
\begin{itemize}
\item
$\Phi'$ contains $\I_n$ for some $n\geq 1$,
\item
$\Phi'$ contains more than one $\I_0^*$, 
\item
$6B \leq N$.
\end{itemize}
The following explicit version of a theorem of Hall \cite{MR2372151} says that the image under $\theta_\ell$ of the geometric fundamental group is big.

\begin{theorem}[Hall] \label{T:main big monodromy}
With assumptions as above, the group $\theta_\ell(\pi_1(M_{\bbar{F}}))$ contains $\Omega(V_\ell)$ and is not a subgroup of $\SO(V_\ell)$, where $\bbar{F}$ is an algebraic closure of the quotient field $F$ of $R$.
\end{theorem}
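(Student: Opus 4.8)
The plan is to deduce this from the big monodromy theorem of Hall \cite{MR2372151}; indeed the statement is essentially an explicit instance of Hall's result, so the task is to check that the three displayed hypotheses on $\Phi'$ and on $(B,N)$ supply the geometric input his criterion demands. Write $G:=\theta_\ell(\pi_1(M_{\bbar{F}}))$, so $G\subseteq\Or(V_\ell)$ and $\dim_{\FF_\ell}V_\ell=N$ by Proposition~\ref{P:main construction}. Hall's method proves $G\supseteq\Omega(V_\ell)$ in three steps: (a) exhibit a reflection inside $G$; (b) show that the subgroup $\calR\leq G$ generated by \emph{all} reflections of $G$ acts irreducibly on $V_\ell$; (c) appeal to the classification of the finite irreducible $\FF_\ell$-linear groups generated by reflections (for $\ell$ odd) to conclude that such a $\calR$ either contains $\Omega(V_\ell)$ or belongs to a short explicit list of small groups, every one of which is excluded once $N$ is large enough. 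Since $\calR\leq G$, this gives $G\supseteq\Omega(V_\ell)$, and the reflection from (a) has determinant $-1$, so $G\not\subseteq\SO(V_\ell)$ as well.

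First I would make the local structure of the lisse sheaf $\calG$ of \S\ref{SS:main rep} explicit. Compactify $M$ inside $\bbar{M}=\PP^1_{\bbar{F}}$; a boundary point is either $u=\infty$ or a root $r$ of $\Delta(u)$, and in the latter case letting $m\to r$ forces the moving twisting fibre of $E_m$ at $t=u$ --- always of type $\I_0^*$ --- to collide with the bad fibre of the fixed family at $t=r$. A Picard--Lefschetz analysis of this collision, carried out with Tate's algorithm and the behaviour of Kodaira symbols under a ramified quadratic twist ($\I_0\leftrightarrow\I_0^*$, $\I_n\leftrightarrow\I_n^*$, $\II\leftrightarrow\IV^*$, $\III\leftrightarrow\III^*$, $\IV\leftrightarrow\II^*$), computes the local monodromy of $\calG$ at $u=r$ on $V_\ell$ together with its ``drop'' --- the codimension of its fixed subspace --- which one reads off as $N-N'$, where $N'$ is the analogue of $N$ for the twist of the fixed family by $t-r$. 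The point is that a multiplicative fibre $\I_n$ ($n\geq1$) of the fixed family yields drop exactly $1$ with a non-isotropic vanishing cycle, hence a genuine reflection in $G$. This is precisely what the hypothesis ``$\Phi'$ contains $\I_n$ for some $n\geq1$'' provides, and it settles step (a) and the claim $G\not\subseteq\SO(V_\ell)$.

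Steps (b) and (c) are the substance of Hall's theorem; I would cite \cite{MR2372151} for them, having recorded the local monodromies above, but it is worth indicating where the remaining two hypotheses enter. For (b), were $\calR$ reducible, then --- $\calR$ being reflection-generated --- $V_\ell$ would split orthogonally into a fixed part and irreducible reflection blocks $V_1\perp\dots\perp V_k$ with this decomposition nontrivial, and every other local monodromy generator of $G$ (all of which normalise $\calR$) would have to permute and preserve it. The assumption that $\Phi'$ contains more than one $\I_0^*$ forces the fixed family to carry an $\I_0^*$ fibre, whose collision with the moving $\I_0^*$ contributes a large-drop orthogonal element of very rigid shape; together with the reflection from step (a) these produce reflections whose axes are too entangled for such a decomposition to survive, while the inequality $6B\leq N$ bounds the total drop contributed by the remaining (additive, non-$\I_0^*$) fibres --- at most $6B$ --- so that the ``non-reflection'' part of the monodromy is too thin to account for a reducible $\calR$. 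The same inequality eliminates the possibilities in (c): the exceptional finite irreducible reflection groups (exceptional Weyl and complex reflection groups, and the symmetric groups $S_{N+1}$, $S_{N+2}$ in their reflection representations) are each ruled out by $6B\leq N$ together with the configuration of available reflections. Combining (a)--(c) gives $G\supseteq\calR\supseteq\Omega(V_\ell)$ and $G\not\subseteq\SO(V_\ell)$. The genuinely hard part will be the irreducibility in step (b) --- as the introduction notes, turning ``the available reflections are too entangled to lie in a proper $\calR$-stable orthogonal sum'' into a contradiction, uniformly in $\ell$, is the heart of the matter; steps (a) and (c) reduce, respectively, to a finite check with Tate's algorithm and an application of the reflection-group classification.
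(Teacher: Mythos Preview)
Your overall plan --- reduce to a group-theoretic criterion of Hall applied to the local monodromy generators --- is the right one, and is what the paper does. But two of the three displayed hypotheses are not playing the roles you assign them, and one genuine input is missing from your sketch.

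First, the missing input: you never say why $G=\theta_\ell(\pi_1(M_{\bbar F}))$ itself acts irreducibly on $V_\ell$. In your step~(b) you try to argue irreducibility of $\calR$ directly from ``entanglement'' of reflection axes, but Hall's argument (and the paper's) runs the other way: one first knows $G$ is irreducible, and then deduces irreducibility of $\calR$ via the orthogonal decomposition lemma (the $\{gW\}$ argument). The irreducibility of $G$ is not a formal consequence of the local pictures; in the paper it comes from identifying the sheaf $\calG$ with a middle convolution $\MC_{-1}(\calE_1[\ell])$ over $M_{\bbar k}$ (after reducing to finite base field), which is known to be geometrically irreducible and tame. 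Without this geometric input your step~(b) does not get off the ground.

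Second, the $\I_0^*$ hypothesis. The condition ``$\Phi'$ contains more than one $\I_0^*$'' is arranged so that the \emph{untwisted} family $E_1$ has an $\I_0^*$ fibre at some $c\in Z$; at such a $c$ the local monodromy $\theta_\ell(\sigma_c)$ is an \emph{isotropic shear}, i.e.\ a nontrivial unipotent element with $(\theta_\ell(\sigma_c)-I)^2=0$. This is not a ``large-drop'' element --- in fact $\Drop(\theta_\ell(\sigma_c))\leq 2$ for every $c$ --- and its role is purely group-theoretic: Hall's criterion (the paper's Proposition~\ref{P:monodromy criterion}, Hall's Theorem~3.1 with $r=2$) requires as input both a reflection and an isotropic shear in $G$, and it is the isotropic shear that eliminates the short list of exceptional reflection groups in your step~(c).

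Third, the inequality $6B\leq N$. This does not bound a ``total drop''; rather, $B$ bounds the cardinality of the subset $\calS_0\subseteq\calS$ of monodromy generators at points where $E_1$ has additive reduction of type $\neq\I_0^*$, and Hall's criterion needs $6|\calS_0|\leq\dim_{\FF_\ell}V_\ell$. Every generator in $\calS-\calS_0$ is then either a reflection (multiplicative fibres), trivial, or an $\ell$-th power unipotent (the $\I_0^*$ case), hence of order prime to $6$; this is condition~(b) of Proposition~\ref{P:monodromy criterion}.

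In short, the paper's proof is: reduce to $R=k$ a finite field by specialisation; identify $\calG$ with a middle convolution to get irreducibility and tameness of $\theta_\ell$; read off from Hall's Lemma~6.5 that the tame generators $\theta_\ell(\sigma_c)$ are reflections at $\I_n$ points, isotropic shears at $\I_0^*$ points, and always have drop $\leq 2$; feed these into Proposition~\ref{P:monodromy criterion} with $|\calS_0|\leq B$. Your Picard--Lefschetz analysis of collisions is a reasonable heuristic for the local monodromies, but the precise statements you need are those just listed, and the irreducibility step requires the middle-convolution identification rather than an ad hoc argument.
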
    

In \S\ref{SS:Hall sketch}, we will sketch some of the steps in Hall proof of Theorem~\ref{T:main big monodromy}.    The main reason for doing this is to ensure that all the conditions are explicit (in \cite{MR2372151}, one is allowed to replace the original curve by a suitably high degree twist so that the last two conditions before the statement of the theorem hold).  We will also need to refer to some of the details when handling the $n=5$ case of Theorem~\ref{T:Omega total}.

\subsection{Sketch of Theorem~\ref{T:main big monodromy}} \label{SS:Hall sketch}
First suppose that $R=\ZZ[S^{-1}]$ and let $\calG$ be the lisse sheaf on $M$ from \S\ref{SS:main rep}.   Take any $p\notin S$ and let $\theta_{p,\ell}\colon \pi_1(M_{\FF_p})\to \Or(V_\ell)$ be the representation obtained by specializing $\theta_\ell$, equivalently $\calG$, at the fiber of $M$ above $p$.   Since the formation of $\calG$ commutes with arbitrary base change, the representation $\theta_{p,\ell}$ agrees with the representation arising from the setup with \S\ref{SS:twist setup} by starting with the same Weierstrass equation except replacing $R$ by $\FF_p$.  For $p\notin S$ sufficiently large, all the conditions of Theorem~\ref{T:main big monodromy} hold.    Since $\theta_{p,\ell}(\pi_1(M_{\FFbar_p}))$ agrees with $\theta_\ell(\pi_1(M_{\Qbar}))$ for all sufficiently large $p$, it thus suffices to prove the theorem in the case where $R$ is a finite field.\\

Now assume that $R$ is a finite field $k$ whose characteristic is at least $5$.   We now describe the setup and key results of Hall from \S6 of \cite{MR2372151}.

Set $C=\PP^1_{k}$ and denote its function field by $K:=k(t)$.    Let $E_1$ be the elliptic curve over $K$ defined by (\ref{E:initial WE}).   For each non-zero polynomial $f\in k[t]$, let $E_f$ be the elliptic curve over $K$ obtained by taking the quadratic twist of $E_1$ by $f$.    \emph{Warning:} we are following Hall's notation throughout \S\ref{SS:Hall sketch}; the curve $E_{t-m}$ is denoted elsewhere in the paper by $E_m$.

We have $\ell\geq 5$ and $\ell$ is invertible in $k$.    The $j$-invariant $j_{E_1}\in K$ of $E_1$ is the same as the $j$-invariant of each $E_f$.  Therefore, $\calL_{E_f}$ is independent of $f$ and hence agrees with $\calL$.   In particular, our assumption $\ell\nmid 6\calL$ implies that $\ell$ does not divide $\max\{1,-v_x(j_{E_1})\}$ for all $x\in |C|$.    In \cite{MR2372151}*{\S6}, it is also assumed that $\ell$ is chosen so that the Galois group of the extension $K(E_1[\ell])/K$ contains a subgroup isomorphic to $\SL_2(\FF_\ell)$; however, this is a consequence of $\ell\nmid 6q\calL$ and Proposition~\ref{P:elliptic big monodromy}.

Let $\calE_f\to C$ be the N\'eron model of $E_f/K$ and let $\calE_{f}[\ell]$ be its $\ell$-torsion subscheme.  We have $V_{E_f,\ell}=H^1(C_{\bbar{k}}, \calE_{f}[\ell])$ by Remark~\ref{R:alternate Vs}
\\

For each integer $d \geq 0$, we let $F_d$ be the open subvariety of $\Aff_{k}^{d+1}$ consisting of tuples $(a_0,\ldots, a_d)$  for which the polynomial $\sum_{i=0}^d a_i t^i$ is separable of degree $d$ and relatively prime to $\Delta(t)\in k[t]$.   For each extension $k'/k$, we will identify each point $f\in F_d(k')$ with the corresponding degree $d$ polynomial in $k'[t]$.

Now assume that $d\geq 1$.  As noted in \cite{MR2372151}, there is an orthogonally self-dual lisse sheaf $\calT_{d,\ell}\to F_d$ of $\FF_\ell$-modules such that for any finite extension $k'\subseteq \bbar{k}$ of $k$ and any $f \in F_d(k')$, the (geometric) fiber of $\calT_{d,\ell}$ above $f$ is $H^1(C_{\bbar{k}}, \calE_{f}[\ell])=V_{E_f,\ell}$.  Moreover, the pairing on $\calT_{d,\ell}$ agrees with the pairing from \S\ref{SS:Lfunctions mod ell} on the fibers $V_{E_f,\ell}$.

Fix a polynomial $g\in F_{d-1}(k)$.  Let $U_g$ be the open subvariety of $\Aff^1_{k}$ consisting of $c$ for which $\Delta(c)g(c)\neq 0$.   We view $U_g$ as a closed subvariety of $F_d$ via the closed embedding $\varphi\colon U_g\hookrightarrow F_d$, $c\mapsto (c-t) g(t)$.   We then have an orthogonally self-dual lisse sheaf $\varphi^* (\calT_{d,\ell})$ of $\FF_\ell$-modules on $U_g$.   

In \S6.3 of \cite{MR2372151}, it is noted that $\varphi^*(\calT_{d,\ell})$ over $U_{g,\bbar{k}}$ agrees with the middle convolution sheaf $\MC_{-1}(\calE_g[\ell])$; this has the consequence that the sheaf $\varphi^*(\calT_{d,\ell})$ is geometrically irreducible and tame.\\

We now focus on the case with $d=1$ and $g=-1$.    The variety $U_g$ in $\Aff^1_{k}=\Spec k[u]$ is equal to $M=\Spec k[u,\Delta(u)^{-1}]$.   For each finite extension $k'/k$ and $m\in M(k')$, the (geometric) fiber of $\varphi^*(\calT_{d,\ell})$ above $m$ is $H^1(C_{\bbar{k}}, \calE_{t-m}[\ell])=V_{E_{t-m},\ell}$ (which is $V_{E_m,\ell}$ in the notation of \S\ref{SS:twist setup}).   

We find that the sheaf $\varphi^*(\calT_{d,\ell})$ over $M=U_g$ is precisely our sheaf $\calG$ from \S\ref{SS:main rep} and they have the same pairing.  We record the follow consequence for $\theta_\ell$.

\begin{lemma} \label{L:tame and irreducible}
The representations $\theta_\ell\colon \pi_1(M) \to \Or(V_\ell)$ is geometrically irreducible and tame. \qed
\end{lemma}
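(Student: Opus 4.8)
The plan is to read the lemma off from the sheaf-theoretic facts assembled just above, via the standard dictionary between lisse $\FF_\ell$-sheaves on $M$ and continuous representations of $\pi_1(M)$ on finite-dimensional $\FF_\ell$-vector spaces: under this correspondence the rank-$N$ sheaf $\calG$ of \S\ref{SS:main rep} is exactly $\theta_\ell$. \emph{Geometric irreducibility} of $\calG$ --- irreducibility of its restriction to $M_{\bbar k}$ --- means precisely that the geometric monodromy image $\theta_\ell(\pi_1(M_{\bbar k}))$ acts irreducibly on $V_\ell$; and \emph{tameness} of $\calG$ --- triviality of the wild inertia at each of the finitely many points in the complement of $M$ in its smooth compactification $\PP^1_{\bbar k}$ --- is exactly tameness of $\theta_\ell$. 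So it suffices to check that $\calG$ is geometrically irreducible and tame (the case $R=\ZZ[S^{-1}]$ then follows from the specialization argument at the beginning of \S\ref{SS:Hall sketch}).

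For this I would invoke the two identifications recalled above: that $\calG$ coincides with $\varphi^*(\calT_{1,\ell})$ on $M=U_g$ for $g=-1$, and Hall's observation in \cite{MR2372151}*{\S6.3} that, geometrically, $\varphi^*(\calT_{1,\ell})|_{M_{\bbar k}}$ is the middle convolution $\MC_{-1}(\calE_g[\ell])$ (with $g=-1$) of the middle-extension sheaf $\calE_g[\ell]$ on $\PP^1_{\bbar k}$ --- the $\ell$-torsion of the N\'eron model of the quadratic twist of $E_1$ by $-1$. The input sheaf $\calE_g[\ell]$ is tamely ramified, since $\ell\geq 5$ and $\gcd(q,6)=1$; it is geometrically irreducible, because its geometric monodromy group is $\SL_2(\FF_\ell)$ by Proposition~\ref{P:elliptic big monodromy} (the twist $E_g$ has the same $j$-invariant, hence the same $\calL$, as $E_1$, so the hypothesis $\ell\nmid 6\calL$ still applies) and $\SL_2(\FF_\ell)$ acts irreducibly on its standard two-dimensional module; and it is not punctual, not geometrically a Kummer sheaf, and has no nonzero geometric global sections (again from the $\SL_2(\FF_\ell)$-monodromy, using Remark~\ref{R:alternate Vs} to identify $\calE_g[\ell]$ with $j_*(E_g[\ell])$). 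Katz's theory of middle convolution then guarantees that $\MC_{-1}$ of such a sheaf is again tame and geometrically irreducible --- precisely the computation made in \cite{MR2372151}*{\S6.3}. Since $\calG$ is already known to be lisse on all of $M$, irreducibility of the middle-convolution perverse sheaf over $M_{\bbar k}$ is the same as irreducibility of the lisse sheaf $\calG$ over $M_{\bbar k}$, which gives both assertions.

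The one step that genuinely needs attention --- and the main obstacle --- is the appeal to middle convolution: one must confirm that $\calE_g[\ell]$ satisfies the non-degeneracy hypotheses (not punctual, not geometrically Kummer, no global invariants) under which $\MC_{-1}$ preserves tameness and geometric irreducibility, and one must reconcile Katz's perverse-sheaf formulation of $\MC$ with the lisse-sheaf and $\pi_1$-representation language used here. Both of these are handled by Hall in \cite{MR2372151}*{\S6.3}, so the proof ultimately amounts to correct cross-referencing rather than any new argument; this is why the statement can be recorded with essentially no further proof.
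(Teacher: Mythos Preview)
Your proposal is correct and follows essentially the same approach as the paper: both identify $\calG$ with $\varphi^*(\calT_{1,\ell})$ on $M=U_g$ for $g=-1$, invoke Hall's identification of this sheaf (geometrically) with the middle convolution $\MC_{-1}(\calE_g[\ell])$ from \cite{MR2372151}*{\S6.3}, and read off tameness and geometric irreducibility as consequences. You have supplied more detail than the paper about why the input sheaf $\calE_g[\ell]$ meets the hypotheses needed for middle convolution to preserve these properties, but the paper simply records the lemma with a \qed immediately after the preceding discussion, treating it as a direct consequence of Hall's results.
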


Since $\theta_\ell$ is tamely ramified, its restriction to $\pi_1(M_{\bbar{k}})$ factors through the maximal tame quotient $\pi^t_1(M_{\bbar{k}})$ of $\pi_1(M_{\bbar{k}})$.  Let $Z$ be the set of $\bbar{k}$-points of $\Aff^1_{k}-M$; it consists of the $c\in \bbar{k}$ for which $\Delta(c)=0$.   For each point $c\in Z \cup \{\infty\}$, let $\sigma_c$ be a generator of an inertia subgroup $\pi^t_1(M_{\bbar{k}})$ at $c$.  Choosing an ordering of the points $Z\cup\{\infty\}$, we may assume that the $\sigma_c$ are taken so that the product of the $\sigma_c$, with respect to the ordering, is trivial.    

The group $\pi^t_1(M_{\bbar{k}})$ is (topological) generated by $\{\sigma_c: c \in Z\}$; we do not need $\sigma_\infty$ since the product of the $\sigma_c$ is trivial.   In particular, $\{\theta_\ell(\sigma_c): c \in Z \}$ generates the group $\theta_\ell(\pi_1(M_{\bbar{k}}))$.

We need two quick group theory definitions.   For each $A\in \Or(V_\ell)$, we define $\Drop(A)$ to be the codimension in $V_\ell$ of the subspace fixed by $A$.   We say that an element $A\in \Or(V_\ell)$ is an \defi{isotropic shear} if it is non-trivial, unipotent and satisfies $(A-I)^2=0$.  

\begin{lemma} \label{L:G gen}
Fix a point $c \in Z$ and let $\kappa$ be the Kodaira symbol of $E_1/K$ at $t=c$.
\begin{romanenum}
\item \label{L:monodromy c 0} 
If $\kappa=\I_0$, then $\theta_{\ell}(\sigma_c)=I$.
\item \label{L:monodromy c a} 
If $\kappa=\I_n$ for some $n\geq 1$, then $\theta_{\ell}(\sigma_c)$ is a reflection.
\item \label{L:monodromy c b}
If $\kappa=\I_0^*$, then $\theta_{\ell}(\sigma_c)$ is an isotropic shear.
\item \label{L:monodromy c c}
We have $\Drop(\theta_{\ell}(\sigma_c))\leq 2$.
\end{romanenum}
\end{lemma}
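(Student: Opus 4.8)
The plan is to identify the local monodromy of $\theta_\ell$ at a point $c\in Z$ with the local monodromy of the sheaf $j_*(E_1[\ell])$ (equivalently $\calE_1[\ell]$) on $C$ at the point $t=c$ of bad reduction, after passing through the middle-convolution description $\calG \cong \varphi^*(\calT_{1,\ell}) \cong \MC_{-1}(\calE_{-1}[\ell])$ recalled in \S\ref{SS:Hall sketch}. Concretely, since $g=-1$ has no zeros, the inertia behavior of $\calG$ at $c\in Z$ (where $\Delta(c)=0$) is governed by the inertia action of $\pi_1^t$ at the corresponding bad fiber of the elliptic surface; the middle convolution $\MC_{-1}$ preserves the local monodromy at points other than $0,1,\infty$ up to the usual normalization, and $c\ne 0$ since $\Delta(0)\ne 0$ was arranged by enlarging $S$ (or is a hypothesis in the finite-field case; if needed one first translates so that $0\notin Z$). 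So the Jordan type of $\theta_\ell(\sigma_c)$ on $V_\ell$ matches the Jordan type of the local monodromy of $\calE_1[\ell]$ on $H^1$ at $t=c$, which is exactly the image of a generator of tame inertia under $\rho_{E_1,\ell}$ acting on the stalk $E_1[\ell]_{\bar\eta}$, recorded via the $f_x, \gamma_x$ data.

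The key computation is then the classical one: for an elliptic curve over a local field of residue characteristic $\ne \ell$ and tame reduction, a generator of tame inertia acts on the $2$-dimensional space $E_1[\ell]_{\bar\eta}$ as a quasi-unipotent matrix whose order is $\lambda_x\cdot(\text{something dividing }6)$, and whose unipotent part is a single Jordan block of size $1+\min\{1,v_x(j)_{-}\}$. For $\kappa=\I_0$, inertia acts trivially on $E_1[\ell]$, hence on $V_\ell$, giving $\theta_\ell(\sigma_c)=I$; this is (\ref{L:monodromy c 0}). For $\kappa=\I_n$ with $n\ge1$ (multiplicative reduction), the inertia image is unipotent with a single $2\times2$ Jordan block, so on $V_\ell=H^1(C_{\bar k}, j_*(E_1[\ell]))$ the local monodromy at $c$ is a transvection, which in the orthogonal group is a reflection; this is (\ref{L:monodromy c a}). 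For $\kappa=\I_0^*$ (additive reduction, potentially good, $v_x(j)\ge 0$), inertia acts through $\{\pm I\}$ on $E_1[\ell]$; the resulting local monodromy on $H^1$ is unipotent with $(A-I)^2=0$ and $A\ne I$ (the drop is $2$), i.e.\ an isotropic shear in the sense defined in \S\ref{SS:Hall sketch}; this is (\ref{L:monodromy c b}). For the remaining tame Kodaira types $\II,\III,\IV,\IV^*,\III^*,\IIs,\I_n^*\,(n\ge1)$, one checks case by case from the table in \S\ref{SS:some invariants} that $\Drop(\theta_\ell(\sigma_c))\le 2$ (it equals $r_x(E_1)\le 2$ away from the $\IV$/$\IV^*$ anomaly where one argues directly that the fixed space still has codimension $\le 2$), which together with the previous cases gives (\ref{L:monodromy c c}).

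The main obstacle is making the passage from ``local monodromy of the elliptic sheaf on the surface'' to ``local monodromy of the convolution sheaf $\calG$ on $M$ at $c$'' genuinely rigorous, i.e.\ tracking exactly how $\MC_{-1}$ (equivalently, the cohomological pushforward $R^1\pi_* j_*$ in the family) transforms the local inertia representation at $c$, and confirming it preserves the Jordan type (and does not, say, introduce an extra trivial summand that would change $\Drop$). This is essentially the content of the relevant local-monodromy computations in \cite{MR2372151}*{\S6}, and the cleanest route is to cite those directly: the identification $\varphi^*(\calT_{1,\ell})\cong\calG$ is already in \S\ref{SS:Hall sketch}, so (\ref{L:monodromy c 0})--(\ref{L:monodromy c b}) and the bound (\ref{L:monodromy c c}) follow from Hall's analysis of the inertia generators $\sigma_c$ for the corresponding Kodaira symbols. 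Everything else is the bookkeeping of reading $\Drop$ off the $r_x$ column of the table, which is routine.
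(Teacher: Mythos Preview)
Your bottom line---cite Hall's Lemma~6.5---is exactly what the paper does (adding only that $E_1$ and $E_{-1}$ share Kodaira symbols, so the statement in loc.\ cit.\ for $E_g=E_{-1}$ transfers). The heuristic you offer before that, however, contains genuine errors and would not stand as an independent argument.

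The central mistake is the claim that $\MC_{-1}$ ``preserves the local monodromy'' and that the Jordan type of $\theta_\ell(\sigma_c)$ on $V_\ell$ ``matches'' that of inertia on the $2$-dimensional stalk $E_1[\ell]_{\bar\eta}$. Middle convolution \emph{transforms} local monodromy according to a fixed rule; it does not preserve it. In particular, for $\I_n$ the input monodromy is the unipotent block $J(1,2)$, and $\MC_{-1}$ sends this to a single eigenvalue $-1$ (plus identity on the rest of $V_\ell$)---a genuine reflection with determinant $-1$. Your phrase ``a transvection, which in the orthogonal group is a reflection'' conflates two incompatible objects: a reflection has order $2$ and determinant $-1$, while a transvection is unipotent with determinant $1$; in fact $\Or(V_\ell)$ in odd characteristic contains no nontrivial rank-one unipotents at all. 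Likewise for $\I_0^*$: the input monodromy $-I$ is not unipotent, yet the output is an isotropic shear; this is again the effect of $\MC_{-1}$, not a ``matching'' of Jordan types.

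The assertion that $\Drop(\theta_\ell(\sigma_c))=r_x(E_1)$ is also false: for types $\IV$ and $\IV^*$ one has $r_x=3$, but the drop is $2$. The correct uniform statement, and what one extracts from the \emph{proof} of Hall's Lemma~6.5 for part~(\ref{L:monodromy c c}), is that the drop of the output at $c$ equals $\operatorname{rank}(A_c-I)$ on the $2$-dimensional input, hence is at most $2$ regardless of the Kodaira type.
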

\begin{proof}
The is a consequence of Lemma~6.5 of \cite{MR2372151} and also its proof for part (\ref{L:monodromy c c}). It is actually stated for $E_g=E_{-1}$ in loc.~cit., but $E_1$ and $E_{-1}$ have the same Kodaira symbols).
\end{proof}

\begin{remark} \label{R:foreshadow}
For later, we note that up to this point we have not made use of the three additional assumptions from \S\ref{SS:big monodromy Hall}.
\end{remark}

The following group theoretic result is a special case of Theorem~3.1 of \cite{MR2372151} with $r=2$.  

\begin{proposition} \label{P:monodromy criterion}
Let $G$ be an irreducible subgroup of $\Or(V_{\ell})$ generated by a set $\calS$.   Assume that $G$ contains a reflection and an isotropic shear.  Suppose that there is a subset $\calS_0\subseteq \calS$ satisfying the following properties:
\begin{alphenum}
\item
$\Drop(A) \leq 2$ for every $A\in \calS$,
\item
every $A\in \calS-\calS_0$ has order relative prime to $6$ or is a reflection,
\item
$6|\calS_0| \leq \dim_{\FF_\ell} V_\ell$.
\end{alphenum}
Then $G$ contains $\Omega(V_{\ell})$ and is not a subgroup of $\SO(V_{\ell})$.
\end{proposition}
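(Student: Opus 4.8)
The plan is the following. That $G$ is not contained in $\SO(V_\ell)$ is immediate: $G$ contains a reflection, and reflections have determinant $-1$. So the content of the proposition is the inclusion $G \supseteq \Omega(V_\ell)$. I would first pass to the subgroup $\calR \trianglelefteq G$ generated by all reflections of $G$; it is nontrivial, and since $\calR \subseteq G$ it suffices to prove $\calR \supseteq \Omega(V_\ell)$.

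The heart of the matter is to prove that $\calR$ acts irreducibly on $V_\ell$. Since $\calR$ is normal in $G$ and $G$ acts irreducibly, Clifford theory gives an orthogonal decomposition $V_\ell = V_1 \oplus \cdots \oplus V_k$ into the $\calR$-isotypic components, which $G$ permutes transitively; because $\calR$ is generated by reflections one checks that the $V_i$ are mutually orthogonal, nondegenerate and $\calR$-irreducible, and (using transitivity, since every reflection of $G$ acts nontrivially on exactly one component) that they share a common dimension $d$. If a generator $A \in \calS$ permutes the components with a cycle of length $m \geq 2$, then its fixed space on the span of that cycle has dimension at most $d$, so $\Drop(A) \geq (m-1)d \geq d$; by hypothesis (a) this forces $d \leq 2$, and if $d \geq 3$ then no generator moves a component, whence $G$ preserves each $V_i$ and irreducibility of $G$ gives $k = 1$. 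So one may assume $d \in \{1,2\}$, in which case $\calR$ is conjugate to a direct product over the $V_i$ of copies of $\{\pm I\}$ or of irreducible dihedral groups, and $G \subseteq N_{\Or(V_\ell)}(\calR)$, which is a group of signed permutation matrices when $d=1$ and lies in $\bigl(\prod_i \Or(V_i)\bigr) \rtimes \Sym_k$ when $d=2$. For $\ell \geq 5$ such a normalizer contains no isotropic shear: an isotropic shear is a square-zero unipotent element, $\Or_1(\FF_\ell)$ and $\Or_2(\FF_\ell)$ have order prime to $\ell$ and hence no nontrivial unipotents, and any nontrivial unipotent element whose component-permuting part is nontrivial has a Jordan block of size divisible by $\ell$, hence larger than $2$. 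This contradicts the assumption that $G$ contains an isotropic shear, so $k=1$ and $\calR$ is irreducible. (Here hypotheses (a) and (b) are what confine all component-permuting generators to $\calS_0$, since an element of order prime to $6$ moving a component would have a cycle of length at least $5$ and hence drop at least $4$; hypothesis (c) supplies the analogous quantitative control in Hall's general formulation.)

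With $\calR$ irreducible I would invoke the classification of finite irreducible subgroups of $\GL(V_\ell)$ generated by reflections, available since $\ell \geq 5$: such a group either contains $\Omega(V_\ell)$ — in which case we are done — or is conjugate to one from an explicit finite list of groups of characteristic-zero origin, namely the reflection representations, reduced modulo $\ell$, of the Weyl groups of types $A$, $B/C$, $D$, $E_6$, $E_7$, $E_8$, $F_4$, $G_2$, together with $H_3$, $H_4$, and the dihedral groups $I_2(m)$. For each of these one checks, using $\ell \geq 5$, that neither it nor its normalizer in $\GL(V_\ell)$ contains an isotropic shear: in the infinite families an element of order $\ell$ is, up to conjugacy and sign twists, a product of $\ell$-cycles in the underlying permutation action, whose reduction modulo $\ell$ has a Jordan block of size $\ell-1$ or $\ell$, both larger than $2$; the dihedral groups have order prime to $\ell$; and the remaining exceptional types are a finite case-check (for small $\ell$ some of these characteristic-zero groups in fact reduce to groups already containing $\Omega(V_\ell)$). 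Since $G \subseteq N_{\GL(V_\ell)}(\calR)$, this again contradicts the presence of an isotropic shear, so $\calR \supseteq \Omega(V_\ell)$, as desired; this reproduces Theorem~3.1 of \cite{MR2372151} in the case $r=2$.

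The step I expect to be the main obstacle is the irreducibility of $\calR$: the Clifford-theoretic bookkeeping must be carried out so that the orthogonal form is correctly accounted for — in particular that the isotypic components are self-dual and mutually orthogonal, which genuinely uses that $\calR$ is generated by reflections — and the low-dimensional block cases $d \in \{1,2\}$, which the drop inequality alone does not exclude, have to be eliminated by appealing to the isotropic shear rather than to the counting hypotheses. A secondary point requiring care is the elimination of the exceptional reflection groups when $\ell$ is as small as $5$ or $7$, where one must inspect the finitely many groups of exceptional type individually.
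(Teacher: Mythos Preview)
The paper gives no proof of this proposition; it simply records it as the case $r=2$ of Theorem~3.1 of \cite{MR2372151}.  What you have written is therefore a reconstruction of Hall's argument, and its overall architecture---pass to the reflection subgroup $\calR\trianglelefteq G$, prove $\calR$ acts irreducibly, then invoke the Zalesski{\u\i}--Sere{\v{z}}kin classification and use the isotropic shear to eliminate the finite list of characteristic-zero reflection groups---is correct and is indeed how Hall proceeds.

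Where your sketch diverges from Hall is in the irreducibility step, and this is also where your parenthetical about hypotheses (b) and (c) becomes garbled.  In Hall's argument it is precisely (b) and (c) that force $\calR$ to be irreducible, via a counting argument on the Clifford block decomposition: generators in $\calS\setminus\calS_0$ cannot move any block (a reflection fixes every block, and an element of order prime to $6$ moving a block would exhibit a cycle of length at least $5$, hence $\Drop\geq 4>2$), so only the generators in $\calS_0$ can connect distinct blocks; since each has $\Drop\leq 2$ it contributes at most two edges to the connectivity graph on the $k$ blocks, and the inequality $6|\calS_0|\leq\dim_{\FF_\ell}V_\ell=kd$ then leaves too few edges for transitivity unless $k=1$.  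You instead dispose of the imprimitive cases $d\in\{1,2\}$ directly by observing that $\bigl(\prod_i\Or(V_i)\bigr)\rtimes\mathfrak{S}_k$ contains no isotropic shear when $\dim V_i\leq 2$: since $|\Or_1(\FF_\ell)|$ and $|\Or_2(\FF_\ell)|$ are prime to $\ell$, any order-$\ell$ element must permute blocks with an $\ell$-cycle and hence carries a Jordan block of size $\ell>2$.  That analysis is correct and, for $r=2$, actually establishes irreducibility without ever invoking (b) or (c); the isotropic shear is doing double duty in your version.  This is a genuine simplification specific to $r=2$ (it fails for larger $r$ once $\Or_d(\FF_\ell)$ itself has unipotents), but it means your parenthetical trying to locate (b) and (c) in the argument is simply misplaced and should be dropped rather than patched.
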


We can now finish our sketch of Theorem~\ref{T:main big monodromy}.   Define the group $G:=\theta_\ell(\pi_1(M_{\bbar{k}}))$; it is irreducible since $\theta_\ell$ is geometrically irreducible.  The group $G$ is generated by the set $\calS:=\{\theta_\ell(\sigma_c): c \in Z\}$.   Let $\calS_0$ be the set of $\theta_\ell(\sigma_c)$ with $c\in Z$ for which $E_1$ has additive reduction at $t=c$ and the Kodaira symbol at $t=c$ is not $\I_0^*$. 

We may assume that $M(k)$ is non-empty (we may always replace $k$ by a finite extension at the beginning).  Fix any $m\in M(k)$.    The curve $E_{t-m}/K$ is a quadratic twist of $E_1/K$ by $t-m$.  Hence $E_{t-m}$ and $E_1$ have the same Kodaira symbol at each closed point $x$ of $\Aff^1_{k}$ except for one (correspond to the point $t=m$) for which $E_{t-m}$ has Kodaira symbol $\I_0^*$ and $E_1$ has good reduction.

The two assumptions of \S\ref{SS:big monodromy Hall} on $\Phi'$ imply that $E_1$ has multiplicative reduction at some $c\in Z$ and Kodaira symbol $\I_0^*$ at some $c\in Z$.    From Propositions~\ref{L:G gen}(\ref{L:monodromy c a}) and (\ref{L:monodromy c b}), we deduce that $G$ contains a reflection and an isotropic shear.  

Proposition~\ref{L:G gen}(\ref{L:monodromy c c}) implies that $\Drop(A)\leq 2$ for all $A\in \calS$.  For every $A\in \calS-\calS_0$, Proposition~\ref{L:G gen} implies that either $A$ is a reflection or that the order of $A$ is a power of $\ell$ (which is relatively prime to $6$ since $\ell\geq 5$).

We have $|\calS_0| \leq \sum_{x\neq \infty} b_x(E_1) \deg x$, where the sum is over the closed points $x$ of $\Aff^1_{k}$.  By our comparison of the Kodaira symbols of $E_1$ and $E_{t-m}$, we deduce that $|\calS_0| \leq B_{E_m}=B$.  Our assumption $6B\leq N$ then implies that $6|\calS_0| \leq 6B \leq N=\dim_{\FF_\ell} V_\ell$.

The conditions of Proposition~\ref{P:monodromy criterion} have all been verified and thus $G=\theta_\ell(\pi_1(M_{\bbar{k}}))$ contains $\Omega(V_\ell)$ and is not a subgroup of $\SO(V_\ell)$.

\subsection{Proof of Lemma~\ref{L:independence of p}} \label{SS:kodaira independence}
Let $E'$ be the elliptic curve over $k(t)$ given by (\ref{E:initial WE}); denote its Kodaira symbol at $\infty$ by $\kappa'_\infty$.

 Take any closed point $x$ of $\PP^1_k$.  Let $\kappa$ and $\kappa'$ be the Kodaira symbols of $E_m$ and $E'$, respectively, at $x$.   If $t^{-1}$ and $t-m$ are not uniformizers for the local field $K_x$, then we will have $\kappa=\kappa'$.   If $t-m$ is a uniformizer for $K_x$, then $E'$ has good reduction at $x$ (since $\Delta(m)\neq 0$) and hence $\kappa=\I_0^*$.    Finally, if $t^{-1}$ (and hence also $(t-m)^{-1}$) is a uniformer for $K_x$, then the pair $\{\kappa,\kappa'\}$ is one of the following: $\{\I_n,\I_n^*\}$, $\{\II,\IV^*\}$, $\{\IV,\II^*\}$, $\{\III,\III^*\}$; in particular, $\kappa$ is determined by $\kappa'=\kappa_\infty'$. Therefore, $\Kod(E_m)$ and the Kodaira symbol of $E_m$ at $\infty$ is determined by $\Kod(E')$ and $\kappa_\infty'$.  Observe that the quantities $\Kod(E')$ and $\kappa_\infty'$ do not change if we replace $E'$ by its base extension to $k'(t)$ where $k'$ is any finite extension of $k$.   The lemma is now immediate when $R$ is a finite field.   
 
 When $R=\ZZ[S^{-1}]$, it suffices to show that for the elliptic curve $E'$ over $\FF_p(t)$ given by (\ref{E:initial WE}), the multiset $\Kod(E')$ and the Kodaira symbol of $E'$ at $\infty$ is independent of $p$ for all sufficiently large $p\notin S$.   The equation (\ref{E:initial WE}) defines an elliptic surface $\pi\colon X\to \PP^1_\QQ$, where $\PP^1_\QQ = \Spec \QQ[t] \cup \{\infty\}$.  We may assume that $X$ is geometrically smooth and projective, and that $\pi$ is relatively minimal.     The singular fibers of $\pi$ are, geometrically, projectively lines and the pattern in which they intersect determines the Kodaira symbol of the fiber.   Choosing models and increasing $S$, we obtain a morphism $\calX \to \PP^1_{R}$ of $R$-scheme.   For all primes $p\notin S$, after possibly increasing $S$, we find that the number of singular fibers of $\calX_{\FFbar_p} \to \PP^1_{\FFbar_p}$, their Kodaira symbols (counted with multiplicity), and the Kodaira symbol above $\infty$ agree with those of $X_{\Qbar}=\calX_{\Qbar}\to \PP^1_{\Qbar}$.   This gives the desired independence.

\section{A criterion} \label{S:criterion}

In this section, we give a criterion for various  simple groups $\POmega(V)$ to occur as the Galois group of a regular extension of $\QQ(t)$.   The goal is not to give the most general formulation possible, but simply one that covers almost all of our cases.   \\

Consider a Weierstrass equation
\begin{equation} \label{E:weierstrass}
\yy^2 = \xx^3 + a_2(t) \xx^2 + a_4(t) \xx + a_6(t)
\end{equation}
with $a_2, a_4, a_6 \in \ZZ[t]$.  Let $\Delta \in \ZZ[t]$ be the discriminant of (\ref{E:weierstrass}) and assume that it is non-zero.  Assume that the $j$-invariant $J(t)\in \QQ(t)$ of the elliptic curve over $\QQ(t)$ defined by (\ref{E:initial WE}) is non-constant.

For each prime $p\geq 5$, let $M(\FF_p)$ be the set of $m\in \FF_p$ for which $\Delta(m)\neq 0$.  Let $E_m$ be the elliptic curve over $\FF_p(t)$ defined by the Weierstrass equation
\[
(t-m)\cdot \yy^2 = \xx^3 + a_2(t) \xx^2 + a_4(t) \xx + a_6(t).
\]
Let $\PP^1_{\FF_p}$ be the smooth proper curve over $\FF_p$ obtained by adjoining to $\Aff^1_{\FF_p}:=\Spec \FF_p[t]$ a point $\infty$; it has function field $\FF_p(t)$.  

From \S\ref{SS:twist setup}, we find that there are integers $N$, $\chi$, $\calL$ and $\gamma$ such $N=N_{E_m}$, $\chi=\chi_{E_m}$, $\calL=\calL_{E_m}$ and $\gamma=\gamma_{E_m}$ for all sufficiently large primes $p$ and all $m\in M(\FF_p)$.    From \S\ref{SS:twist setup}, there is an integer $B$ that equals $B_{E_m}:=\sum_{x\neq\infty} b_x(E_m) \deg x$ for all sufficiently large primes $p$ and all $m\in M(\FF_p)$, where the sum is over the closed points of $\Aff^1_{\FF_p}$.\\

Fix a non-constant $h \in \QQ(t)$ whose numerator and denominator have degree at most $4$.   There are unique relatively prime $\alpha, \beta \in \ZZ[t]$ such that the leading coefficient of $\beta$ is positive and $h=\alpha/\beta$.   For each prime $p\geq 5$, let $W(\FF_p)$ be the set of $w \in \FF_p$ that satisfy $\beta(w)\neq 0$ and $\Delta(h(w)) \neq 0$.  We have a map $W(\FF_p)\to M(\FF_p),$  $w\mapsto h(w)=\alpha(w)/\beta(w)$.  

Assume that the following hold for all for all sufficiently large primes $p$ and all $w\in W(\FF_p)$:
\begin{itemize}
\item
$E_{h(w)}$ has multiplicative reduction at some closed point of $\Aff^1_{\FF_p}$,
\item
$E_{h(w)}$ has Kodaira symbol $\I_0^*$ at more that one closed point of $\Aff^1_{\FF_p}$.
\end{itemize}
Assume also that $6B \leq N$.\\

Fix a prime $\ell \geq 5$ that does not divide $\calL$.   Assume that {one} of the following three conditions holds:
\begin{enumerate}[(A)]
\item \label{A} 
The integers $N$ and $\chi$ are odd, and $\gamma$ is a square modulo $\ell$.  For all sufficiently large primes $p$ and all $w\in W(\FF_p)$, the integer $2\cdot c_{E_{h(w)}}$ is a square modulo $\ell$.
\item \label{B}  
The integer $N$ is even and $\gamma$ is a non-square modulo $\ell$.  For all sufficiently large primes $p$ and all $w\in W(\FF_p)$, we have $\varepsilon_{E_{h(w)}}=1$.
 \item \label{C}   
The integer $N$ is even, $\chi$ is odd, and $\gamma$ is a square modulo $\ell$.   For all sufficiently large primes $p$ and all $w\in W(\FF_p)$, we have $\varepsilon_{E_{h(w)}}=1$ and the integer $c_{E_{h(w)}}$ is a square modulo $\ell$.   
\end{enumerate}   

\begin{theorem} \label{T:main criterion}
Fix notation and assumptions as above.    Let $V$ be an orthogonal space of dimension $N$ over $\FF_\ell$.   If $N$ is even, suppose further that $\disc(V)= \gamma\cdot  (\FF_\ell^\times)^2$.   Then the group $\Omega(V)$, and hence also $\POmega(V)$, occurs as the Galois group of a regular extension of $\QQ(t)$.
\end{theorem}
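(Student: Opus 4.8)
The plan is to apply the machinery assembled in Sections~\ref{S:twists} and the big monodromy result of Hall (Theorem~\ref{T:main big monodromy}) to the family of quadratic twists coming from~(\ref{E:weierstrass}), and then to pull back along the rational map $h$ to cut the monodromy group down to exactly $\Omega(V)$ (or $\pm\Omega(V)$). First I would set $R=\ZZ[S^{-1}]$ for a suitably large finite set $S$ containing $2,3,\ell$, so that the conclusions of Lemma~\ref{L:independence of p} hold and the invariants $N$, $\chi$, $\calL$, $\gamma$, $B$ are the stated constants; then Proposition~\ref{P:main construction} gives an $N$-dimensional orthogonal space $V_\ell$ over $\FF_\ell$ and a representation $\theta_\ell\colon\pi_1(M)\to\Or(V_\ell)$ satisfying the compatibility (a)--(e). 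The three bulleted hypotheses on $\Phi'$ and the inequality $6B\le N$ are precisely what is needed to invoke Theorem~\ref{T:main big monodromy}, so $\theta_\ell(\pi_1(M_{\Qbar}))\supseteq\Omega(V_\ell)$ and $\theta_\ell(\pi_1(M_{\Qbar}))\not\subseteq\SO(V_\ell)$. (Here $\disc(V_\ell)=\gamma\cdot(\FF_\ell^\times)^2$ by Proposition~\ref{P:main construction}(e), applied at any $m$ with $\det(I\pm\theta_\ell(\Frob_m))\neq0$, which exists by an equidistribution argument; so when $N$ is even $V_\ell\cong V$ as orthogonal spaces.)

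Next I would introduce the curve $W$ with function field $\QQ(u)$ cut out by $\beta(u)\neq0$, $\Delta(h(u))\neq0$, and the étale (away from $S$) map $h\colon W\to M$, giving $\vartheta_\ell\colon\pi_1(W)\xrightarrow{h_*}\pi_1(M)\xrightarrow{\theta_\ell}\Or(V_\ell)$. Since $\Omega(V_\ell)$ is simple non-abelian and normal in $\theta_\ell(\pi_1(M_{\Qbar}))$ while $h$ has abelian (in fact cyclic) Galois closure over $M$, the geometric image $\vartheta_\ell(\pi_1(W_{\Qbar}))$ still contains $\Omega(V_\ell)$. The heart of the argument is to bound the full image $\vartheta_\ell(\pi_1(W))$ from above, and this is where the case analysis (\ref{A})/(\ref{B})/(\ref{C}) enters. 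In case~(\ref{A}), $N$ is odd so $\det\theta_\ell=\pm\varepsilon$ identifies the determinant-$(-1)$ coset; one shows using equidistribution that for any $A\in\vartheta_\ell(\pi_1(W))$ with $\det(I-A)\neq0$ there is a large prime $p$ and $w\in W(\FF_p)$ with $A\sim\vartheta_\ell(\Frob_w)$, whence by Proposition~\ref{P:main construction}(c) and Corollary~\ref{C:BSD main}, $\spin(-A)=2c_{E_{h(w)}}\cdot(\FF_\ell^\times)^2=(\FF_\ell^\times)^2$ by the case~(\ref{A}) hypothesis; combined with $\det(-A)=(-1)^N\det A=1$ this forces $-A\in\Omega(V_\ell)$, so the only coset of $\Omega(V_\ell)$ in $\vartheta_\ell(\pi_1(W))$ of determinant $-1$ is $-\Omega(V_\ell)$, and hence $\vartheta_\ell(\pi_1(W))\subseteq\pm\Omega(V_\ell)$. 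In cases~(\ref{B}) and~(\ref{C}), the condition $\varepsilon_{E_{h(w)}}=1$ together with Proposition~\ref{P:main construction}(b) and $\det(I\pm\theta_\ell(\Frob_m))$ generically nonzero forces $\det\vartheta_\ell\equiv 1$ (i.e.\ $\vartheta_\ell(\pi_1(W))\subseteq\SO(V_\ell)$), because $(-1)^N=1$; then in case~(\ref{B}) one uses that $\gamma$ is a non-square to locate, by equidistribution, an element $A$ with $\det(I+A)\neq0$ and $\spin(A)=2^Nq^{-1+\chi}c_{E_{h(w)}}\gamma\cdot(\FF_\ell^\times)^2$ and $\spin(-A)=2^Nq^{-1+\chi}c_{E_{h(w)}}\cdot(\FF_\ell^\times)^2$, hence $\spin(-I)=\gamma\cdot(\FF_\ell^\times)^2$ is a non-square so $-I\notin\Omega(V_\ell)$ and moreover one checks $\SO(V_\ell)\cap(\text{things with trivial spinor norm})=\Omega(V_\ell)$ captures the image; in case~(\ref{C}) the extra condition that $c_{E_{h(w)}}$ is a square modulo $\ell$ makes $\spin(\vartheta_\ell(\Frob_w))=(\FF_\ell^\times)^2$ for all such $w$, so every Frobenius lands in $\Omega(V_\ell)$ and, by Chebotarev density in $\pi_1(W)$, $\vartheta_\ell(\pi_1(W))\subseteq\Omega(V_\ell)$ outright.

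Finally I would assemble the chain of inclusions. In case~(\ref{A}) we have $\Omega(V_\ell)\subseteq\vartheta_\ell(\pi_1(W_{\Qbar}))\subseteq\vartheta_\ell(\pi_1(W))\subseteq\pm\Omega(V_\ell)$, and the map $\pi_1(W)\xrightarrow{\vartheta_\ell}\pm\Omega(V_\ell)\twoheadrightarrow(\pm\Omega(V_\ell))/\{\pm I\}\cong\Omega(V_\ell)$ (the last isomorphism because $-I\notin\Omega(V_\ell)$ when $N$ is odd) is surjective with geometric image all of $\Omega(V_\ell)$; since the geometric image equals the arithmetic image of the composite restricted to $\pi_1(W_{\Qbar})$, $\QQ$ is algebraically closed in the fixed field, so we obtain the desired regular $\Omega(V_\ell)$-extension of $\QQ(u)=\QQ(t)$. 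In case~(\ref{B}) we get $\Omega(V_\ell)\subseteq\vartheta_\ell(\pi_1(W_{\Qbar}))\subseteq\vartheta_\ell(\pi_1(W))\subseteq\SO(V_\ell)$, and one checks, using $\spin(-I)\neq(\FF_\ell^\times)^2$, that the only subgroup of $\SO(V_\ell)$ containing $\Omega(V_\ell)$ and with the spinor-norm behavior forced by the Frobenii is $\Omega(V_\ell)$ itself, so $\vartheta_\ell(\pi_1(W))=\Omega(V_\ell)$ directly; in case~(\ref{C}) we showed $\vartheta_\ell(\pi_1(W))\subseteq\Omega(V_\ell)$, and with the reverse geometric inclusion this is again an equality. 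In all cases $\vartheta_\ell$ (or its quotient) gives a regular Galois extension of $\QQ(t)$ with group $\Omega(V)$, and dividing by the center yields $\POmega(V)$. I expect the main obstacle to be the upper-bound step: making the equidistribution/Chebotarev argument precise enough to guarantee that \emph{every} element of the arithmetic image is conjugate to some $\vartheta_\ell(\Frob_w)$ with $w$ a rational point over a large finite field (so that the special-value formula of Corollary~\ref{C:BSD main} can be applied), and then carefully pinning down which overgroup of $\Omega(V_\ell)$ inside $\pm\Omega(V_\ell)$ or $\SO(V_\ell)$ actually occurs — this requires the Zassenhaus spinor-norm computations of Proposition~\ref{P:theta comp}/\ref{P:main construction} to be combined with the root-number and Tamagawa-number arithmetic of the specific family, which is exactly the content hidden in the hypotheses of~(\ref{A}), (\ref{B}), (\ref{C}).
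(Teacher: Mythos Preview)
Your overall strategy matches the paper's proof closely: set up $\theta_\ell$ via Proposition~\ref{P:main construction}, apply Hall's big monodromy theorem, pull back along $h$ to define $\vartheta_\ell$, and then pin down the arithmetic image coset by coset using equidistribution and the spinor-norm formulas. The final assembly via $\pm\Omega(V_\ell)\twoheadrightarrow\Omega(V_\ell)$ is also the same. However, there are two genuine gaps in your upper-bound step.

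\textbf{Case~(\ref{A}).} You only treat cosets with $\det=-1$: you pick $A$ with $\det(I-A)\neq0$, compute $\spin(-A)$ via Proposition~\ref{P:main construction}(\ref{P:main construction c}), and conclude $-A\in\Omega(V_\ell)$. From ``the only $\Omega(V_\ell)$-coset in $G$ of determinant $-1$ is $-\Omega(V_\ell)$'' you jump to $G\subseteq\pm\Omega(V_\ell)$, but this does not follow: $G=\SO(V_\ell)$ satisfies your hypothesis vacuously (it has no $\det=-1$ cosets at all) yet $\SO(V_\ell)\not\subseteq\pm\Omega(V_\ell)$ when $N$ is odd. The paper fixes this by treating \emph{every} coset $\kappa$: writing $\det(\kappa)=\{(-1)^e\}$, one chooses $A\in\kappa$ with $\det(I+(-1)^eA)\neq0$ and applies part~(\ref{P:main construction c}) when $e=1$ and part~(\ref{P:main construction d}) when $e=0$. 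Since $\gamma$ is a square in case~(\ref{A}), both computations give $\spin((-1)^eA)=2q^{-1+\chi}c_{E_{h(w)}}\cdot(\FF_\ell^\times)^2=(\FF_\ell^\times)^2$, whence $(-1)^eA\in\Omega(V_\ell)$ and $\kappa\in\{\Omega(V_\ell),-\Omega(V_\ell)\}$. You must handle the $e=0$ case as well.

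\textbf{Case~(\ref{B}).} Your argument here is garbled and reaches the wrong conclusion. Assumption~(\ref{B}) imposes no condition on $c_{E_{h(w)}}$, so there is no ``spinor-norm behavior forced by the Frobenii'' that pins $G$ down to $\Omega(V_\ell)$. The paper's argument is much simpler: from $\varepsilon_{E_{h(w)}}=1$ one gets $G\subseteq\SO(V_\ell)$; since $\spin(-I)=\disc(V_\ell)=\gamma\cdot(\FF_\ell^\times)^2$ is nontrivial and $N$ is even, $-I\in\SO(V_\ell)\setminus\Omega(V_\ell)$, so $\SO(V_\ell)=\pm\Omega(V_\ell)$ and hence $G\subseteq\pm\Omega(V_\ell)$. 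One does not (and cannot) conclude $G=\Omega(V_\ell)$ here.

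Two smaller points. First, the map $h\colon W\to M$ need not have abelian or cyclic Galois closure; the hypothesis only bounds its degree by $4$. The paper uses that $\mathfrak{S}_4$ is \emph{solvable}, so there is a normal open $H\trianglelefteq\pi_1(M_{\Qbar})$ with $H\subseteq h_*(\pi_1(W_{\Qbar}))$ and $\pi_1(M_{\Qbar})/H$ solvable; then the perfect group $\Omega(V_\ell)$, being normal in $\theta_\ell(\pi_1(M_{\Qbar}))$, survives in $\theta_\ell(H)\subseteq G^g$. Second, $W\to M$ is not automatically \'etale; the paper first replaces $a_i(t)$ by $a_i(t)f(t)^i$ for a suitable separable $f$ (which does not disturb any of the hypotheses) to arrange this before invoking $h_*$ on fundamental groups.
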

\begin{proof}
For a finite set of primes $S$, define the ring $R=\ZZ[S^{-1}]$.  We will allow ourselves to increase the finite set $S$ to ensure various conditions hold; for example, we will assume that $S$ contains $2$, $3$, $\ell$ and the primes $p$ for which $\Delta(t)\equiv 0 \pmod{p}$.   We may also assume that all the conditions that are assumed to hold for sufficiently large primes $p$ actually hold for all $p\notin S$.

Define the $R$-scheme $M=\Spec R[u,\Delta(u)^{-1}]$.  For each prime $p\notin S$, $M(\FF_p)$ is indeed the set of $m\in \FF_p$ for which $\Delta(m)\neq 0$.      After possibly increasing $S$, there is an orthogonal space $V_\ell$ over $\FF_\ell$ of dimension $N$ and a representation
\[
\theta_\ell\colon \pi_1(M)\to \Or(V_\ell)
\]
satisfying the conclusions of Proposition~\ref{P:main construction}. 

Let $\Phi'$ be the multiset of Kodaira symbols as in \S\ref{SS:big monodromy Hall}.   Our assumption that $E_m$ has multiplicative reduction at some closed point $\Aff^1_{\FF_p}$ implies that $\Phi'$ contains a symbol $\I_n$ for some $n\geq 1$.   Our assumption that $E_m$ has at least Kodaira symbol $\I_0^*$ at more than one closed point $\Aff^1_{\FF_p}$ implies that $\Phi'$ contains the symbol $\I_0^*$ at least twice.   We have $6B\leq N$ by assumption.   Theorem~\ref{T:main big monodromy} now applies and thus $\theta_\ell(\pi_1(M_{\Qbar}))\supseteq \Omega(V_\ell)$.
\\

Define the $R$-scheme $W=\Spec R[v, \beta(v)^{-1}, \Delta(h(v))^{-1}]$.  For each prime $p\notin S$, $W(\FF_p)$ is indeed the set of $w\in \FF_p$ for which $\beta(w)\neq0$ and $\Delta(h(w))\neq 0$.  Define the morphism
\[
\varphi\colon W \to M,\, w \mapsto h(w).
\]
We can replace $a_i(t)$ by $a_i(t) f(t)^i$ for a fixed non-zero separable polynomial $f(t)\in \ZZ[t]$ that is relatively prime to $\Delta(t)$; the new discriminant equals $f(t)^6\Delta(t)$ and all the assumptions of the theorem still hold.    We may choose $f$ so that the morphism $W_\QQ \to M_\QQ$ is finite \'etale.   After possible increasing the set $S$, we may thus assume that $\varphi$ is also finite \'etale.  

The morphism $\varphi$ thus gives rise to an injective homomorphism $\varphi_* \colon \pi_1(W) \hookrightarrow \pi_1(M)$; uniquely determined up to conjugacy.  Let 
\[
\vartheta_\ell \colon \pi_1(W)\to \Or(V_\ell)
\]
be the representation obtained by composing $\varphi_*$ and $\theta_\ell$.    For each prime $p\notin S$ and $w\in W(\FF_p)$, we have an equality 
\[
\vartheta_\ell(\Frob_w)= \theta_\ell(\Frob_{h(w)})
\]
of conjugacy classes in $\Or(V_\ell)$.  Define the groups
\[
G:=\vartheta_\ell(\pi_1(W)) \quad\text{ and }\quad G^g:=\vartheta_\ell(\pi_1(W_\Qbar)).
\]  

We claim that $G^g \supseteq \Omega(V_\ell)$.  The \'etale morphism $\varphi$ has degree at most $4$ by our assumption on the degree of the numerator and denominator of $h(t)$.  Since $\mathfrak{S}_4$ is solvable, there is a normal open subgroup $H$ of $\pi_1(M_{\Qbar})$ such that $H\subseteq \varphi_*(\pi_1(W_{\Qbar}))$ and such that $\pi_1(M_{\Qbar})/H$ is solvable.    The group $\Omega(V_\ell)$ is perfect and non-abelian and we have seen that it is a normal subgroup of $\theta_\ell(\pi_1(M_{\Qbar}))$.  Therefore, $\theta_\ell(H)$ contains $\Omega(V_\ell)$ since the quotient $\pi_1(M_{\Qbar})/H$ is solvable.   This proves the claim since $G^g\supseteq \theta_\ell(H)$.

We will now show that $G$ is a subgroup of $\pm \Omega(V_\ell)$.\\

Let us first assume that $N$ is odd and hence assumption~(\ref{A}) holds.    Let $\kappa$ be any coset of $\Omega(V_\ell)$ in $G$.  Take $e\in \{0,1\}$ such that $\det(\kappa)=\{ (-1)^e \}$.  There exists an element $A\in \kappa$ such that $\det(I+(-1)^e A)\neq 0$.  We have $\det((-I)^e A) = (-1)^e (-1)^e =1$ since $N$ is odd.  Using equidistribution, there is a prime $p\notin S$ and an element $w\in W(\FF_p)$ such that the conjugacy class $\vartheta_\ell(\Frob_w)=\theta_\ell(\Frob_{h(w)})$ of $\Or(V_\ell)$ contains $A$.  

By assumption~(\ref{A}), $\gamma$ is a square modulo $\ell$ (it is non-zero modulo $\ell$ since $\ell\nmid 6\calL$).  Therefore, $\spin((-1)^e A) = \spin((-1)^e \theta_\ell(\Frob_{h(w)}))$ equals $2^N q^{-1+\chi} c_{E_{h(w)}}  (\FF_\ell^\times)^2=2 q^{-1+\chi} c_{E_{h(w)}}  (\FF_\ell^\times)^2$ by parts (\ref{P:main construction c}) and (\ref{P:main construction d}) of Proposition~\ref{P:main construction}.    By assumption~(\ref{A}), $\chi$ is odd and $2\cdot c_{E_{h(w)}}$ is a square modulo $\ell$.   Therefore, $\spin((-1)^e A) = (\FF_\ell^\times)^2$.

Since $(-1)^e A$ has trivial determinant and spinor norm, it belongs to $\Omega(V_\ell)$.   The coset $\kappa=A\Omega(V_\ell)$ is thus either $\Omega(V_\ell)$ or $-\Omega(V_\ell)$.   Therefore, $G \subseteq \pm \Omega(V_\ell)$ since $\kappa$ was an arbitrary coset of $\Omega(V_\ell)$ in $G$.
\\

Now suppose that $N$ is even and hence assumption~(\ref{B}) or (\ref{C}) holds.   Since $G\supseteq \Omega(V_\ell)$ and $N$ is even, there is an element $A\in G$ such that $\det(I\pm A) \neq 0$.   Using equidistribution, there is a prime $p\notin S$ and an element $w\in W(\FF_p)$ such that the conjugacy class $\vartheta_\ell(\Frob_w)=\theta_\ell(\Frob_{h(w)})$ of $\Or(V_\ell)$ contains $A$.    By part (\ref{P:main construction e}) of Proposition~\ref{P:main construction}, we have $\disc(V_\ell) = \gamma (\FF_\ell^\times)^2$.

  By Proposition~\ref{P:main construction}(\ref{P:main construction b}), we find that 
\[
\det(\vartheta_\ell(\Frob_w)) = \det( \theta_\ell(\Frob_{h(w)}) ) = (-1)^N  \varepsilon_{E_{h(w)}} = \varepsilon_{E_{h(w)}}
\]
for all sufficiently large primes $p \notin S$ and all $w\in W(\FF_p)$.   Assumption~(\ref{B}) or (\ref{C}) then implies that $\det(\vartheta_\ell(\Frob_w))=1$ for all large $p$ and all $w\in W(\FF_p)$.  Using equidistribution, we deduce that $G$ is a subgroup of $\SO(V_\ell)$.

Suppose that $\gamma$ is not a square modulo $\ell$.   Then $-I\notin \Omega(V_\ell)$ since $\spin(-I)=\disc(V_\ell) = \gamma(\FF_\ell^\times)^2$.   Therefore, $G$ is a subgroup of $\SO(V_\ell)$ and $\SO(V_\ell)=\pm \Omega(V_\ell)$.

Now assume that $\gamma$ is a square modulo $\ell$ and hence that assumption~(\ref{C}) holds.   Let $\kappa$ be any coset of $\Omega(V_\ell)$ in $G$.  Since $G\subseteq \SO(V_\ell)$, there is an element $A\in \kappa$ such that $\det(I + A)\neq 0$.  Using equidistribution, there is a prime $p\notin S$ and an element $w\in W(\FF_p)$ such that the conjugacy class $\vartheta_\ell(\Frob_w)=\theta_\ell(\Frob_{h(w)})$ of $\Or(V_\ell)$ contains $A$.    By part (\ref{P:main construction d}) of Proposition~\ref{P:main construction}, $\spin(A)$ equals $2^N q^{-1+\chi} c_{E_{h(w)}}  (\FF_\ell^\times)^2=q^{-1+\chi} c_{E_{h(w)}} (\FF_\ell^\times)^2$.   By assumption~(\ref{C}), $\chi$ is odd and $ c_{E_{h(w)}}$ is a square modulo $\ell$.   Therefore, $\spin(A) = (\FF_\ell^\times)^2$.   So $\kappa=A\Omega(V_\ell)$ is $\Omega(V_\ell)$ and hence $G=\Omega(V_\ell)$ since $\kappa$ was an arbitrary coset.\\

We have proved the inclusions $\Omega(V_\ell) \subseteq G^g \subseteq G \subseteq \pm \Omega(V_\ell)$.   Let $Z$ be the group $\{I\}$ if $-I\in \Omega(V_\ell)$ and the group $\{\pm I\}$ if $-I \notin \Omega(V_\ell)$.    The natural map $\Omega(V_\ell)  \to (\pm \Omega(V_\ell))/Z$ is thus an isomorphism.    Since $G\subseteq \pm \Omega(V_\ell)$, we can define the homomorphism
\[
\beta\colon \pi_1(W) \xrightarrow{\vartheta_\ell} \pm \Omega(V_\ell) \twoheadrightarrow (\pm \Omega(V_\ell))/Z \cong \Omega(V_\ell).
\]
We have $\beta(\pi_1(W))=\beta(\pi_1(W_{\Qbar}))=\Omega(V_\ell)$ since $G^g\supseteq \Omega(V_\ell)$.    Therefore, $\beta$ gives rise to a regular extension of $\QQ(v)$, the function field of $W$, that is Galois with Galois group isomorphic to $\Omega(V_\ell)$.     This completes the proof of the theorem; when $N$ is even, we have already shown that $\disc(V_\ell) =\gamma (\FF_\ell^\times)^2$.
\end{proof}

\section{\texorpdfstring{Proof of Theorem~\ref{T:Omega total}($i$)  for $n>5$}{Proof of Theorem~\ref{T:Omega total}(i)  for n>5}} \label{SS:numerics odd}

Take any prime $\ell\geq 5$.  In this section, we use the setup of \S\ref{S:criterion} to show that $\Omega_N(\ell)$ occurs as the Galois group of a regular extension of $\QQ(t)$ for every odd integer $N>5$.  The case $N=5$ requires extra attention and will be discussed in \S\ref{S:N=5}.   The proof is broken up into four cases depending on the value of $N$ modulo $8$.

\subsection{\texorpdfstring{$N\equiv 1 \pmod{8}$}{N congruent to 1 mod 8}}
Take any integer $n\geq 1$.  Define the rational function $h(u)=u$ and the polynomial $f(t)=\prod_{i=1}^{4n}(t-(i+1))$.  Consider the Weierstrass equation
\begin{equation} \label{E:1}
\yy^2= \xx\cdot (\xx-f(t))\cdot (\xx-tf(t)) = \xx^3-(t+1)f(t) \xx^2 + t f(t)^2 \xx;
\end{equation}
it has discriminant $\Delta(t)=16 f(t)^6 t^2 (t-1)^2$ and the $j$-invariant of the corresponding elliptic curve over $\QQ(t)$ is $2^8(t^2-t+1)^3 t^{-2} (t-1)^{-2}$.   
  
Now take notation as in \S\ref{S:criterion}.  Take any prime $p\nmid 6\ell$ such that $f(t)$ modulo $p$ is separable and $f(0)f(1)\not\equiv 0 \pmod{p}$.   Take any $w\in W(\FF_p)$, i.e., any $w\in \FF_p$ for which $\Delta(w)\neq 0$.   Let $x$ be any closed point of $\PP^1_{\FF_p}=\Spec \FF_p[t] \cup \{\infty\}$ for which $E_{h(w)}/\FF_p(t)$ has bad reduction and let $\kappa_x$ be the Kodaira symbol of $E_{h(w)}/\FF_p(t)$ at $x$.

\begin{itemize}
\item
Suppose $x=0$.  We have $\kappa_x=\I_2$, so $c_x(E_{h(w)})=2$.
\item
Suppose $x=1$.  We have $\kappa_x=\I_2$, so $c_x(E_{h(w)})=2$.
\item
Suppose $x=\infty$.   We have $\kappa_x=\I_2$, so $c_x(E_{h(w)})=2$.
\item
Suppose $x=a$ is a root of $(t-h(w))f(t) \bmod{p} \in \FF_p[t]$.  We have $\kappa_x=\I_0^*$.    Using that the degree $3$ polynomial of $\xx$ in the Weierstrass equation (\ref{E:1}) factors into linear terms, Tate's algorithm shows that $c_x(E_{h(w)})=4$.  
\end{itemize}
Note that the curve $E_{h(w)}$ has multiplicative reduction at a closed point $x\neq\infty$ and Kodaira symbol $\I_0^*$ at more than one closed point $x\neq \infty$.   

From the computations above, we find that $N_{E_{h(w)}}=-4+ 3\cdot 1+ (4n+1)\cdot 2  =8n+1$, $\chi_{E_{h(w)}} = (2+2+2+6(4n+1))/12= 2n+1$, $\calL_{E_{h(w)}}=1$, $\gamma_{E_{h(w)}}=1$ and $B_{E_{h(w)}}=0$.  Since this hold for all large primes $p$ and all $w\in W(\FF_p)$, we thus have $N=8n+1$, $\chi=2n+1$, $\calL=1$, $\gamma=1$ and $B=0$.    Observe that $N$ is odd, $\chi$ is odd and $\gamma$ is a square modulo $\ell$.   We have $6B=0\leq 8n+1=N$.   

We have verified all the conditions of \S\ref{S:criterion} and in particular that assumption (\ref{A}) holds.  From Theorem~\ref{T:main criterion}, we deduce that $\Omega(V)$ occurs as the Galois group of a regular extension of $\QQ(t)$ where $V$ is an orthogonal space over $\FF_\ell$ of dimension $N=8n+1$ with $n\geq 1$.

\subsection{\texorpdfstring{$N\equiv 3 \pmod{8}$}{N congruent to 3 mod 8}}
Take any integer $n\geq 1$.    Define the rational function $h(u)=3u^2/(3u^2+1)$ and the polynomial $f(t)=\prod_{i=1}^{4n}(t-h(i))$.   Consider the Weierstrass equation
\[
\yy^2 = \xx^3-3tf(t)^2 \xx + 2t^2 f(t)^3;
\]
it has discriminant $\Delta(t)=-2^6 3^3 f(t)^6 t^3(t-1)$ and the $j$-invariant of the corresponding elliptic curve over $\QQ(t)$ is $-1728 (t-1)^{-1}$.   

Now take notation as in \S\ref{S:criterion}.   Take any prime $p\nmid 6\ell$ such that $f(t)$ modulo $p$ is separable and $f(0)f(1)\not\equiv 0 \pmod{p}$.   Take any $w\in W(\FF_p)$, i.e., any $w\in \FF_p$ for which $3w^2+1\neq 0$ and $\Delta(w)\neq 0$.   Let $x$ be any closed point of $\PP^1_{\FF_p}=\Spec \FF_p[t] \cup \{\infty\}$ for which $E_{h(w)}/\FF_p(t)$ has bad reduction and let $\kappa_x$ be the Kodaira symbol of $E_{h(w)}/\FF_p(t)$ at $x$.

\begin{itemize}
\item
Suppose $x=\infty$.   We have $\kappa_x=\II$, so $c_x(E_{h(w)})=1$.
\item
Suppose $x=0$.   We have $\kappa_x=\III$, so $c_x(E_{h(w)})=2$.
\item
Suppose $x=1$.   We have $\kappa_x=\I_1$, so $c_x(E_{h(w)})=1$.
\item
Suppose $x=a$ is a root of $(t-h(w))f(t) \bmod{p} \in \FF_p[t]$.  We have $\kappa_x=\I_0^*$.     Tate's algorithm shows that $c_x(E_{h(w)})=1+m$ where $m$ is the number of roots of 
\[
P(\xx):=\xx^3-3a \xx + 2a^2  
\] 
in $\FF_p$.  Using that $a=h(b)$ for some $b\in \FF_p$, we find that the discriminant of $P(\xx)$ is a non-zero square (moreover, it equals $54b^3/(3b^2 + 1)^2$ squared), so $m$ equals $0$ or $3$.  Therefore, $c_x(E_{h(w)})$ equals $1$ or $4$.
\end{itemize}
Note that the curve $E_{h(w)}$ has multiplicative reduction at a closed point $x\neq\infty$ and Kodaira symbol $\I_0^*$ at more than one closed point $x\neq \infty$.   

From the computations above, we find that $N_{E_{h(w)}}=-4+ 2+2+1+ (4n+1)\cdot 2  =8n+3$, $\chi_{E_{h(w)}} = (2+3+1+6(4n+1))/12= 2n+1$, $\calL_{E_{h(w)}}=1$, $\gamma_{E_{h(w)}}=1$ and $B_{E_{h(w)}}=1$.  Since this hold for all large primes $p$ and all $w\in W(\FF_p)$, we thus have $N=8n+3$, $\chi=2n+1$, $\calL=1$, $\gamma=1$ and $B=1$.    Observe that $N$ is odd, $\chi$ is odd and $\gamma$ is a square modulo $\ell$.   We have $6B=6\leq 8n+1=N$.   

We have verified all the conditions of \S\ref{S:criterion} and in particular that assumption (\ref{A}) holds.  From Theorem~\ref{T:main criterion}, we deduce that $\Omega(V)$ occurs as the Galois group of a regular extension of $\QQ(t)$ where $V$ is an orthogonal space over $\FF_\ell$ of dimension $N=8n+3$  with $n\geq 1$.

\subsection{\texorpdfstring{$N\equiv 5 \pmod{8}$ and $N>5$}{N>5 congruent to 5 mod 8}}
\label{SS:5 mod 8}
Take any integer $n\geq 1$.    Define the rational function $h(u)=(-u^2 + 3)/(u^2 + 3)$ and the polynomial $f(t)=\prod_{i=1}^{4n}(t-h(i))$.   Consider the Weierstrass equation
\[
\yy^2 = \xx^3+ 3(t^2-1)^3 f(t)^2 \xx -2(t^2-1)^5 f(t)^3;
\]
it has discriminant $\Delta(t)=-1728 f(t)^6 t^2 (t-1)^9(t+1)^9$ and the $j$-invariant of the corresponding elliptic curve over $\QQ(t)$ is $1728 t^{-2}$.  

Now take notation as in \S\ref{S:criterion}.   Take any prime $p\nmid 6\ell$ such that $f(t)$ modulo $p$ is separable and $f(0)f(1)f(-1)\not\equiv 0 \pmod{p}$.    Take any $w\in W(\FF_p)$, i.e., any $w\in \FF_p$ for which $w^2+3\neq 0$ and $\Delta(h(w))\neq 0$.   Let $x$ be any closed point of $\PP^1_{\FF_p}=\Spec \FF_p[t] \cup \{\infty\}$ for which $E_{h(w)}/\FF_p(t)$ has bad reduction and let $\kappa_x$ be the Kodaira symbol of $E_{h(w)}/\FF_p(t)$ at $x$.

\begin{itemize}
\item
Suppose $x=\infty$.  We have $\kappa_x=\II^*$, so $c_x(E_{h(w)})=1$.
\item
Suppose $x=0$.   We have $\kappa_x=\I_2$, so $c_x(E_{h(w)})=2$.
\item
Suppose $x=1$.   We have $\kappa_x=\III^*$, so $c_x(E_{h(w)})=2$.
\item
Suppose $x=-1$.   We have $\kappa_x=\III^*$, so $c_x(E_{h(w)})=2$.
\item
Suppose $x=a$ is a root of $(t-h(w))f(t) \bmod{p} \in \FF_p[t]$.   We have $\kappa_x=\I_0^*$.   Tate's algorithm shows that $c_x(E_{h(w)})=1+m$ where $m$ is the number of roots of 
\[
P(\xx):=\xx^3+ 3(a^2-1)^3 \xx -2(a^2-1)^5 
\] 
in $\FF_p$.  Using that $a=h(b)$ for some $b\in \FF_p$, we find that the discriminant of $P(\xx)$ is a non-zero square (moreover, it equals $2^{10} 3^6 b^9(b^2-3)/(b^2 + 3)^{10}$ squared), so $m$ equals $0$ or $3$.   Therefore, $c_x(E_{h(w)})$ equals $1$ or $4$.
\end{itemize}
Note that the curve $E_{h(w)}$ has multiplicative reduction at a closed point $x\neq\infty$ and Kodaira symbol $\I_0^*$ at more than one closed point $x\neq \infty$.   

From the computations above, we find that $N_{E_{h(w)}}=-4+ 2+1+2+2+ (4n+1)\cdot 2  =8n+5$, $\chi_{E_{h(w)}} = (10+2+9+9+6(4n+1))/12= 2n+3$, $\calL_{E_{h(w)}}=1$, $\gamma_{E_{h(w)}}=1$ and $B_{E_{h(w)}}=2$.  Since this hold for all large primes $p$ and all $w\in W(\FF_p)$, we thus have $N=8n+5$, $\chi=2n+3$, $\calL=1$, $\gamma=1$ and $B=2$.    Observe that $N$ is odd, $\chi$ is odd and $\gamma$ is a square modulo $\ell$.   We have $6B=12\leq 13\leq N$.   

We have verified all the conditions of \S\ref{S:criterion} and in particular that assumption (\ref{A}) holds.  From Theorem~\ref{T:main criterion}, we deduce that $\Omega(V)$ occurs as the Galois group of a regular extension of $\QQ(t)$ where $V$ is an orthogonal space over $\FF_\ell$ of dimension $N=8n+5$ with $n\geq 1$.

\begin{remark} \label{R:foreshadow computations}
For later, note that the above computations hold when $n=0$ except for two things:   The first is that each $E_{h(w)}$ has Kodaira symbol $\I^*_0$ at only one closed point $x\neq \infty$ of $\PP^1_{\FF_p}$.   The second is that $6B=12$ is now greater than $N=5$.
\end{remark}

\subsection{\texorpdfstring{$N\equiv 7 \pmod{8}$}{N congruent to 7 mod 8}}

Take any integer $n\geq 0$.  Define the rational function $h(u)=u$ and the polynomial $f(t)=\prod_{i=1}^{4n+2}(t-(i+1))$.  Consider the Weierstrass equation
\begin{equation} \label{E:7}
\yy^2= \xx\cdot (\xx+tf(t))\cdot (\xx+t^2f(t)) = \xx^3+t(t+1)f(t)\xx^2 + t^3 f(t)^2 \xx;
\end{equation}
it has discriminant $\Delta(t)=16 f(t)^6 t^8 (t-1)^2$ and the $j$-invariant of the corresponding elliptic curve over $\QQ(t)$ is $2^8(t^2-t+1)^3 t^{-2} (t-1)^{-2}$.   

Now take notation as in \S\ref{S:criterion}.  Take any prime $p\nmid 6\ell$ such that $f(t)$ modulo $p$ is separable and $f(0)f(1)\not\equiv 0 \pmod{p}$.  Take any $w\in W(\FF_p)$, i.e., any $w\in \FF_p$ for which $\Delta(h(w))\neq 0$.   Let $x$ be any closed point of $\PP^1_{\FF_p}=\Spec \FF_p[t] \cup \{\infty\}$ for which $E_{h(w)}/\FF_p(t)$ has bad reduction and let $\kappa_x$ be the Kodaira symbol of $E_{h(w)}/\FF_p(t)$ at $x$.

\begin{itemize}
\item
Suppose $x=\infty$.   Over $\FF_p(t)_x=\FF_p(\!(t^{-1})\!)$, the elliptic curve $E_{h(w)}$ is isomorphic to the curve defined by the Weierstrass equation
\[
\yy^2= \xx(\xx+t^{-2})(\xx+t^{-1}) =\xx^3+(1+t^{-1})\cdot t^{-1} \xx^2 + t^{-3}\xx.
\]
We have $\kappa_x=\I_2^*$. Using Tate's algorithm, we find that $c_x(E_{h(w)})=4$ (since the quadratic equation arising has vanishing constant term).
\item
Suppose $x=0$.  Over $\FF_p(t)_x$, the elliptic curve $E_{h(w)}$ is isomorphic to the curve defined by the Weierstrass equation
\[
\yy^2= \xx^3-h(w)f(0) (t+1)\cdot t\, \xx^2 + h(w)^2f(0)^2\cdot  t^3 \xx.
\]
We have $\kappa_x=\I_2^*$.  Using Tate's algorithm, we find that $c_x(E_{h(w)})=4$ (since the quadratic equation arising has vanishing constant term).
\item
Suppose $x=1$.  We have $\kappa_x=\I_2$, so $c_x(E_{h(w)})=2$.
\item
Suppose $x=a$ is a root of $(t-h(w))f(t) \bmod{p} \in \FF_p[t]$.  We have $\kappa_x=\I_0^*$.   We have $c_x(E_{h(w)})=4$ since the polynomial of $\xx$ in (\ref{E:7}) splits into linear factors.
\end{itemize}
Note that the curve $E_{h(w)}$ has multiplicative reduction at a closed point $x\neq\infty$ and Kodaira symbol $\I_0^*$ at more than one closed point $x\neq \infty$.   

From the computations above, we find that $N_{E_{h(w)}}=-4+ 2+2+1+ (4n+3)\cdot 2  =8n+7$, $\chi_{E_{h(w)}} = (8+8+2+6(4n+3))/12= 2n+3$, $\calL_{E_{h(w)}}=1$, $\gamma_{E_{h(w)}}=1$ and $B_{E_{h(w)}}=1$.  Since this hold for all large primes $p$ and all $w\in W(\FF_p)$, we thus have $N=8n+7$, $\chi=2n+3$, $\calL=1$, $\gamma=1$ and $B=1$.    Observe that $N$ is odd, $\chi$ is odd and $\gamma$ is a square modulo $\ell$.   We have $6B=6\leq 7\leq N$.   

We have verified all the conditions of \S\ref{S:criterion} and in particular that assumption (\ref{A}) holds.  From Theorem~\ref{T:main criterion}, we deduce that $\Omega(V)$ occurs as the Galois group of a regular extension of $\QQ(t)$ where $V$ is an orthogonal space over $\FF_\ell$ of dimension $N=8n+7$ with $n\geq 0$.

\section{\texorpdfstring{Proof of Theorem~\ref{T:Omega total}($ii$) and ($iii$)}{Proof of Theorem~\ref{T:Omega total}(ii) and (iii)}} \label{SS:numerics even 1}

Take any prime $\ell\geq 5$ and even integer $N\geq 6$.   Let $V$ be an orthogonal space of dimension $N$ over $\FF_\ell$ satisfying $\det(V)=(\FF_\ell^\times)^2$.  We use the criterion of \S\ref{S:criterion} to show that $\Omega(V)$, and hence $\POmega(V)$, occurs as the Galois group of a regular extension of $\QQ(t)$.  Note that $\POmega(V)$ is isomorphic to $\POmega^+_N(\ell)$ if $N\equiv 0 \pmod{4}$ or $\ell\equiv 1 \pmod{4}$ and isomorphic to $\POmega^-_N(\ell)$ if $N\equiv 2\pmod{4}$ and $\ell\equiv 3\pmod{4}$.

The proof is broken up into four cases depending on the value of $N$ modulo $8$.   

\subsection{\texorpdfstring{$N\equiv 0 \pmod{8}$}{N congruent to 0 mod 8}}
Take any integer $n\geq 1$.  Define the rational function $h(u)= 4 u^2/(u^2+1)^2$  and the polynomial  $f(t)=\prod_{i=1}^{4n-1}(t-h(i+1))$.  Consider the Weierstrass equation
\begin{equation} \label{E:00}
\yy^2 = \xx^3- 3 (t-1)^3 (t-4) f(t)^2 \xx -2(t-1)^5(t+8) f(t)^3;
\end{equation}
it has discriminant $\Delta(t)=-2^6 3^6 f(t)^6 t^2 (t-1)^9$ and the $j$-invariant of the corresponding elliptic curve over $\QQ(t)$ is $-64(t-4)^3 t^{-2}$.  

Now take notation as in \S\ref{S:criterion}.  Take any prime $p\nmid 6\ell$ such that $f(t)$ modulo $p$ is well-defined and separable, and $f(0)f(1)\not\equiv 0 \pmod{p}$.    Take any $w\in W(\FF_p)$, i.e., any $w\in \FF_p$ for which $w^2+1\neq0$ and $\Delta(h(w))\neq0$.  Let $x$ be any closed point of $\PP^1_{\FF_p}= \Spec \FF_p[t] \cup \{\infty\}$ for which $E_{h(w)}/\FF_p(t)$ has bad reduction and let $\kappa_x$ be the Kodaira symbol of $E_{h(w)}/\FF_p(t)$ at $x$.

\begin{itemize}
\item
Suppose $x=\infty$.  We have $\kappa_x=\I_1$ and hence $c_x(E_{h(w)})=1$.   Over $\FF_p(t)_x=\FF_p(\!(t^{-1})\!)$, the elliptic curve $E_{h(w)}$ is isomorphic to the curve defined by (\ref{E:00}) and hence also by $\yy^2=\xx^3-3(1-t^{-1})(1-4t^{-1}) \xx-2(1-t^{-1})^2(1+8t^{-1})$.  Reducing to $\FF_x$, we have the equation
\[
\yy^2= \xx^3-3\xx-2 = -3(\xx+1)^2+(\xx+1)^3.
\]
Therefore, the curve $E_{h(w)}$ has split multiplicative reduction at $x$ if and only if $-3$ is a square in $\FF_p$.
\item
Suppose $x=0$. We have $\kappa_x=\I_2$ and hence $c_x(E_{h(w)})=2$.   Reducing the Weierstrass equation for $E_{h(w)}$ over $\FF_x$, we have 
\[
-h(w) \yy^2= \xx^3-12f(0)^2 \xx+16f(0)^3 = 6f(0)(\xx-2f(0))^2+(\xx-2f(0))^3.
\]
So $E_{h(w)}$ has split multiplicative reduction at $x$ if and only if $-6h(w)f(0)$ is a square in $\FF_p$.  Since $h(u)=(2u/(u^2+1))^2$, we find that $-h(w)f(0)=h(w)\prod_{i=1}^{4n-1} h(i+1)$ is a non-zero square in $\FF_p$.  So $E_{h(w)}$ has split multiplicative reduction at $x$ if and only if $6$ is a square in $\FF_p$.
\item
Suppose $x=1$.  We have $\kappa_x=\III^*$ and hence $c_x(E_{h(w)})=2$.
\item
Suppose $x=a$ is a root of $(t-h(w))f(t) \bmod{p} \in \FF_p[t]$.  We have $\kappa_x=\I_0^*$.   Tate's algorithm shows that $c_x(E_{h(w)})=1+m$ where $m$ is the number of roots of 
\begin{align*}
P(\xx)&:= \xx^3- 3 (a-1)^3 (a-4) \xx -2(a-1)^5(a+8)
\end{align*} 
in $\FF_p$.  The polynomial $P(\xx)$ has root $2(a-1)^2$, and $P(\xx)/(\xx-2(a-1)^2)$ equals
\[
Q(\xx):=\xx^2+2(a-1)^2 \xx + (a-1)^3(a+8).
\]
Using that $a=h(b)$ for some $b\in \FF_p$, we find that the discriminant of $Q(\xx)$ is a square (moreover, it equals $6^2(b-1)^6(b+1)^6/(b^2+1)^6$), so $m=3$.   Therefore, $c_x(E_{h(w)})=4$.
\end{itemize}
Note that the curve $E_{h(w)}$ has multiplicative reduction at a closed point $x\neq\infty$ and Kodaira symbol $\I_0^*$ at more than one closed point $x\neq \infty$.   

From the computations above, we find that $N_{E_{h(w)}}=-4+1+2+1+(4n)\cdot 2 =8n$, $\chi_{E_{h(w)}} = (2+9+1+6(4n))/12= 2n+1$, $\calL_{E_{h(w)}}=1$, $\gamma_{E_{h(w)}}=1$ and $B_{E_{h(w)}}=1$.  Since this hold for all large primes $p$ and all $w\in W(\FF_p)$, we thus have $N=8n$, $\chi=2n+1$, $\calL=1$, $\gamma=1$ and $B=1$.    Observe that $N$ is even, $\chi$ is odd and $\gamma$ is a square modulo $\ell$.   We have $6B=6\leq 8n=N$.   

The above computations show that $c_{E_{h(w)}}$ is a power of $4$, and hence a square modulo $\ell$, and that
\begin{align*}
\varepsilon_{E_{h(w)}} &=  \legendre{6}{p} \legendre{-3}{p} \legendre{-2}{p} \legendre{-1}{p}^{4n}= 1.
\end{align*}

We have verified all the conditions of \S\ref{S:criterion} and in particular that assumption (\ref{C}) holds.  From Theorem~\ref{T:main criterion}, we deduce that $\Omega(V)$ occurs as the Galois group of a regular extension of $\QQ(t)$ where $V$ is an orthogonal space over $\FF_\ell$ of dimension $N=8n$ with $n\geq1$ and $\det(V)=(\FF_\ell^\times)^2$.

\subsection{\texorpdfstring{$N\equiv 2 \pmod{8}$}{N congruent to 2 mod 8}}
Take any integer $n\geq 1$.   Define the rational function $h(u)=  -(u^2-1)^2/(4u^2)$ and the polynomial  $f(t)=\prod_{i=1}^{4n}(t-h(i+1))$.  Consider the Weierstrass equation
\[
\yy^2=\xx^3-3(t-1)(t-4)f(t)^2 \xx-2(t-1)^2(t+8)f(t)^3;
\]
it has discriminant $\Delta(t)= -2^{6} 3^6 f(t)^6 t^2(t-1)^3$ and the $j$-invariant of the corresponding elliptic curve over $\QQ(t)$ is $-2^{6} (t-4)^3 t^{-2}$.  

Now take notation as in \S\ref{S:criterion}.  Take any prime $p\nmid 6\ell$ such that $f(t)$ modulo $p$ is well-defined and separable, and $f(0)f(1)\not\equiv 0 \pmod{p}$.   Take any $w\in W(\FF_p)$, i.e., any $w\in \FF_p$ such that $w\neq 0$ and  $\Delta(h(w))\neq 0$.  Let $x$ be any closed point of $\PP^1_{\FF_p}=\Spec \FF_p[t] \cup \{\infty\}$ for which $E_{h(w)}/\FF_p(t)$ has bad reduction and let $\kappa_x$ be the Kodaira symbol of $E_{h(w)}/\FF_p(t)$ at $x$.

\begin{itemize}
\item
Suppose $x=\infty$. We have $\kappa_x=\I_1$, so $c_x(E_{h(w)})=1$.   Over $\FF_p(t)_x=\FF_p(\!(t^{-1})\!)$, the elliptic curve $E_{h(w)}$ is isomorphic to the curve defined by the Weierstrass equation $\yy^2=\xx^3-3(1-t^{-1})(1-4t^{-1}) \xx-2(1-t^{-1})^2(1+8t^{-1})$.  Reducing, we have the equation
\[
\yy^2= \xx^3-3\xx-2 = -3(\xx+1)^2+(\xx+1)^3
\]
over $\FF_x$. The curve $E_{h(w)}$ has split multiplicative reduction at $x$ if and only if $-3$ is a square in $\FF_p$.
\item
Suppose $x=0$. We have $\kappa_x=\I_2$, so $c_x(E_{h(w)})=2$.   Reducing the Weierstrass equation for $E_{h(w)}$ over $\FF_x$, we have 
\[
-h(w) \yy^2= \xx^3-12f(0)^2 \xx-16f(0)^3 = -6f(0)(\xx+2f(0))^2+(\xx+2f(0))^3.
\]
So $E_{h(w)}$ has split multiplicative reduction at $x$ if and only if $6h(w)f(0)$ is a square in $\FF_p$.  Since $-h(u)=((u^2-1)/(2u))^2$ and $-h(w)f(0)=-h(w) \prod_{i=1}^{4n}(-h(i+1))$, we deduce that $E_{h(w)}$ has split multiplicative reduction at $x$ if and only if $-6$ is a square in $\FF_p$.
\item
Suppose $x=1$.  We have $\kappa_x=\III$, so $c_x(E_{h(w)})=2$.  
\item
Suppose $x=a$ is a root of $(t-h(w))f(t) \bmod{p} \in \FF_p[t]$.  We have $\kappa_x=\I_0^*$.    Tate's algorithm shows that $c_x(E_{h(w)})=1+m$ where $m$ is the number of roots of 
\[
P(\xx):=\xx^3-3(a-1)(a-4) \xx-2(a-1)^2(a+8)
\]
in $\FF_p$.   The polynomial $P(\xx)$ has root $2(a-1)$, and $P(\xx)/(\xx-2(a-1))$ equals
\[
Q(\xx):=\xx^2 + 2(a - 1)\xx + (a-1)(a+8).
\]
Using that $a=h(b)$ for some $b\in \FF_p$, we find that the discriminant of $Q(\xx)$ is a square (moreover, it equals $3^2(b^2+1)^2/b^2$), so $m=3$.   Therefore, $c_x(E_{h(w)})=4$.
\end{itemize}
Note that the curve $E_{h(w)}$ has multiplicative reduction at a closed point $x\neq\infty$ and Kodaira symbol $\I_0^*$ at more than one closed point $x\neq \infty$.   

From the computations above, we find that $N_{E_{h(w)}}=-4+1+2+1+(4n+1)\cdot 2 =8n+2$, $\chi_{E_{h(w)}} = (2+3+1+6(4n+1))/12= 2n+1$, $\calL_{E_{h(w)}}=1$, $\gamma_{E_{h(w)}}=1$ and $B_{E_{h(w)}}=1$.  Since this hold for all large primes $p$ and all $w\in W(\FF_p)$, we thus have $N=8n+2$, $\chi=2n+1$, $\calL=1$, $\gamma=1$ and $B=1$.    Observe that $N$ is even, $\chi$ is odd and $\gamma$ is a square modulo $\ell$.   We have $6B=6\leq 8n+2=N$.   

The above computations show that $c_{E_{h(w)}}$ is a power of $4$, and hence a square modulo $\ell$, and that 
\begin{align*}
\varepsilon_{E_{h(w)}} &=  \legendre{-3}{p} \legendre{-6}{p} \legendre{-2}{p} \legendre{-1}{p}^{4n+1}= 1.
\end{align*}

We have verified all the conditions of \S\ref{S:criterion} and in particular that assumption (\ref{C}) holds.  From Theorem~\ref{T:main criterion}, we deduce that $\Omega(V)$ occurs as the Galois group of a regular extension of $\QQ(t)$ where $V$ is an orthogonal space over $\FF_\ell$ of dimension $N=8n+2$ with $n\geq 1$ and $\det(V)=(\FF_\ell^\times)^2$.

\subsection{\texorpdfstring{$N\equiv 4 \pmod{8}$}{N congruent to 4 mod 8}}
Take any integer $n\geq 1$.  Define the rational function $h(u)= -3u^2$ and the polynomial  $f(t)=\prod_{i=1}^{4n}(t-h(i))$.   Consider the Weierstrass equation
\[
\yy^2=\xx^3-3(t-1)(t-9)f(t)^2 \xx-2(t-1)(t-3)(t-9)f(t)^3;
\]
it has discriminant $\Delta(t)= -2^{8} 3^3 f(t)^6 t(t-1)^2(t-9)^2$ and the $j$-invariant of the corresponding elliptic curve over $\QQ(t)$ is $-2^4 3^3 (t-1)(t-9) t^{-1}$.  

Now take notation as in \S\ref{S:criterion}.  Take any prime $p\nmid 6\ell$ such that $f(t)$ modulo $p$ is well-defined and separable, and $f(0)f(1)f(9)\not\equiv 0 \pmod{p}$.   Take any $w\in W(\FF_p)$, i.e., any $w\in \FF_p$ for which $\Delta(h(w))\neq 0$.  Let $x$ be any closed point of $\PP^1_{\FF_p}=\Spec \FF_p[t] \cup \{\infty\}$ for which $E_{h(w)}/\FF_p(t)$ has bad reduction and let $\kappa_x$ be the Kodaira symbol of $E_{h(w)}/\FF_p(t)$ at $x$.

\begin{itemize}
\item
Suppose $x=\infty$.  We have $\kappa_x=\I_1$, so $c_x(E_{h(w)})=1$.  Over $\FF_p(t)_x=\FF_p(\!(t^{-1})\!)$, the elliptic curve $E_{h(w)}$ is isomorphic to the curve defined by the Weierstrass equation
\[
\yy^2=\xx^3-3(1-t^{-1})(1-9t^{-1}) \xx-2(1-t^{-1})(1-3t^{-1})(1-9t^{-1}).
\]
Reducing, we have the equation $\yy^2= \xx^3-3\xx-2 = -3(\xx+1)^2+(\xx+1)^3$ over $\FF_x$.    The curve $E_{h(w)}$ has split multiplicative reduction at $x$ if and only if $-3$ is a square in $\FF_p$.
\item
Suppose $x=0$. We have $\kappa_x=\I_1$, so $c_x(E_{h(w)})=1$.   Reducing the Weierstrass equation for $E_{h(w)}$ over $\FF_x$, we have 
\[
-h(w) \yy^2= \xx^3-27f(0)^2 \xx+54f(0)^3 = 9f(0)(\xx-3f(0))^2+(\xx-3f(0))^3.
\]
So $E_{h(w)}$ has split multiplicative reduction at $x$ if and only if $-h(w)f(0)$ is a square in $\FF_p$.  Since $-h(w)f(0)=3 w^2 \prod_{i=1}^{4n}(-3 i^2)=3 (3^{2n} w \prod_{i=1}^{4n} i)^2$, we deduce that $E_{h(w)}$ has split multiplicative reduction at $x$ if and only if $3$ is a square in $\FF_p$.
\item
Suppose $x=1$.  We have $\kappa_x=\II$, so $c_x(E_{h(w)})=1$.
\item
Suppose $x=9$.  We have $\kappa_x= \II$, so $c_x(E_{h(w)})=1$.
\item
Suppose $x=a$ is a root of $(t-h(w))f(t) \bmod{p} \in \FF_p[t]$.  We have $\kappa_x=\I_0^*$.  Tate's algorithm shows that $c_x(E_{h(w)})=1+m$ where $m$ is the number of roots of 
\[
P(\xx):=\xx^3-3(a-1)(a-9)\xx-2(a-1)(a-3)(a-9)
\] 
in $\FF_p$.  Using that $a=h(b)$ for some $b\in \FF_p$, we find that the discriminant of $P(\xx)$ is a square (moreover, it equals $2^{2} 3^4 b(b^2+3)(3b^2 + 1)$ squared), so $m$ equals $0$ or $3$.   Therefore, $c_x(E_{h(w)})$ equals $1$ or $4$.
\end{itemize}
Note that the curve $E_{h(w)}$ has multiplicative reduction at a closed point $x\neq\infty$ and Kodaira symbol $\I_0^*$ at more than one closed point $x\neq \infty$.   

From the computations above, we find that $N_{E_{h(w)}}=-4+1+1+2+2+(4n+1)\cdot 2 =8n+4$, $\chi_{E_{h(w)}} = (1+1+2+2+6(4n+1))/12= 2n+1$, $\calL_{E_{h(w)}}=1$, $\gamma_{E_{h(w)}}=1$ and $B_{E_{h(w)}}=2$.  Since this hold for all large primes $p$ and all $w\in W(\FF_p)$, we thus have $N=8n+4$, $\chi=2n+1$, $\calL=1$, $\gamma=1$ and $B=2$.    Observe that $N$ is even, $\chi$ is odd and $\gamma$ is a square modulo $\ell$.   We have $6B=12\leq 8n+4=N$.   

The above computations show that $c_{E_{h(w)}}$ is a power of $4$, and hence a square modulo $\ell$, and that 
\begin{align*}
\varepsilon_{E_{h(w)}} &=  \legendre{3}{p} \legendre{-3}{p} \legendre{-1}{p}^2 \legendre{-1}{p}^{4n+1}= 1.
\end{align*}

We have verified all the conditions of \S\ref{S:criterion} and in particular that assumption (\ref{C}) holds.  From Theorem~\ref{T:main criterion}, we deduce that $\Omega(V)$ occurs as the Galois group of a regular extension of $\QQ(t)$ where $V$ is an orthogonal space over $\FF_\ell$ of dimension $N=8n+4$ with $n\geq 1$ and $\det(V)=(\FF_\ell^\times)^2$.

\subsection{\texorpdfstring{$N\equiv 6 \pmod{8}$}{N congruent to 6 mod 8}}
Take any integer $n\geq 0$.  Define the rational function $h(u)= (u^2+1)/(2u)$ and the polynomial  $f(t)=\prod_{i=1}^{4n+1}(t-h(i+1))$.  Consider the Weierstrass equation
\begin{align} \label{E:6}
\yy^2 &= (\xx- t(t^2-2)f(t))\cdot (\xx-t(t^2+1)f(t))\cdot (\xx+t(2t^2-1)f(t)) \\
&= \xx^3 -3t^2(t^4 -t^2 +1)f(t)^2\xx + t^3(2t^6 - 3t^4 - 3t^2 + 2)f(t)^3;  \notag
\end{align}
it has discriminant $\Delta(t)=2^4 3^6  t^{10} (t-1)^2(t+1)^2$ and the $j$-invariant of the corresponding elliptic curve over $\QQ(t)$ is $2^8(t^4-t^2+1)^3 t^{-4} (t-1)^{-2}(t+1)^{-2}$.  

Now take notation as in \S\ref{S:criterion}.  Take any prime $p\nmid 6\ell$ such that $f(t)$ modulo $p$ is well-defined and separable, and $f(0)f(1)f(-1)\not\equiv 0 \pmod{p}$.   Take any $w\in W(\FF_p)$, i.e., any $w\in W(\FF_p)$ for which $w\neq 0$ and $\Delta(h(w))\neq 0$.  Let $x$ be any closed point of $\PP^1_{\FF_p} = \Spec \FF_p[t]\cup \{\infty\}$ for which $E_{h(w)}/\FF_p(t)$ has bad reduction at $x$.

\begin{itemize}
\item
Suppose $x=1$.  We have $\kappa_x=\I_2$, so $c_x(E_{h(w)})=2$.   Reducing the equation to $\FF_x$, we have 
\begin{align*}
(1-h(w)) \yy^2 &=(\xx+f(1))^2(\xx-2f(1))\\ &=-3f(1)(\xx+f(1))^2+(\xx+f(1))^3.
\end{align*}
So $E_{h(w)}$ has split multiplicative reduction at $x$ if and only if $-3(1-h(w))f(1)$ is a square in $\FF_p^\times$.
\item
Suppose $x=-1$.   We have $\kappa_x=\I_2$, so $c_x(E_{h(w)})=2$.   Reducing the equation to $\FF_x$, we have 
\begin{align*}
(-1-h(w)) \yy^2 &=(\xx-f(-1))^2(\xx+2f(-1))\\ &=3f(-1)(\xx-f(-1))^2+(\xx-f(-1))^3.
\end{align*}
So $E_{h(w)}$ has split multiplicative reduction at $x$ if and only if $-3(h(w)+1)f(-1)$ is a square in $\FF_p^\times$.
\item
Suppose $x=0$.   We have $\kappa_x=\I_4^*$.  Using that $p$ is odd and $f(0)\neq 0$, we find that $(t-h(w))f(t)$ belongs to $-h(w)f(0)\cdot (\FF_p(\!(t)\!)^\times)^2$.  Using Tate's algorithm, one can then show that $c_x(E_{h(w)})$ is a power of $2$ and equals the Tamagawa number of the elliptic curve 
\[
\yy^2= \xx^3 -3(t^2 -1 +t^{-2}) \xx + (2t^3 - 3t - 3t^{-1} + 2t^{-3})
\]
over $\FF_p(\!(t)\!)$.
\item
Suppose $x=\infty$.  We have $\kappa_x=\I_4^*$.  Using that $p$ is odd and $f(t)$ is monic of odd degree, we find that $(t-h(w))f(t)$ belongs to $(\FF_p(\!(t^{-1})\!)^\times)^2$.  So $c_x(E_{h(w)})$ equals the Tamagawa number of the elliptic curve 
\[
\yy^2= \xx^3 -3(t^2 -1 +t^{-2}) \xx + (2t^3 - 3t - 3t^{-1} + 2t^{-3})
\]
over $\FF_p(\!(t^{-1})\!)$.
\item
Suppose $x=a$ is a root of $(t-h(w))f(t) \bmod{p} \in \FF_p[t]$.  We have $\kappa_x=\I_0^*$.   We have $c_x(E_{h(w)})=4$ since the polynomial of $\xx$ in (\ref{E:6}) splits into linear factors.
\end{itemize}
Note that the curve $E_{h(w)}$ has multiplicative reduction at a closed point $x\neq\infty$ and Kodaira symbol $\I_0^*$ at more than one closed point $x\neq \infty$.   

From the computations above, we find that $N_{E_{h(w)}}=-4+2+2+1+1+(4n+2)\cdot 2 =8n+6$, $\chi_{E_{h(w)}} = (10+10+2+2+6(4n+2))/12= 2n+3$, $\calL_{E_{h(w)}}=1$, $\gamma_{E_{h(w)}}=1$ and $B_{E_{h(w)}}=1$.  Since this hold for all large primes $p$ and all $w\in W(\FF_p)$, we thus have $N=8n+4$, $\chi=2n+1$, $\calL=1$, $\gamma=1$ and $B=1$.    Observe that $N$ is even, $\chi$ is odd and $\gamma$ is a square modulo $\ell$.   We have $6B=6\leq 8n+6=N$.   

Note that the Tamagawa numbers described in the cases $x=0$ and $x=\infty$ are equal, since the given curves are isomorphic with respect to the isomorphism $\FF_p(\!(t)\!)\xrightarrow{\sim} \FF_p(\!(t^{-1})\!)$, $\alpha(t)\mapsto \alpha(t^{-1})$ of base fields.  Therefore, the product $\prod_x c_x(E_{h(w)})$ is a power of $4$ and is thus a square modulo $\ell$.  We also have
\begin{align*}
\varepsilon_{E_{h(w)}} &=  \legendre{-3(1-h(w))f(1)}{p} \legendre{3(-1-h(w))f(-1)}{p} \legendre{-1}{p}^2 \legendre{-1}{p}^{4n+1}\\
&= \legendre{(1-h(w))(-1-h(w))}{p} \prod_{i=1}^{4n+1}\legendre{(1-h(i+1))(-1-h(i+1))}{p} =1
\end{align*}
where the last equality uses that $(1-h(u))(-1-h(u))= (u-1)^2(u+1)^2/(2u)^2$.

We have verified all the conditions of \S\ref{S:criterion} and in particular that assumption (\ref{C}) holds.  From Theorem~\ref{T:main criterion}, we deduce that $\Omega(V)$ occurs as the Galois group of a regular extension of $\QQ(t)$ where $V$ is an orthogonal space over $\FF_\ell$ of dimension $N=8n+6$ with $n\geq 0$ and $\det(V)=(\FF_\ell^\times)^2$.

\section{\texorpdfstring{Proof of Theorem~\ref{T:Omega total}($iv$)}{Proof of Theorem~\ref{T:Omega total}(iv)}} \label{SS:numerics even 2}

Take any prime $\ell\geq 5$ such that $2$, $3$, $5$ or $7$ is a non-square modulo $\ell$.

For an even integer $N\geq 6$, let $V$ be an orthogonal space of dimension $N$ over $\FF_\ell$ satisfying $\det(V)\neq (\FF_\ell^\times)^2$.  We use the criterion of \S\ref{S:criterion} to show that $\Omega(V)$, and hence $\POmega(V)$, occurs as the Galois group of a regular extension of $\QQ(t)$.   We proved the case with dimension $N$ and $\det(V)=(\FF_\ell^\times)^2$ in the pervious section.

\subsection{Case 1}  \emph{Suppose that $2$ is not a square modulo $\ell$.} 

Take any integer $n\geq 2$.   Define the rational function $h(u)= 1/(2u^2 + 1)$ and the polynomial  $f(t)=\prod_{i=1}^{n-1}(t-g(i))$ where $g(u)=1/(u^2+1)$.  Consider the Weierstrass equation
\[
\yy^2=\xx^3+3(t-1)(t-4)(3t-4)f(t)^2 \xx-4(t-1)^2(9t^2-32t+32)f(t)^3    
\]
it has discriminant $\Delta(t)= -2^{6} 3^6 f(t)^6 t^4(t-1)^3(t-2)^2$ and the $j$-invariant of the corresponding elliptic curve over $\QQ(t)$ is $1728(t-4)^3(t-4/3)^3(t-2)^{-2} t^{-4}$.  

Now take notation as in \S\ref{S:criterion}.  Take any prime $p\nmid 6\ell$ such that $f(t)$ modulo $p$ is well-defined and separable, and $f(0)f(1)f(2)\not\equiv 0 \pmod{p}$.   Take any $w\in W(\FF_p)$, i.e., any $w\in \FF_p$ for which $2w^2+1\neq 0$ and $\Delta(h(w))\neq 0$.  Let $x$ be any closed point of $\PP^1_{\FF_p}=\Spec \FF_p[t] \cup \{\infty\}$ for which $E_{h(w)}/\FF_p(t)$ has bad reduction and let $\kappa_x$ be the Kodaira symbol of $E_{h(w)}/\FF_p(t)$ at $x$.  

\begin{itemize}
\item
Suppose $x=\infty$. The symbol $\kappa_x$ is either $\III$ or $\III^*$.
\item
Suppose $x=1$.  We have $\kappa_x=\III$.
\item
Suppose $x=0$.  We have $\kappa_x=\I_4$.    Reducing the Weierstrass equation for $E_{h(w)}$ to an equation over $\FF_x$, we have 
\[
-h(w) \yy^2 = \xx^3-48 f(0)^2 \xx-128f(0)^3=-12f(0)(\xx+4f(0))^2+(\xx+4f(0))^3.
\]
So $E_{h(w)}$ has split multiplicative reduction at $x$ if and only if $3h(w)f(0)$ is a square in $\FF_p$. 
\item
Suppose $x=2$.  We have $\kappa_x=\I_2$.   Reducing the Weierstrass equation for $E_{h(w)}$ to an equation over $\FF_x$, we have 
\[
(1-h(w)) \yy^2 = \xx^3-12 f(1)^2 \xx-16f(1)^3=-6f(1)(\xx+2f(1))^2+(\xx+2f(1))^3.
\]
So $E_{h(w)}$ has split multiplicative reduction at $x$ if and only if $-6(1-h(w))f(1)$ is a square in $\FF_p$. 
\item
Suppose $x=a$ is an irreducible factor of $(t-h(w))f(t) \bmod{p} \in \FF_p[t]$.   We have $\kappa_x=\I_0^*$.
\end{itemize}
Note that the curve $E_{h(w)}$ has multiplicative reduction at a closed point $x\neq\infty$ and Kodaira symbol $\I_0^*$ at more than one closed point $x\neq \infty$.   

From the computations above, we find that $N_{E_{h(w)}}=-4+2+2+1+1+n\cdot 2 =2n+2$, $\calL_{E_{h(w)}}=1$, $\gamma_{E_{h(w)}}=2$ and $B_{E_{h(w)}}=1$.  Since this hold for all large primes $p$ and all $w\in W(\FF_p)$, we thus have $N=2n+2$, $\calL=1$, $\gamma=2$ and $B=1$.    By our assumption on $\ell$, $\gamma=2$ is a non-square modulo $\ell$.   We have $6B=6\leq 2n+2=N$.   

From our computations, we have
\begin{align*}
\varepsilon(E_{h(w)}) &=  \legendre{-2}{p}^2 \legendre{3h(w)f(0)}{p} \legendre{-6(1-h(w))f(1)}{p} \legendre{-1}{p}^{n}\\
&= \legendre{2h(w)(1-h(w))}{p} \prod_{i=1}^{n-1}\legendre{g(i)(1-g(i))}{p}  = 1,
\end{align*}
where the last equality uses that $2h(u)(1-h(u))= (2u)^2/(2u^2+1)^2$ and $g(u)(1-g(u))=u^2/(u^2+1)^2$.  

We have verified all the conditions of \S\ref{S:criterion} and in particular that assumption (\ref{B}) holds.  From Theorem~\ref{T:main criterion}, we deduce that $\Omega(V)$ occurs as the Galois group of a regular extension of $\QQ(t)$ where $V$ is an orthogonal space over $\FF_\ell$ of dimension $N=2n+2$ with $n\geq 2$ and $\det(V)\neq (\FF_\ell^\times)^2$.

\subsection{Case 2}  \emph{Suppose that $3$ is not a square modulo $\ell$ and that $2$ is a square modulo $\ell$.}

Take any integer $n\geq 2$.  Define the rational function $h(u)= 1/(2u^2 + 1)$ and the polynomial  $f(t)=\prod_{i=1}^{n-1}(t-g(i))$ where $g(u)=1/(u^2+1)$.  Consider the Weierstrass equation
\begin{align*} 
\yy^2=\xx^3-&3(28t - 1)(147t^2 + 112t - 16)f(t)^2\xx \\-&2(28t - 1)(21609t^3 - 3430t^2 + 1568t - 64)f(t)^3; \notag
\end{align*}
it has discriminant $\Delta(t)= 2^{12} 3^6 7^9 f(t)^6 t^4(t-1)^3(t-1/28)^2$ and the $j$-invariant of the corresponding elliptic curve over $\QQ(t)$ is $2^6 7^{-6}(t -1/28)(147t^2 + 112t - 16)^3  t^{-4}(t-1)^{-3}$.    

Now take notation as in \S\ref{S:criterion}.  Take any prime $p\nmid 42\ell$ such that $f(t)$ modulo $p$ is well-defined and separable, and $f(0)f(1)f(1/28)\not\equiv 0 \pmod{p}$.   Take any $w\in W(\FF_p)$, i.e., any $w\in \FF_p$ for which $2w^2+1\neq 0$ and $\Delta(h(w))\neq 0$.  Let $x$ be any closed point of $\PP^1_{\FF_p}=\Spec \FF_p[t] \cup \{\infty\}$ for which $E_{h(w)}/\FF_p(t)$ has bad reduction and let $\kappa_x$ be the Kodaira symbol of $E_{h(w)}/\FF_p(t)$ at $x$.

\begin{itemize}
\item
Suppose $x=\infty$. The symbol $\kappa_x$ is $\III$ or $\III^*$.
\item
Suppose $x=1/28$.  We have $\kappa_x=\II$.
\item
Suppose $x=0$.  We have $\kappa_x=\I_4$. Reducing the Weierstrass equation for $E_{h(w)}$ to an equation over $\FF_x$, we have 
\[
-h(w) \yy^2 = \xx^3-48 f(0)^2 \xx-128f(0)^3=-12f(0)(\xx+4f(0))^2+(\xx+4f(0))^3.
\]
So $E_{h(w)}$ has split multiplicative reduction at $x$ if and only if $3h(w)f(0)$ is a square in $\FF_p$. 
\item
Suppose $x=1$.  We have $\kappa_x=\I_3$.   Reducing the Weierstrass equation for $E_{h(w)}$ to an equation over $\FF_x$, we have 
\begin{align*}
(1-h(w)) \yy^2 &= \xx^3-19683f(1)^2 \xx-1062882 f(1)^3\\
&=-243f(1)(\xx+81f(1))^2+(\xx+81f(1))^3.
\end{align*}
So $E_{h(w)}$ has split multiplicative reduction at $x$ if and only if $-3(1-h(w))f(1)$ is a square in $\FF_p$. 
\item
Suppose $x=a$ is a root of $(t-h(w))f(t) \bmod{p} \in \FF_p[t]$.  We have $\kappa_x=\I_0^*$.
\end{itemize}
Note that the curve $E_{h(w)}$ has multiplicative reduction at a closed point $x\neq\infty$ and Kodaira symbol $\I_0^*$ at more than one closed point $x\neq \infty$.   

From the computations above, we find that $N_{E_{h(w)}}=-4+2+2+1+1+n\cdot 2  =2n+2$, $\calL_{E_{h(w)}}=1$, $\gamma_{E_{h(w)}}=6$ and $B_{E_{h(w)}}=1$.  Since this hold for all large primes $p$ and all $w\in W(\FF_p)$, we thus have $N=2n+2$, $\calL=1$, $\gamma=6$ and $B=1$.    By our assumptions on $\ell$, $\gamma=6$ is a non-square modulo $\ell$.   We have $6B=6\leq 2n+2=N$.   

From our computations, we have
\begin{align*}
\varepsilon_{E_{h(w)}} &=  \legendre{-1}{p} \legendre{-2}{p} \legendre{3h(w)f(0)}{p} \legendre{-3(1-h(w))f(1)}{p} \legendre{-1}{p}^{n}\\
&= \legendre{2h(w)(1-h(w))}{p} \prod_{i=1}^{n-1}\legendre{g(i)(1-g(i))}{p}  = 1
\end{align*}
where the last equality uses that $2h(u)(1-h(u))= (2u)^2/(2u^2+1)^2$ and $g(u)(1-g(u))=u^2/(u^2+1)^2$.  

We have verified all the conditions of \S\ref{S:criterion} and in particular that assumption (\ref{B}) holds.  From Theorem~\ref{T:main criterion}, we deduce that $\Omega(V)$ occurs as the Galois group of a regular extension of $\QQ(t)$ where $V$ is an orthogonal space over $\FF_\ell$ of dimension $N=2n+2$ with $n\geq 2$ and $\det(V)\neq (\FF_\ell^\times)^2$.

\subsection{Case 3}   \emph{Suppose that $5$ is not a square modulo $\ell$ and that $2$ and $3$ are squares modulo $\ell$}.

Take any integer $n\geq 2$.  Let $h(u)$ be the rational function $15/(-u^2+15)$ if $n$ is even and $5/(-u^2+5)$ if $n$ is odd.   Define the polynomial  $f(t)=\prod_{i=1}^{n-1}(t-g(i))$ where $g(u)=1/(u^2+1)$.  Consider the Weierstrass equation
\begin{align*} 
\yy^2=\xx^3+&3(135t^2 + 96t - 256)f(t)^2 \xx\\ -&2(486t^4 + 621t^3 - 3024t^2 - 2304t + 4096)f(t)^3; \notag
\end{align*}
it has discriminant $\Delta(t)= -2^{8} 3^{13} f(t)^6 t^5(t-1)(t+16/9)^2$ and the $j$-invariant of the corresponding elliptic curve over $\QQ(t)$ is $2^{4} 3^2 5^3(t+16/9)(t-16/15)^3 t^{-5}(t-1)^{-1}$.  

Now take notation as in \S\ref{S:criterion}.  Take any prime $p\nmid 30\ell$ such that $f(t)$ modulo $p$ is well-defined and separable, and $f(0)f(1)f(16/9)\not\equiv 0 \pmod{p}$.   Take any $w\in W(\FF_p)$, i.e., any $w\in \FF_p$ for which $-w^2+15\neq 0$ if $n$ is even, $-w^2+5\neq 0$ if $n$ is odd, and $\Delta(h(w))\neq 0$.   Let $x$ be any closed point of $\PP^1_{\FF_p}=\Spec \FF_p[t] \cup \{\infty\}$ for which $E_{h(w)}/\FF_p(t)$ has bad reduction and let $\kappa_x$ be the Kodaira symbol of $E_{h(w)}/\FF_p(t)$ at $x$.

\begin{itemize}
\item
Suppose $x=\infty$.   The symbol $\kappa_x$ is $\IV$ or $\II^*$ when $n$ is even or odd, respectively.  
\item
Suppose $x=0$.  We have $\kappa_x=\I_5$.   Reducing the Weierstrass equation for $E_{h(w)}$ to an equation over $\FF_x$, we have 
\[
-h(w) \yy^2 = \xx^3-768 f(0)^2 \xx-8192f(0)^3= -48f(0)(\xx+16f(0))^2+(\xx+16f(0))^3.
\]
So $E_{h(w)}$ has split multiplicative reduction at $x$ if and only if $3h(w)f(0)$ is a square in $\FF_p$. 
\item
Suppose $x=1$.  We have $\kappa_x=\I_1$.  Reducing the Weierstrass equation for $E_{h(w)}$ to an equation over $\FF_x$, we have 
\begin{align*}
(1-h(w)) \yy^2 &= \xx^3-75f(1)^2 \xx+250 f(1)^3=15f(1)(\xx-5f(1))^2+(\xx-5f(1))^3.
\end{align*}
So $E_{h(w)}$ has split multiplicative reduction at $x$ if and only if $15(1-h(w))f(1)$ is a square in $\FF_p$. 
\item
Suppose $x=-16/9$.  We  have $\kappa_x=\II$.
\item
Suppose $x=a$ is a root of $(t-h(w))f(t) \bmod{p} \in \FF_p[t]$.   We have $\kappa_x=\I_0^*$.
\end{itemize}
Note that the curve $E_{h(w)}$ has multiplicative reduction at a closed point $x\neq\infty$ and Kodaira symbol $\I_0^*$ at more than one closed point $x\neq \infty$.   

From the computations above, we find that $N_{E_{h(w)}}=-4+2+2+1+1+n\cdot 2  =2n+2$, $\calL_{E_{h(w)}}=5$ and $B_{E_{h(w)}}=1$.  Also $\gamma_{E_{h(w)}}$ is equal to $15$ or $5$ if $n$ is even or odd, respectively.   Since this hold for all large primes $p$ and all $w\in W(\FF_p)$, we thus have $N=2n+2$, $\calL=5$, $\gamma \in \{5,15\}$ and $B=1$.    By our assumptions on $\ell$, $\ell\nmid \calL$ and $\gamma$ is a non-square modulo $\ell$.  We have $6B=6\leq 2n+2=N$.   

If $n$ is even, then 
\begin{align*}
\varepsilon_{E_{h(w)}} &=  \legendre{-3}{p} \legendre{3h(w)f(0)}{p} \legendre{15(1-h(w))f(1)}{p} \legendre{-1}{p} \legendre{-1}{p}^{n}\\
&= \legendre{-15h(w)(1-h(w))}{p} \prod_{i=1}^{n-1}\legendre{g(i)(1-g(i))}{p}  = 1,
\end{align*}
where the last equality uses that $-15h(u)(1-h(u))= 15^2 u^2/(u^2-15)^2$ and $g(u)(1-g(u))=u^2/(u^2+1)^2$.

If $n$ is odd, then 
\begin{align*}
\varepsilon_{E_{h(w)}} &=  \legendre{-1}{p} \legendre{3h(w)f(0)}{p} \legendre{15(1-h(w))f(1)}{p} \legendre{-1}{p} \legendre{-1}{p}^{n}\\
&= \legendre{-5h(w)(1-h(w))}{p} \prod_{i=1}^{n-1}\legendre{g(i)(1-g(i))}{p}  = 1,
\end{align*}
where the last equality uses that $-5h(u)(1-h(u))= 5^2 u^2/(u^2-5)^2$ and $g(u)(1-g(u))=u^2/(u^2+1)^2$.

We have verified all the conditions of \S\ref{S:criterion} and in particular that assumption (\ref{B}) holds.  From Theorem~\ref{T:main criterion}, we deduce that $\Omega(V)$ occurs as the Galois group of a regular extension of $\QQ(t)$ where $V$ is an orthogonal space over $\FF_\ell$ of dimension $N=2n+2$ with $n\geq 2$ and $\det(V)\neq (\FF_\ell^\times)^2$.
\
\subsection{Case 4}   \emph{Suppose that $7$ is not a square modulo $\ell$ and that $2$, $3$ and $5$ are squares modulo $\ell$.}

Take any integer $n\geq 2$.  Let $h(u)$ be the rational function $63/4 \cdot (u^2 - 14)^{-1}$ if $n$ is even and $189/4\cdot (u^2 - 42)^{-1}$ if $n$ is odd.    Define the polynomial  $f(t)=\prod_{i=1}^{n-1}(t-g(i))$ where $g(u)=-9/8\cdot  (u^2+1)^{-1}$.  Consider the Weierstrass equation
\begin{align*} 
\yy^2=\xx^3-&12(9t+4)^2(14t^3+42t^2+36t+9)f(t)^2 \xx\\ -&24(9t+4)^3(8t^5+87t^4+222t^3+234t^2+108t+18)f(t)^3; \notag
\end{align*}
it has discriminant $\Delta(t)= -2^{10} 3^3 t^7 (8t+9)^2 (9t+4)^7$ and the $j$-invariant of the corresponding elliptic curve over $\QQ(t)$ is $-2^8 3^3 (14t^3+42t^2+36t+9)^3 t^{-7}(8t+9)^{-2}(9t+4)^{-1}$.   

Now take notation as in \S\ref{S:criterion}.  Take any prime $p\nmid 42\ell$ such that $f(t)$ modulo $p$ is well-defined and separable, and $f(0)f(-4/9)f(-9/8)\not\equiv 0 \pmod{p}$.   Take any $w\in W(\FF_p)$, i.e., any $w\in \FF_p$ for which $w^2-14\neq 0$ if $n$ is even, $w^2-42\neq 0$ if $n$ is odd, and $\Delta(h(w))\neq 0$.   Let $x$ be any closed point of $\PP^1_{\FF_p}=\Spec \FF_p[t] \cup \{\infty\}$ for which $E_{h(w)}/\FF_p(t)$ has bad reduction and let $\kappa_x$ be the Kodaira symbol of $E_{h(w)}/\FF_p(t)$ at $x$.

\begin{itemize}
\item
Suppose $x=\infty$.   The symbol $\kappa_x$ is  $\II$ or $\IV^*$ when $n$ is even or odd, respectively.  
\item
Suppose $x=-4/9$.  We have $\kappa=\I_1^*$.
\item
Suppose $x=0$.  We have $\kappa_x=\I_7$.   Reducing the Weierstrass equation for $E_{h(w)}$ to an equation over $\FF_x$, we have 
\[
-h(w) \yy^2 = \xx^3-1728 f(0)^2 \xx-27648f(0)^3= -72f(0)(\xx+24f(0))^2+(\xx+24f(0))^3.
\]
So $E_{h(w)}$ has split multiplicative reduction at $x$ if and only if $2h(w)f(0)$ is a square in $\FF_p$. 
\item
Suppose $x=-9/8$.  We have $\kappa=\I_2$.  Reducing the Weierstrass equation for $E_{h(w)}$ to an equation over $\FF_x$, we have 
\begin{align*}
(-9/8-h(w)) \yy^2 &= \xx^3-3176523/4096 \cdot f(-9/8)^2 \xx+1089547389/131072\cdot f(-9/8)^3\\
&=3087/64 f(-9/8) (\xx-1029/64\cdot f(-9/8))^2+(\xx-1029/64\cdot f(-9/8))^3.
\end{align*}
So $E_{h(w)}$ has split multiplicative reduction at $x$ if and only if $7(-9/8-h(w))f(-9/8)$ is a square in $\FF_p$. 
\item
Suppose $x=a$ is a root of $(t-h(w))f(t) \bmod{p} \in \FF_p[t]$.  We have $\kappa_x=\I_0^*$.
\end{itemize}
Note that the curve $E_{h(w)}$ has multiplicative reduction at a closed point $x\neq\infty$ and Kodaira symbol $\I_0^*$ at more than one closed point $x\neq \infty$.   

From the computations above, we find that $N_{E_{h(w)}}=-4+2+2+1+1+n\cdot 2  =2n+2$, $\calL_{E_{h(w)}}=7$ and $B_{E_{h(w)}}=1$.    Also $\gamma_{E_{h(w)}}$ is equal to $2\cdot 7$ or $6\cdot 7$ if $n$ is even or odd, respectively.     Since this hold for all large primes $p$ and all $w\in W(\FF_p)$, we thus have $N=2n+2$, $\calL=7$, $\gamma \in \{2\cdot 7,6\cdot 7\}$ and $B=1$.    By our assumptions on $\ell$, $\ell\nmid \calL$ and $\gamma$ is a non-square modulo $\ell$.  We have $6B=6\leq 2n+2=N$.   

If $n$ is even, then
\begin{align*}
\varepsilon_{E_{h(w)}} &=  \legendre{-1}{p}^2 \legendre{2h(w)f(0)}{p} \legendre{7(-9/8-h(w))f(-9/8)}{p} \legendre{-1}{p}^{n}\\
&= \legendre{14h(w)(9/8+h(w))}{p} \prod_{i=1}^{n-1}\legendre{g(i)(-9/8-g(i))}{p}  = 1,
\end{align*}
where the last equality uses that $14h(u)(9/8+h(u))= 3^4 7^2 4^{-2} u^2 (u^2-14)^{-2}$ and $g(u)(-9/8-g(u))=9^2 8^{-2} u^2(u^2+1)^{-2}$.

If $n$ is odd, then
\begin{align*}
\varepsilon_{E_{h(w)}} &= \legendre{-3}{p} \legendre{-1}{p} \legendre{2h(w)f(0)}{p} \legendre{7(-9/8-h(w))f(-9/8)}{p}  \legendre{-1}{p}^{n}\\
&= \legendre{42h(w)(9/8+h(w))}{p} \prod_{i=1}^{n-1}\legendre{g(i)(-9/8-g(i))}{p}  = 1,
\end{align*}
where the last equality uses that $42h(u)(9/8+h(u))= 3^6 7^2 4^{-2} u^2 (u^2-42)^{-2}$ and $g(u)(-9/8-g(u))=9^2 8^{-2} u^2(u^2+1)^{-2}$.

We have verified all the conditions of \S\ref{S:criterion} and in particular that assumption (\ref{B}) holds.  From Theorem~\ref{T:main criterion}, we deduce that $\Omega(V)$ occurs as the Galois group of a regular extension of $\QQ(t)$ where $V$ is an orthogonal space over $\FF_\ell$ of dimension $N=2n+2$ with $n\geq 2$ and $\det(V)\neq (\FF_\ell^\times)^2$.

\section{Proof of Theorem~\ref{T:Omega total} for \texorpdfstring{$n=5$}{n=5}}  \label{S:N=5}

Fix a prime $\ell\geq 5$.   Define the set $S=\{2,3,\ell\}$ and the ring $R=\ZZ[S^{-1}]$.  We use the construction of \S\ref{S:twists} with the Weierstrass equation
\[
\yy^2 = \xx^3+ 3(t^2-1)^3 \xx -2(t^2-1)^5
\]
being used for (\ref{E:initial WE}); its discriminant is $\Delta(t)=-2^6 3^3  t^2 (t-1)^9(t+1)^9$.  Define the $R$-scheme $M=\Spec R[u, \Delta(u)^{-1}]$.

Take any $m\in M(k)$, where $k$ is a finite field whose characteristic is not in $S$.   Let $E_m$ be the elliptic curve over $k(t)$ defined by the Weierstrass equation
\[
(t-m) \yy^2= \xx^3+ 3(t^2-1)^3 \xx -2(t^2-1)^5.
\]
The Kodaira symbol of $E_m$ at $t=0$, $1$, $-1$ and $m$ is $\I_2$, $\III^*$, $\III^*$ and $\I_0^*$, respectively.  The Kodaira symbol of $E_m$ at $\infty$ is $\II^*$.   Therefore, Lemma~\ref{L:independence of p} holds (without increasing $S$) and we have $\Phi=\{\I_2, \III^*, \III^*, \I_0^*, \II^*\}$, $N=5$, $\chi=3$, $\calL=1$, $\gamma=1$ and $B=2$.    In particular, $\ell \nmid 6\calL$.

By Proposition~\ref{P:main construction} (an examination of the proof shows that one does not need to increase $S$), there is an orthogonal space $V_\ell$ of dimension $5$ over $\FF_\ell$ and a continuous representation
\[
\theta_\ell \colon \pi_1(M) \to \Or(V_\ell)
\]
such that $\det(I-\theta_\ell(\Frob_m)T)\equiv L(T/q,E_m) \pmod{\ell}$ for any $m\in M(k)$, where $k$ is any finite field of order $q$ whose characteristic is not in $S$.  \\

Take $h(u)=(-u^2 + 3)/(u^2 + 3)$. As noted in Remark~\ref{R:foreshadow computations}, all the conditions of the criterion of \S\ref{S:criterion} hold \emph{except} for the following two:
\begin{itemize}
\item
For all sufficiently large primes $p$ and all $w\in W(\FF_p)$, $E_{h(w)}$ has Kodaira symbol $\I_0^*$ at more that one closed point of $\Aff^1_{\FF_p}$.
\item 
$6B \leq N$.
\end{itemize}

In the proof of Theorem~\ref{T:main criterion}, these two conditions are only used to prove that $\theta_\ell(\pi_1(M_{\Qbar})) \supseteq \Omega(V_\ell)$.  So to prove that $\Omega_5(\ell)$ occurs as the Galois group of a regular extension of $\QQ(t)$, it suffices to prove the following lemma.

\begin{lemma} \label{L:big monodromy 5}
The group $\theta_\ell(\pi_1(M_\Qbar))$ contains $\Omega(V_\ell)$ and is not a subgroup of $\SO(V_\ell)$.
\end{lemma}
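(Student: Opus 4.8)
The plan is to follow the reflection-group strategy of \S\ref{SS:Hall sketch}; by Remark~\ref{R:foreshadow} everything there through Lemma~\ref{L:G gen} applies to our example, even though the extra hypotheses of \S\ref{SS:big monodromy Hall} fail (cf.\ Remark~\ref{R:foreshadow computations}). Write $G:=\theta_\ell(\pi_1(M_\Qbar))$. The roots of $\Delta(u)=-2^6 3^3 u^2(u-1)^9(u+1)^9$ are $0,1,-1$, so $M_\Qbar=\PP^1_\Qbar\setminus\{0,1,-1,\infty\}$; by Lemma~\ref{L:tame and irreducible}, $G$ acts irreducibly on the $5$-dimensional orthogonal space $V_\ell$ and is topologically generated by $\theta_\ell(\sigma_0),\theta_\ell(\sigma_1),\theta_\ell(\sigma_{-1})$. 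Since the curve $E_1$ used for (\ref{E:initial WE}) has Kodaira symbol $\I_2$ at $t=0$ and $\III^*$ at $t=\pm1$, Lemma~\ref{L:G gen} shows that $r_0:=\theta_\ell(\sigma_0)$ is a reflection and $\Drop(\theta_\ell(\sigma_c))\le2$ for every $c$. As $\det r_0=-1$, this already gives $G\not\subseteq\SO(V_\ell)$, so all that remains is to prove $G\supseteq\Omega(V_\ell)$. First I would also record the precise form of the other generators: using the middle-convolution description $\calG\cong\MC_{-1}(\calE_{-1}[\ell])$ from \S\ref{SS:Hall sketch} (the case $d=1$, $g=-1$), together with the local-monodromy formula for middle convolution (Katz, Dettweiler--Reiter) and the fact that $\calE_{-1}[\ell]\cong\calE_1[\ell]$ has order-$4$ local monodromy with primitive fourth-root eigenvalues at $t=\pm1$ (type $\III^*$), one finds that $\theta_\ell(\sigma_1)$ and $\theta_\ell(\sigma_{-1})$ are semisimple of order $4$ with characteristic polynomial $(T-1)^3(T^2+1)$.

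The heart of the argument is to show that the normal subgroup $\calR\trianglelefteq G$ generated by all reflections in $G$ acts irreducibly --- indeed primitively --- on $V_\ell$. Suppose not. Then $V_\ell$ has a proper system of imprimitivity blocks for $\calR$; since $\dim V_\ell=5$ is prime these are five lines, so in an orthogonal basis $e_1,\dots,e_5$ adapted to them, $\calR$ lies in the diagonal subgroup $(\ZZ/2\ZZ)^5$ of $\Or(V_\ell)$ (using $\disc(V_\ell)=(\FF_\ell^\times)^2$, as $\gamma=1$). As this system of imprimitivity is unique, $G=N_G(\calR)$ also preserves it, so $G$ lies in the monomial (signed-permutation) group $W=(\ZZ/2\ZZ)^5\rtimes S_5$; the $G$-conjugates of $r_0$ are then the five sign changes $r_{e_i}$, so $\calR=(\ZZ/2\ZZ)^5$ and every reflection in $G$ is diagonal. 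Writing $G=(\ZZ/2\ZZ)^5\rtimes H$ with $H\le S_5$ transitive, a transposition in $H$ would lift to a non-diagonal reflection in $G$, so $H$ is transposition-free and hence lies in $\{C_5,D_5,F_{20},A_5\}$. But $\theta_\ell(\sigma_1)\in G$ has order $4$ with characteristic polynomial $(T-1)^3(T^2+1)$, and an element of $W$ with that characteristic polynomial has permutation part a single transposition; so $H$ contains a transposition --- a contradiction. Hence $\calR$ acts irreducibly and primitively on $V_\ell$.

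It remains to identify $\calR$. It is a finite primitive irreducible subgroup of $\GL(V_\ell)\cong\GL_5(\FF_\ell)$ generated by reflections and preserving the quadratic form on $V_\ell$; by the classification of finite irreducible linear groups generated by reflections, $\calR$ either contains $\Omega(V_\ell)$ --- whence $G\supseteq\calR\supseteq\Omega(V_\ell)$ and we are done --- or is one of finitely many groups of order bounded by an absolute constant (such as the reductions modulo $\ell$ of the five-dimensional primitive complex reflection groups $W(A_5)=S_6$ and $G_{33}$, together with a handful of extra possibilities for small $\ell$). To exclude the latter, note that such a $\calR$ is absolutely irreducible of bounded order, so its normalizer in $\Or(V_\ell)$ has order bounded independently of $\ell$; since conjugation by any element of $\theta_\ell(\pi_1(M))$ sends reflections to reflections, $\theta_\ell(\pi_1(M))$ normalizes $\calR$, so $|\theta_\ell(\pi_1(M))|$ is bounded --- contradicting the existence of a Frobenius element of large order. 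Concretely, by part (a) of Proposition~\ref{P:main construction} the characteristic polynomial of $\theta_\ell(\Frob_1)$ over $\FF_5$ is $L(T/5,E_1)\bmod\ell$, and since $L(T,E_1)=1-2T+T^2-5T^3+2\cdot5^3T^4-5^5T^5$ is not, up to scaling, a product of cyclotomic polynomials, this element has order exceeding any fixed bound for all $\ell$ outside an explicit finite set; the remaining small primes $\ell$ are dealt with by a direct (machine) computation. This proves $G\supseteq\Omega(V_\ell)$.

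I expect the main obstacle to be the middle part of this plan: computing the local monodromies $\theta_\ell(\sigma_{\pm1})$ sharply enough (through middle convolution) to force the transposition in the imprimitive case, and excluding the small reflection groups --- where, Frobenius traces being unavailable for the geometric group $G$, one must resort to the bounded-normalizer argument together with a residual finite check for small $\ell$.
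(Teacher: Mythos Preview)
Your overall architecture matches the paper's: pass to the reflection subgroup $\calR\trianglelefteq\theta_\ell(\pi_1(M_{\Qbar}))$, prove it acts irreducibly, invoke the Zalesski{\u\i}--Sere{\v{z}}kin classification, and eliminate the small candidates via order bounds coming from Frobenius elements. Where you diverge is the irreducibility step. The paper argues that if $\calR$ is reducible then $V_\ell=\bigoplus_{i=1}^5 W_i$ with the \emph{arithmetic} image $\theta_\ell(\pi_1(M))$ permuting the lines, so every element of that image has order dividing some $e\in\{8,10,12\}$, immediately contradicting Lemma~\ref{L:Lfunction orders}. You instead compute the geometric local monodromy $\theta_\ell(\sigma_{\pm1})$ via middle-convolution formulas and derive the contradiction inside the signed-permutation group. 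That is a legitimate alternative and more purely geometric, but the local-monodromy recipe you invoke is not in the paper; you would need to cite and apply the Katz/Dettweiler--Reiter formula precisely.

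There is, however, a genuine gap. Your assertion ``$\calR$ lies in the diagonal subgroup $(\ZZ/2\ZZ)^5$'' is valid only when $\calR$ fixes each of the five lines, i.e.\ in the \emph{reducible} case. If $\calR$ is irreducible but imprimitive --- that is, $\calR\cong G_\ell(2,2,5)=W(D_5)$ or $G_\ell(2,1,5)=W(B_5)$ --- then $\calR$ itself permutes the lines transitively (its projection to $S_5$ is all of $S_5$), and your ``transposition-free $H$'' argument gives no contradiction. You then omit exactly these two groups from your final list (``primitive complex reflection groups $W(A_5)=S_6$ and $G_{33}$''), so they are never eliminated. The paper treats all four small candidates $G_\ell(2,2,5)$, $G_\ell(2,1,5)$, $W_\ell(A_5)$, $W_\ell(K_5)$ uniformly via the explicit exponent set $\{16,20,24,28,36\}$ in Lemma~\ref{L:Lfunction orders}; your bounded-normalizer argument would work too once you add the two imprimitive groups back in. (Minor point: $m=1\notin M(\FF_5)$ since $\Delta(1)=0$; the paper's computation uses $m=2$.)
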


We will give a proof of Lemma~\ref{L:big monodromy 5} in \S\ref{SS:proof of big monodromy 5}.   To rule out some of the possible small images of $\theta_\ell$, we will use the following computational result.

\begin{lemma} \label{L:Lfunction orders}
There is an element $g\in \theta_\ell(\pi_1(M))$ such $g^e\neq I$ for all $e\in \{16,20,24,28,36\}$.
\end{lemma}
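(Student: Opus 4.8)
The plan is to exhibit a single Frobenius element whose $L$-function, reduced modulo $\ell$, has a root of multiplicative order large enough to force $g=\theta_\ell(\Frob_m)$ to have order not dividing any of $16,20,24,28,36$. Concretely, I would pick a convenient small prime $p\notin S$ and a specific $m\in M(\FF_p)$, compute the polynomial $L(T,E_m)\in\ZZ[T]$ of degree $5$ directly from the Weierstrass equation $(t-m)\yy^2=\xx^3+3(t^2-1)^3\xx-2(t^2-1)^5$ over $\FF_p(t)$ (for instance by counting points on the fibers of the associated elliptic surface, or via the functional equation together with the first few coefficients $a_x$), and then set $g:=\theta_\ell(\Frob_m)$, so that by Proposition~\ref{P:main construction}(\ref{P:main construction a}) we have $\det(I-gT)\equiv L(T/q,E_m)\pmod{\ell}$.

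The key point is that the multiset of eigenvalues of $g$ (in $\bbar{\FF}_\ell$) is the multiset of roots of $\det(I-gT)$, i.e.\ of the reciprocal polynomial of $L(T/q,E_m)\bmod\ell$. If some eigenvalue $\zeta$ of $g$ has multiplicative order $d$ in $\bbar{\FF}_\ell^\times$, then $g^e=I$ forces $d\mid e$; so it suffices to produce $p$, $m$ for which the characteristic polynomial $\det(I-gT)\bmod\ell$ has a root whose order $d$ divides none of $16,20,24,28,36$. Since $\ell$ ranges over all primes $\geq5$ this needs to be done uniformly in $\ell$: I would choose $p$ and $m$ so that the integer polynomial $T^5L(T^{-1}/p,E_m)$ (the reciprocal of $L(T/p,E_m)$, which is $\pm$ itself by the functional equation of Theorem~\ref{T:L-function fundamentals}) is, say, \emph{irreducible over $\QQ$} with a root that is not a root of unity of one of the forbidden orders — e.g.\ arrange that its splitting field has a Galois group, or a factorization pattern, incompatible with all roots being $16$th, $20$th, $24$th, $28$th or $36$th roots of unity modulo every $\ell$. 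A clean way: exhibit $p,m$ with $L(T/p,E_m)$ having a root $\alpha\in\bbar\QQ$ of degree $5$ over $\QQ$ whose reduction mod any prime above $\ell$ has order divisible by $5$ (because $5\nmid16,20,24,28,36$? — note $5\mid20$, so this particular trick needs care), or more robustly: choose $p,m$ so that for \emph{every} $\ell\geq5$ the reduction has an eigenvalue of order divisible by $11$ or $7$ or by some prime dividing none of the listed numbers, by controlling the Newton polygon / the value $L(1/p,E_m)$ and the trace.

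The cleanest concrete route, and the one I would actually carry out, is computational: run over small $p\notin\{2,3\}$ and $m\in M(\FF_p)$, compute $L(T,E_m)\in\ZZ[T]$, and search for one example whose degree-$5$ integer polynomial $P(T)=T^5L(1/(Tp),E_m)\cdot(\text{leading adjustment})$ has the property that for \emph{all} primes $\ell\geq5$, $P\bmod\ell$ has a root of order $\notin\{d: d\mid e,\ e\in\{16,20,24,28,36\}\}$. A sufficient condition that can be checked with finitely much work: $P$ is separable and not a product of cyclotomic polynomials $\Phi_d$ with $d$ dividing one of the forbidden $e$'s, \emph{and} its reduction mod the finitely many ``bad'' primes $\ell$ (those dividing $\disc(P)$ or a resultant with the relevant $\Phi_d$) still has such a root — the generic case being handled once and for all by the factorization type of $P$ over $\QQ$, and the finitely many exceptional $\ell$ by direct inspection. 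The main obstacle is precisely this uniformity in $\ell$: a naive choice works for almost all $\ell$ but may fail for a sparse set of primes dividing some discriminant or resultant, so I expect the real work to be selecting an $m$ (the paper's example $\FF_5,m=1$ gives $L(T,E_m)=1-2T+T^2-5T^3+2\cdot5^3T^4-5^5T^5$, already a good candidate to test) whose $L$-polynomial is ``generic enough'' that no prime $\ell\geq5$ can collapse all its roots into the forbidden orders, and then verifying the handful of small $\ell$ by hand or \texttt{Magma}.
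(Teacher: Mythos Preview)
Your strategy is exactly the paper's: pick a specific $p\notin S$ and $m\in M(\FF_p)$, compute the integer polynomial $L(T/p,E_m)$, set $g=\theta_\ell(\Frob_m)$, and verify $g^e\neq I$ for each forbidden $e$, handling the finitely many exceptional $\ell$ with a second choice of $(p,m)$.

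The paper's execution is more direct than the eigenvalue-order/resultant route you sketch, and avoids the vagueness of your ``generic enough'' criterion. Let $A\in\GL_5(\ZZ[1/p])$ be the companion matrix of the monic polynomial $-L(T/p,E_m)$, so $A\bmod\ell$ has the same characteristic polynomial as $-g$. Since each $e\in\{16,20,24,28,36\}$ is even, $A^e\bmod\ell$ and $g^e$ then have the same characteristic polynomial; hence $g^e=I$ forces $\ell$ to divide every coefficient of the single polynomial $\det(TI-A^e)-(T-1)^5\in\ZZ[1/p][T]$. For each $e$ this is one gcd computation over $\ZZ$, giving the exact finite set of bad primes---no cyclotomic resultants or splitting-field analysis needed.

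Two small points. First, your suggested test case $m=1$ over $\FF_5$ is not in $M(\FF_5)$ since $\Delta(1)=0$; the paper uses $p=5$, $m=2$, with
\[
L(T/5,E_2)=-T^5+\tfrac{2}{5}T^4-\tfrac{1}{25}T^3+\tfrac{1}{25}T^2-\tfrac{2}{5}T+1,
\]
and the only failure is $(\ell,e)=(17,36)$. Second, a single Frobenius indeed does not cover every $\ell\geq 5$, confirming your expectation: the paper disposes of $\ell\in\{5,17\}$ with the second choice $p=7$, $m=3$.
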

\begin{proof}
First suppose that $\ell\neq 5$.  Take $m:=2$ in $M(\FF_5)$; we have an elliptic curve $E_2$ defined over $\FF_5(t)$.   One can compute that
\[
L(T/5,E_2)=-T^5 + 2/5\cdot T^4 - 1/25\cdot T^3 + 1/25\cdot T^2 - 2/5\cdot T + 1.
\]
One approach is to use the power series definition to compute the terms up to degree $5$ (less terms are required if you use the functional equation); we have verified this $L$-function  with \texttt{Magma}'s function \texttt{LFunction} \cite{Magma}.   Since $5\notin S$, we have $\det(I-\theta_\ell(\Frob_m) T) \equiv L(T/5,E_2) \pmod{\ell}$.  Let $A\in \GL_{5}(\ZZ[5^{-1}])$ be the companion matrix for the monic polynomial $-L(T/5,E_2)$.  The matrix $A$ modulo $\ell$ in $\GL_2(\FF_\ell)$ has the same characteristic polynomial as $-\theta_\ell(\Frob_m)$.

Take any $e\in\{16,20,24,28,36\}$.  Therefore, $A^e$ modulo $\ell$ and $\theta_\ell(\Frob_p)^e$ have the same characteristic polynomial.  In particular, if $\theta_\ell(\Frob_m)^e = I$, then all the coefficients of the polynomial $\det(TI-A^e)-(T-1)^5 \in \ZZ[1/5]$ are divisible by $\ell$.    An easy computer computation shows that this only happens when $\ell=17$ and $e=36$ (we are using that $\ell> 5$).    This shows that the lemma holds with $g=\theta_\ell(\Frob_m)$ when $\ell\notin\{5,17\}$.

A similar computation starting with $m=3$ in $M(\FF_7)$ can be used to prove the lemma for the excluded primes $\ell\in \{5,17\}$.  Similarly, we take $A \in \GL_5(\ZZ[7^{-1}])$ to be the companion matrix of 
\[
L(T/7,E_3)=1 - 33/49\cdot T^3 - 33/49\cdot T^2 + T^5; 
\]
you do not need to change the sign since the polynomial is monic.  
\end{proof}

\subsection{Proof of Lemma~\ref{L:big monodromy 5}} \label{SS:proof of big monodromy 5}
Define the groups $G:=\theta_\ell(\pi_1(M))$ and $G^g:=\theta_\ell(\pi_1(M_\Qbar))$.      

Using the work of Hall, as outlined in \S\ref{SS:Hall sketch}, we find that the group $G^g$ contains acts irreducibly on $V_\ell$ (by Lemma~\ref{L:tame and irreducible}) and contains a reflection (by Lemma~\ref{L:G gen}(\ref{L:monodromy c a}));  see Remark~\ref{R:foreshadow}.\\

Let $\calR$ be the group generated by all the reflections in $G^g$; it is a normal subgroup of both $G^g$ and $G$.  The group $\calR$ is non-trivial since $G^g$ contains a reflection.

\begin{lemma} \label{L:3.2 5}
Let $W$ be an irreducible $\calR$-submodule of $V_\ell$ and let $H$ be the subgroup of $G$ that stabilizes $W$.  The subspaces $\{gW\}_{g\in G/H}$ are pairwise orthogonal and we have $V_\ell=\oplus_{g\in G/H} \, g W$.
\end{lemma}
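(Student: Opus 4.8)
The plan is a Clifford-theory argument adapted to the orthogonal structure, leaning on the presence of reflections in $\calR$ at two key points. Throughout I will use that the pairing on $V_\ell$ is $G$-invariant (since $G\subseteq\Or(V_\ell)$), that $G^g$ acts irreducibly on $V_\ell$ (Lemma~\ref{L:tame and irreducible}), and that $\calR$ is a nontrivial normal subgroup of both $G$ and $G^g$ (nontrivial because $G^g$ contains a reflection).

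First I would record two preliminary observations. Since $\calR\trianglelefteq G^g$, the invariant subspace $V_\ell^\calR$ is $G^g$-stable, hence is $0$ or $V_\ell$; as $\calR\ne\{I\}$ it must be $0$. Next, every irreducible $\calR$-submodule $W_0$ of $V_\ell$ is nondegenerate: $W_0\cap W_0^\perp$ is $\calR$-stable, so it is $0$ or $W_0$, and if it were $W_0$ then $W_0$ would be totally isotropic; but any reflection $r_v\in\calR$ preserves $W_0$, so $\ang{x}{v}\,v = x-r_v(x)\in W_0$ for every $x\in W_0$, which forces either $v\in W_0$ (impossible, since then $\ang{v}{v}=0$ contradicts $r_v$ being a reflection) or $v\perp W_0$ (so $r_v$ fixes $W_0$ pointwise); thus in the isotropic case $\calR$ acts trivially on $W_0$ and $W_0\subseteq V_\ell^\calR=0$, a contradiction.

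The spanning statement is then the easy half: since $\calR$ is normal, $gW$ is an irreducible $\calR$-submodule for every $g\in G$, so $\sum_{g\in G^g}gW$ is a nonzero $G^g$-submodule of $V_\ell$ and hence equals $V_\ell$; a fortiori $\sum_{gH\in G/H}gW=V_\ell$. The main obstacle is the orthogonality of distinct translates $g_1W\ne g_2W$, which (translating by $g_1^{-1}$ and using $G$-invariance of the pairing) reduces to showing $W\perp W'$ whenever $W':=hW\ne W$ with $h\in G$; note $W'$ is again irreducible, hence nondegenerate by the previous paragraph. The idea is that the reflections in $\calR$ are too rigid to cross between $W$ and $W'$: since $\calR$ preserves $W$, the computation $\ang{x}{v}\,v=x-r_v(x)\in W$ shows that any reflection $r_v\in\calR$ with $v\not\perp W$ must have $v\in W$, and likewise $v\not\perp W'$ forces $v\in W'$. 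If some such $v$ satisfied both $v\not\perp W$ and $v\not\perp W'$, then $v\in W\cap W'$, a nonzero $\calR$-submodule of $W$, giving $W=W'$ --- contrary to assumption. Hence every reflection $r_v\in\calR$ with $v\not\perp W$ has $v\in W$ and $v\perp W'$. Finally, the linear span in $W$ of these vectors $v$ is $\calR$-stable, and any $x\in W$ orthogonal to all of them is fixed by every reflection in $\calR$, hence lies in $V_\ell^\calR=0$; so the span is all of $W$, and therefore $W'\perp W$.

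Directness of the sum then follows formally: the translates $\{gW\}_{gH\in G/H}$ are pairwise orthogonal and span $V_\ell$, so any vector lying in $gW$ and in the span of the remaining translates is orthogonal to every translate, hence to all of $V_\ell$, hence zero by nondegeneracy of the form; thus $V_\ell=\bigoplus_{gH\in G/H}gW$. I expect the orthogonality step to be where care is required --- in particular it must rule out a configuration of two distinct but $\calR$-isomorphic translates that fail to be orthogonal (a ``multiplicity $\geq 2$'' phenomenon), which is exactly what the rigidity ``$v\not\perp W\Rightarrow v\in W$'' for reflections in $\calR$ forbids.
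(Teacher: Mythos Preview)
Your argument is correct. The paper itself does not write out a proof but simply observes that Hall's Lemma~3.2 in \cite{MR2372151} (stated there for $G^g$ in place of $G$) carries over verbatim once one notes that $\calR$ is normal in $G$; what you have done is reconstruct that Clifford-theoretic argument in full, exploiting at the key steps the fact that $\calR$ is generated by reflections (so that $v\not\perp W$ forces $v\in W$, and totally isotropic $\calR$-submodules would be fixed by $\calR$). One cosmetic remark: your final step showing the span $S$ of the relevant reflection vectors equals $W$ can be shortened by simply invoking irreducibility of $W$ once you have checked $S$ is a nonzero $\calR$-submodule, rather than going through $S^\perp\cap W=0$; both routes are valid.
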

\begin{proof}
The proof of Lemma~3.2 of \cite{MR2372151} carries over verbatim (in the lemma of loc.~cit.~the role of $G$ is played by $G^g$).  Note that the subspace $gW$ is a $\calR$-module for all $g\in G$ since $\calR$ is a normal subgroup of $G$.   
\end{proof}

\begin{lemma} \label{L:irreducible 5}
The group $\calR$ acts irreducibly on $V_\ell$.
\end{lemma}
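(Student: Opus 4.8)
The plan is to argue by contradiction, using the Clifford-theoretic input of Lemma~\ref{L:3.2 5} together with the order computation of Lemma~\ref{L:Lfunction orders}. Suppose $\calR$ acts reducibly on $V_\ell$ and choose a simple $\FF_\ell[\calR]$-submodule $W$ of $V_\ell$. Since $\calR$ is normal in $G$, Lemma~\ref{L:3.2 5} applies: writing $H$ for the stabiliser of $W$ in $G$, the translates $\{gW\}_{g\in G/H}$ are pairwise orthogonal and $V_\ell=\bigoplus_{g\in G/H} gW$, so in particular $\dim_{\FF_\ell} W$ divides $\dim_{\FF_\ell} V_\ell = 5$. If $\dim W = 5$ then $W = V_\ell$ is $\calR$-simple, contradicting reducibility; hence $\dim W = 1$ and $[G:H]=5$, and we obtain an orthogonal decomposition $V_\ell = L_1\oplus\cdots\oplus L_5$ into lines that $G$ permutes transitively (each $L_i$ being a $G$-translate of $W$). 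Each $L_i$ is non-degenerate because the decomposition is orthogonal and $V_\ell$ is non-degenerate.

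First I would pin down $\calR$ as exactly the group of diagonal sign matrices for this decomposition. Because $\calR$ is normal in $G$ and $W$ is $\calR$-stable, each $L_i$ is an $\calR$-submodule, so $\calR$ acts diagonally with respect to a basis $(e_1,\dots,e_5)$ with $e_i\in L_i$; as $\calR\subseteq\Or(V_\ell)$ and each $L_i$ is non-degenerate, every element of $\calR$ has all diagonal entries in $\{\pm 1\}$. A reflection that is such a diagonal matrix has fixed space of codimension one, hence negates exactly one $e_i$; that is, it equals $r_i := \mathrm{diag}(1,\dots,-1,\dots,1)$ with the $-1$ in slot $i$. As already noted in the surrounding argument (Lemma~\ref{L:G gen}(\ref{L:monodromy c a})), the group $G^g$ contains a reflection, which therefore lies in $\calR$ and equals some $r_{i_0}$; conjugating $r_{i_0}$ by elements of $G$ that realise the transitive action on $\{L_1,\dots,L_5\}$, and using normality of $\calR$ in $G$, shows $r_j\in\calR$ for every $j$. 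Hence $\calR=\langle r_1,\dots,r_5\rangle$ is the full group of diagonal $\pm 1$ matrices, of order $2^5$.

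To finish, I would derive a contradiction from this with Lemma~\ref{L:Lfunction orders}. The common eigenlines of $\calR$ are precisely $L_1,\dots,L_5$, so any element of $\Or(V_\ell)$ normalising $\calR$ permutes these lines; since $G$ normalises $\calR$, every element of $G$ is a signed permutation matrix with respect to $(e_1,\dots,e_5)$, i.e. $G$ embeds into $\{\pm 1\}^5\rtimes\mathfrak{S}_5$. Now the order of any element of $\{\pm 1\}^5\rtimes\mathfrak{S}_5$ lies in $\{1,2,3,4,5,6,8,10,12\}$: the image of such an element in $\mathfrak{S}_5$ has order $d\in\{1,2,3,4,5,6\}$, and a sign twist at most doubles the order. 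Every one of these values divides one of $16,20,24,28,36$, so $g^e=I$ for some $e\in\{16,20,24,28,36\}$ for every $g\in G=\theta_\ell(\pi_1(M))$. This contradicts Lemma~\ref{L:Lfunction orders}, and we conclude that $\calR$ acts irreducibly on $V_\ell$.

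The main obstacle is not any single computation but the structural reduction in the second step — establishing that in the imprimitive case $\calR$ must be the entire $2$-group of diagonal sign matrices, so that $G$ is forced into the small group $\{\pm1\}^5\rtimes\mathfrak{S}_5$; the element-order bound and the appeal to Lemma~\ref{L:Lfunction orders}, which was manufactured precisely for this purpose, are then routine. A minor point to be careful about is that one must record that the lines $L_i$ are non-degenerate (immediate from the orthogonality of the decomposition and non-degeneracy of $V_\ell$) before concluding that the scalars by which $\calR$ acts on them are $\pm1$.
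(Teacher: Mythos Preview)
Your proof is correct and follows essentially the same route as the paper: assume reducibility, use Lemma~\ref{L:3.2 5} to get an orthogonal decomposition into five $G$-permuted lines, bound the order of every $g\in G$ by $2\cdot\mathrm{ord}(\sigma)\in\{2,4,6,8,10,12\}$, and contradict Lemma~\ref{L:Lfunction orders}. The paper's argument is slightly more streamlined in that it skips your middle paragraph entirely---it never identifies $\calR$ with the full sign group, but simply notes that for each $g$ some $g^e$ with $e\in\{4,5,6\}$ stabilises every line and hence $g^{2e}=I$; your detour through $\calR=\{\pm1\}^5$ is correct but unnecessary.
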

\begin{proof}
Suppose that $V_\ell$ is not an irreducible $\calR$-module and hence there is a proper irreducible $\calR$-submodule $W$ of $V_\ell$.   By Lemma~\ref{L:3.2 5}, we have $V_\ell = \oplus_{g\in G/H}\, gW$, where $H$ is the subgroup of $G$ that stabilizes $W$.   The vector space $V_\ell$ has dimension $N=5$, so $5=[G:H] \dim_{\FF_\ell} W$.  Since $W$ is a proper subspace of $V_\ell$, we deduce that $W$, and hence all the $gW$, have dimension $1$ over $\FF_\ell$.    We thus an orthogonal sum $V_\ell=\oplus_{i=1}^5 W_i$ with the group $G$ permuting the subspaces $W_i$.

Take any $g\in G$.  Since $g$ permutes the spaces $W_1,\ldots, W_5$, there is an integer $e\in \{4, 5, 6\}$ such that $g^e$ stabilizes each $W_i$.  The automorphism $g^e \in \Or(V_\ell)$ acts on $W_i$ and preserves the induced pairing on it.   We have $\Or(W_i) = \{\pm 1\}$ since $W_i$ has dimension $1$, so $g^{2e}$ acts trivially on each $W_i$.  Therefore, for any $g\in G$ we have $g^{e}=I$ for some $e\in \{8,10,12\}$.    However, this contradicts Lemma~\ref{L:Lfunction orders}.   Therefore, $V_\ell$ is an irreducible $\calR$-module.
\end{proof}

Since $\calR$ is generated by reflections and $V_\ell$ is an irreducible $\calR$-module by Lemma~\ref{L:irreducible 5}, we may use the classification of irreducible reflection groups as described by Zalesski{\u\i} and Sere{\v{z}}kin in \cite{MR603578}.   

Assume that $\Omega(V_\ell)$ is not a subgroup of $\calR$.  The classification in \cite{MR603578} shows that $\calR$ can be obtained by the reduction modulo $\ell$ of a finite irreducible reflection group of degree $5$ in characteristic $0$ and that the action of $\calR$ on $V_\ell$ is absolutely irreducible.

From Lemma~3.7 of \cite{MR2372151} and the classification in \cite{MR603578}, we find that $\calR$ is conjugate in $\GL(V_\ell)\cong \GL_5(\FF_\ell)$ to one of the groups denoted in \cite{MR603578} by $G_\ell(2,2,5)$, $G_\ell(2,1,5)$, $W_\ell(A_5)$ or $W_\ell(K_5)$.   

\begin{itemize}
\item
The group $G_\ell(2,2,5)$ is the subgroup of $\GL_5(\FF_\ell)$ generated by the permutation matrices and the diagonal matrix whose diagonal is $(-1,-1,1,1,1)$; it is isomorphic to the Weyl group of $W(D_5)$. 
\item
The group $G_\ell(2,1,5)$ is the subgroup of $\GL_5(\FF_\ell)$ generated by $G_\ell(2,2,5)$ and the matrix $-I$; it is isomorphic to the Weyl group of $W(C_5)$.    
\item
The group $W_\ell(A_5)$ is isomorphic to the symmetric group $\mathfrak{S}_6$ when $\ell\neq 7$.
\item
The group $W_\ell(A_5)$ is isomorphic to the symmetric group $\mathfrak{S}_7$ when $\ell=7$.
\item
The group $W_\ell(K_5)$ is isomorphic to $\{\pm 1\} \times \Omega_5(3)$.
\end{itemize}

A group theory computation shows that for any of the possibilities for $\calR$ given above, the outer automorphism group $\operatorname{Out}(G)$ of $\calR$ has cardinality at most $2$.   One can also verify that for any $r\in \calR$, there is an integer $e\in\{ 8, 10, 12, 14, 18\}$ such $r^e=I$.

Let $\calN$ be the normalizer of $\calR$ in $\Or(V_\ell)$.    Take any $g\in \calN$; conjugation by $g$ defines an automorphism of $\calR$.   Since $\#\operatorname{Out}(\calR)\leq 2$, there is an $r\in \calR$ such that $g^2 r^{-1}$ commutes with $\calR$.  Since $V_\ell$ is an absolutely irreducible $\calR$-module, $g^2 r^{-1}$ must be a scalar matrix.   Therefore, $g^2=\pm r$ since the only scalar matrices in $\Or(V_\ell)$ are $I$ and $-I$.   So $g^{2e}=I$ for some $e\in\{8, 10, 12, 14,18\}$.    We have $G\subseteq \calN$ since $\calR$ is a normal subgroup of $G$.   So for each $g\in G$, we have $g^{e} = I$ for some $e\in \{16,20,24,28,36\}$.  However, this contradicts Lemma~\ref{L:Lfunction orders}.   

Therefore, $\calR$ contains $\Omega(V_\ell)$.  We have $\calR \not\subseteq \SO(V_\ell)$ since reflections have determinant $-1$.   The lemma follows since $G^g\supseteq \calR$.
\\

%
%
%
%
%


\begin{bibdiv}
\begin{biblist}

\bib{Magma}{article}{
      author={Bosma, W.},
      author={Cannon, J.},
      author={Playoust, C.},
       title={The {M}agma algebra system. {I}. {T}he user language},
        date={1997},
     journal={J. Symbolic Comput.},
      volume={24},
      number={3-4},
       pages={235\ndash 265},
        note={Computational algebra and number theory (London, 1993)},
}

\bib{MR1045822}{book}{
      author={Bosch, S.},
      author={L{\"u}tkebohmert, W.},
      author={Raynaud, M.},
       title={N\'eron models},
      series={Ergebnisse der Mathematik und ihrer Grenzgebiete (3)},
   publisher={Springer-Verlag},
     address={Berlin},
        date={1990},
      volume={21},
        ISBN={3-540-50587-3},
}

\bib{MR2183392}{article}{
      author={Conrad, B.},
      author={Conrad, K.},
      author={Helfgott, H.},
       title={Root numbers and ranks in positive characteristic},
        date={2005},
        ISSN={0001-8708},
     journal={Adv. Math.},
      volume={198},
      number={2},
       pages={684\ndash 731},
}

\bib{Atlas}{book}{
      author={Conway, J.~H.},
      author={Curtis, R.~T.},
      author={Norton, S.~P.},
      author={Parker, R.~A.},
      author={Wilson, R.~A.},
       title={Atlas of finite groups},
   publisher={Oxford University Press},
     address={Eynsham},
        date={1985},
        note={Maximal subgroups and ordinary characters for simple groups, With
  computational assistance from J. G. Thackray},
}

\bib{Conrad-Zass}{inproceedings}{
      author={Conrad, B.},
       title={Reductive group schemes},
        date={2014},
   booktitle={Autour des sch\'emas en groupes, I, to appear in Panoramas et Synth\`eses (2014), Soci\'et\'e Math\'ematique de France},
}

\bib{MR0337993}{incollection}{
      author={Deligne, P.},
      author={Rapoport, M.},
       title={Les sch\'emas de modules de courbes elliptiques},
        date={1973},
   booktitle={Modular functions of one variable, {II} ({P}roc. {I}nternat.
  {S}ummer {S}chool, {U}niv. {A}ntwerp, {A}ntwerp, 1972)},
   publisher={Springer},
     address={Berlin},
       pages={143\ndash 316. Lecture Notes in Math., Vol. 349},
}

\bib{MR1800679}{article}{
      author={Dieulefait, L.},
      author={Vila, N.},
       title={Projective linear groups as {G}alois groups over {${\bf Q}$} via
  modular representations},
        date={2000},
     journal={J. Symbolic Comput.},
      volume={30},
      number={6},
       pages={799\ndash 810},
        note={Algorithmic methods in Galois theory},
}

\bib{MR2282374}{article}{
      author={Dettweiler, M.},
      author={Wewers, S.},
       title={Variation of local systems and parabolic cohomology},
        date={2006},
     journal={Israel J. Math.},
      volume={156},
       pages={157\ndash 185},
}

\bib{MR817266}{inproceedings}{
      author={Feit, W.},
      author={Fong, P.},
       title={Rational rigidity of {$G_2(p)$} for any prime {$p>5$}},
        date={1985},
   booktitle={Proceedings of the {R}utgers group theory year, 1983--1984 ({N}ew
  {B}runswick, {N}.{J}., 1983--1984)},
   publisher={Cambridge Univ. Press},
     address={Cambridge},
       pages={323\ndash 326},
}

\bib{guralnick2012rational}{article}{
      author={Guralnick, R.},
      author={Malle, Gunter},
       title={Rational rigidity for ${E}_8 (p)$},
        date={2012},
     journal={preprint arXiv:1207.1464},
}

\bib{MR2882691}{incollection}{
      author={Gross, B.~H.},
       title={Lectures on the conjecture of {B}irch and {S}winnerton-{D}yer},
        date={2011},
   booktitle={Arithmetic of {$L$}-functions},
      series={IAS/Park City Math. Ser.},
      volume={18},
   publisher={Amer. Math. Soc.},
     address={Providence, RI},
       pages={169\ndash 209},
}

\bib{MR1157316}{article}{
      author={H{\"a}fner, F.},
       title={Einige orthogonale und symplektische {G}ruppen als
  {G}aloisgruppen \"uber {${\bf Q}$}},
        date={1992},
     journal={Math. Ann.},
      volume={292},
      number={4},
       pages={587\ndash 618},
}

\bib{MR2372151}{article}{
      author={Hall, C.},
       title={Big symplectic or orthogonal monodromy modulo {$\ell$}},
        date={2008},
     journal={Duke Math. J.},
      volume={141},
      number={1},
       pages={179\ndash 203},
}

\bib{MR1129371}{article}{
      author={Herfurtner, S.},
       title={Elliptic surfaces with four singular fibres},
        date={1991},
     journal={Math. Ann.},
      volume={291},
      number={2},
       pages={319\ndash 342},
}

\bib{MR629128}{incollection}{
      author={Laumon, G.},
       title={Semi-continuit\'e du conducteur de {S}wan (d'apr\`es {P}.
  {D}eligne)},
        date={1981},
   booktitle={The {E}uler-{P}oincar\'e characteristic ({F}rench)},
      series={Ast\'erisque},
      volume={83},
   publisher={Soc. Math. France, Paris},
       pages={173\ndash 219},
}

\bib{MR1199685}{article}{
      author={Malle, G.},
       title={Polynome mit {G}aloisgruppen {${\rm PGL}_2(p)$} und {${\rm
  PSL}_2(p)$} \"uber {${\bf Q}(t)$}},
        date={1993},
     journal={Comm. Algebra},
      volume={21},
      number={2},
       pages={511\ndash 526},
}

\bib{MR972820}{article}{
      author={Mestre, J.-F.},
       title={Courbes hyperelliptiques \`a multiplications r\'eelles},
        date={1988},
     journal={C. R. Acad. Sci. Paris S\'er. I Math.},
      volume={307},
      number={13},
       pages={721\ndash 724},
}

\bib{MR2261462}{book}{
      author={Milne, J.~S.},
       title={Arithmetic duality theorems},
     edition={Second},
   publisher={BookSurge, LLC, Charleston, SC},
        date={2006},
}

\bib{MR0414558}{article}{
      author={Milne, J.~S.},
       title={On a conjecture of {A}rtin and {T}ate},
        date={1975},
     journal={Ann. of Math. (2)},
      volume={102},
      number={3},
       pages={517\ndash 533},
}

\bib{MR559531}{book}{
      author={Milne, J.~S.},
       title={\'{E}tale cohomology},
      series={Princeton Mathematical Series},
   publisher={Princeton University Press},
     address={Princeton, N.J.},
        date={1980},
      volume={33},
}

\bib{MR1711577}{book}{
      author={Malle, G.},
      author={Matzat, B.~H.},
       title={Inverse {G}alois theory},
      series={Springer Monographs in Mathematics},
   publisher={Springer-Verlag},
     address={Berlin},
        date={1999},
}

\bib{MR1695795}{article}{
      author={Reiter, S.},
       title={Galoisrealisierungen klassischer {G}ruppen},
        date={1999},
        ISSN={0075-4102},
     journal={J. Reine Angew. Math.},
      volume={511},
       pages={193\ndash 236},

}

\bib{MR0419358}{article}{
      author={Ribet, K.~A.},
       title={On {$l$}-adic representations attached to modular forms},
        date={1975},
     journal={Invent. Math.},
      volume={28},
       pages={245\ndash 275},
}

\bib{MR1352266}{incollection}{
      author={Reverter, A.},
      author={Vila, N.},
       title={Some projective linear groups over finite fields as {G}alois
  groups over {${\bf Q}$}},
        date={1995},
   booktitle={Recent developments in the inverse {G}alois problem ({S}eattle,
  {WA}, 1993)},
      series={Contemp. Math.},
      volume={186},
   publisher={Amer. Math. Soc.},
     address={Providence, RI},
       pages={51\ndash 63},
}

\bib{MR0263823}{book}{
      author={Serre, J-P.},
       title={Abelian {$l$}-adic representations and elliptic curves},
      series={McGill University lecture notes written with the collaboration of
  Willem Kuyk and John Labute},
   publisher={W. A. Benjamin, Inc., New York-Amsterdam},
        date={1968},
}

\bib{MR1979499}{article}{
      author={Shiina, T.},
       title={Rigid braid orbits related to {${\rm PSL}_2(p^2)$} and some
  simple groups},
        date={2003},
     journal={Tohoku Math. J. (2)},
      volume={55},
      number={2},
       pages={271\ndash 282},
}

\bib{MR2059760}{incollection}{
      author={Shiina, T.},
       title={Regular {G}alois realizations of {${\rm PSL}_2(p^2)$} over
  {$\mathbb{Q}(T)$}},
        date={2004},
   booktitle={Galois theory and modular forms},
      series={Dev. Math.},
      volume={11},
   publisher={Kluwer Acad. Publ.},
     address={Boston, MA},
       pages={125\ndash 142},
}

\bib{MR0332725}{article}{
      author={Shih, K.},
       title={On the construction of {G}alois extensions of function fields and
  number fields},
        date={1974},
     journal={Math. Ann.},
      volume={207},
       pages={99\ndash 120},
}

\bib{MR1312368}{book}{
      author={Silverman, J.~H.},
       title={Advanced topics in the arithmetic of elliptic curves},
      series={Graduate Texts in Mathematics},
   publisher={Springer-Verlag},
     address={New York},
        date={1994},
      volume={151},
}

\bib{Tate1964-1966}{inproceedings}{
      author={Tate, J.},
       title={On the conjectures of {B}irch and {S}winnerton-{D}yer and a
  geometric analog},
        date={1966},
   booktitle={S\'eminaire {B}ourbaki, {V}ol.\ 9},
       pages={415\ndash 440},
}

\bib{MR0393039}{incollection}{
      author={Tate, J.},
       title={Algorithm for determining the type of a singular fiber in an
  elliptic pencil},
        date={1975},
   booktitle={Modular functions of one variable, {IV} ({P}roc. {I}nternat.
  {S}ummer {S}chool, {U}niv. {A}ntwerp, {A}ntwerp, 1972)},
   publisher={Springer},
     address={Berlin},
       pages={33\ndash 52. Lecture Notes in Math., Vol. 476},
}

\bib{MR817265}{inproceedings}{
      author={Thompson, J.~G.},
       title={Rational rigidity of {$G_2(5)$}},
        date={1985},
   booktitle={Proceedings of the {R}utgers group theory year, 1983--1984 ({N}ew
  {B}runswick, {N}.{J}., 1983--1984)},
   publisher={Cambridge Univ. Press, Cambridge},
       pages={321\ndash 322},
}

\bib{MR2562037}{book}{
      author={Wilson, R.~A.},
       title={The finite simple groups},
      series={Graduate Texts in Mathematics},
   publisher={Springer-Verlag London Ltd.},
     address={London},
        date={2009},
      volume={251},
}

\bib{MR0167533}{article}{
      author={Wohlfahrt, K.},
       title={An extension of {F}. {K}lein's level concept},
        date={1964},
     journal={Illinois J. Math.},
      volume={8},
       pages={529\ndash 535},
}

\bib{Yun:2013}{article}{
   author={Yun, Z.},
   title={Motives with exceptional Galois groups and the inverse Galois
   problem},
   journal={Invent. Math.},
   volume={196},
   date={2014},
   number={2},
   pages={267--337},
}

\bib{MR0148760}{article}{
      author={Zassenhaus, H.},
       title={On the spinor norm},
        date={1962},
     journal={Arch. Math.},
      volume={13},
       pages={434\ndash 451},
}

\bib{MR603578}{article}{
      author={Zalesski{\u\i}, A.~E.},
      author={Sere{\v{z}}kin, V.~N.},
       title={Finite linear groups generated by reflections},
        date={1980},
     journal={Izv. Akad. Nauk SSSR Ser. Mat.},
      volume={44},
      number={6},
       pages={1279\ndash 1307, 38},
}

\bib{Zywina-PSL2}{article}{
      author={Zywina, D.},
       title={The inverse {G}alois problem for
  $\operatorname{PSL}_2(\mathbb{F}_p)$},
        date={2013},
	journal={preprint arXiv:1303.3646},
}

\end{biblist}
\end{bibdiv}

\end{document}